\newcommand{\C}{\mathbb C}
\newcommand{\Q}{\mathbb Q}
\newcommand{\R}{\mathbb R}
\newcommand{\Z}{\mathbb Z}
\newcommand{\mult}{\operatorname{mult}}
\newcommand{\sign}{\operatorname{sign}}
\newcommand{\odd}{\operatorname{oddity}}
\newcommand{\ord}{\operatorname{ord}}
\newcommand{\im}{\operatorname{Im}}
\newcommand{\rk}{\operatorname{rk}}
\newcommand{\abs}[1]{\lvert#1\rvert}
\newcommand{\SL}{\operatorname{SL}}
\newcommand{\Sp}{\operatorname{Sp}}
\newcommand{\Co}{\operatorname{Co}}
\newcommand{\cD}{\mathcal D}
\newcommand{\PP}{\mathbb P}
\newcommand{\Orth}{\mathop{\null\mathrm {O}}\nolimits}
\newcommand{\HH}{\mathbb H}
\newcommand{\cH}{\mathcal H}
\newcommand{\Grit}{\operatorname{G}}
\newcommand{\latt}[1]{{\langle{#1}\rangle}}
\newcommand{\II}{I\! I}
\newcommand{\legendre}[2]{\left(\frac{#1}{#2}\right)}
\numberwithin{equation}{section}
\newtheorem{thm}{Theorem}[section]
\newtheorem{prp}[thm]{Proposition}
\newtheorem{cor}[thm]{Corollary}
\newtheorem{lem}[thm]{Lemma}
\newtheorem*{thm*}{Theorem}
\newtheorem*{cor*}{Corollary}
\newtheorem*{conjecture*}{Conjecture}
\theoremstyle{definition}
\newtheorem{definition}[thm]{Definition}
\newtheorem*{definition*}{Definition}
\theoremstyle{remark}
\newtheorem{remark}[thm]{Remark}
\begin{document}

\title[Theta blocks related to root systems]{Theta blocks related to root systems}

\author{Moritz Dittmann}

\address{Fachbereich Mathematik, Technische Universit\"{a}t Darmstadt, Darmstadt, Germany}

\email{mdittmann@mathematik.tu-darmstadt.de}

\author{Haowu Wang}

\address{Center for Geometry and Physics, Institute for Basic Science (IBS), Pohang 37673, Korea}

\email{haowu.wangmath@gmail.com}

\subjclass[2020]{11F30, 11F46, 11F50, 11F55, 14K25}

\date{\today}

\keywords{Borcherds products, Gritsenko lifts, Siegel paramodular forms, Jacobi forms, root systems, theta blocks}

\begin{abstract}
Gritsenko, Skoruppa and Zagier associated to a root system $R$ a theta block $\vartheta_R$, which is a Jacobi form of lattice index. We classify the theta blocks $\vartheta_R$ of $q$-order $1$ and show that their Gritsenko lift is a strongly-reflective Borcherds product of singular weight, which is related to Conway's group $\Co_0$. As a corollary we obtain a proof of the theta block conjecture by Gritsenko, Poor and Yuen for the pure theta blocks obtained as specializations of the functions $\vartheta_R$.
\end{abstract}

\maketitle

\section{Introduction}
Eichler and Zagier introduced the theory of Jacobi forms in their monograph \cite{EZ85}. 
Let $k$ and $m$ be non-negative half-integers and $\chi$ a character (or multiplier system) of $\SL_2(\Z)$. A holomorphic Jacobi form of weight $k$, character $\chi$  and index $m$ is a holomorphic function $\varphi\colon \HH\times \C \to \C$ which satisfies 
\[
\varphi\Big(\frac{a\tau+b}{c\tau+d},\frac{z}{c\tau+d}\Big)=\chi\left(\begin{pmatrix}
a & b \\ c & d
\end{pmatrix}\right)\sqrt{c\tau+d}^{2k}e^{2\pi i\frac{mc z^2}{c\tau+d}}\varphi(\tau,z)
\] and
\[
\varphi(\tau,z+\lambda\tau+\mu)=(-1)^{2m(\lambda+\mu)}e^{-2\pi im(\lambda^2\tau+2\lambda z)}\varphi(\tau,z)
\]
for all $\tau\in \HH, z\in \C, \left(\begin{smallmatrix}
a & b \\ c & d
\end{smallmatrix} \right)\in \SL_2(\Z)$ and $\lambda,\mu\in \Z$ and which has a Fourier expansion of the form
\[
\varphi(\tau,z)=\sum_{\substack{n\in \Q \\n\geq 0}}\sum_{\substack{r\in \Z \\ r^2\leq 4mn}}c(n,r)q^ne^{2\pi irz}, \quad q^n=e^{2\pi in \tau}.
\]
Examples of holomorphic Jacobi forms of small weight and index are the Dedekind eta function
\[
\eta(\tau)=q^{1/24}\prod_{n=1}^\infty(1-q^n)
\]
of weight $1/2$ and index $0$ with a multiplier system which we denote by $\nu_\eta$ (note that Jacobi forms of index $0$ do not depend on $z$ and their definition reduces to that of a classical modular form) and the Jacobi theta function of weight and index $1/2$ and multiplier system $\nu_\eta^3$, given by
\[
\vartheta(\tau,z)=\sum_{n=-\infty}^{\infty}\legendre{-4}{n}q^{n^2/8}e^{\pi inz},
\]
or by the triple product identity
\[
\vartheta(\tau,z)=q^{1/8}e^{\pi i z}\prod_{n=1}^{\infty}(1-q^n)(1-q^ne^{2\pi i z})(1-q^{n-1}e^{-2\pi i z}).
\]
For a non-zero integer $a$ we denote by $\vartheta_a$ the function
\[
\vartheta_a(\tau,z)=\vartheta(\tau,az).
\]
This is a Jacobi form of weight $1/2$ and index $a^2/2$. More generally, to a function  $f\colon \Z_{\geq 0}\to\Z$ with finite support, we associate a theta block
\[
\Theta_f(\tau,z)=\eta^{f(0)}(\tau)\prod_{a=1}^\infty (\vartheta_a(\tau,z)/\eta(\tau))^{f(a)},
\]
which is a meromorphic Jacobi form. If the image of $f$ is contained in the non-negative integers, then $\Theta_f$ is called a pure theta block.   For more details on the theory of theta blocks, we refer the reader to \cite{GSZ}.

Jacobi forms can be used to construct paramodular forms. These are Siegel modular forms of degree two with respect to the paramodular group
\[
\Gamma_N=\begin{pmatrix}
\ast & N\ast & \ast & \ast \\ \ast & \ast & \ast & \ast/N \\ \ast & N\ast & \ast & \ast \\ N\ast & N\ast & N\ast &\ast
\end{pmatrix}\cap \Sp_2(\Q), \quad \text{ all } \ast\in \Z
\] of some level $N$. One method to construct paramodular forms is the Gritsenko lift, which sends a holomorphic Jacobi form $\varphi$ to a paramodular form $G(\varphi)$ of the same weight. Another method associates to a nearly holomorphic Jacobi form $\psi$ of weight $0$ with integral singular Fourier coefficients a meromorphic paramodular form $B(\psi)$. This method is essentially the multiplicative Borcherds lift. In \cite{GPY}, Gritsenko, Poor and Yuen investigated paramodular forms which are simultaneously Borcherds products and Gritsenko lifts. From the shapes of the arising paramodular forms, one sees that if $G(\varphi)$ is a Borcherds product, then $\varphi$ must be a theta block with vanishing order one in $q$.

In \cite{GPY}, the following conjecture, which gives a sufficient condition for $G(\varphi)$ being a Borcherds product, was formulated.

\begin{conjecture*}[Theta Block Conjecture]
    Let the pure theta block $\Theta_f$ be a holomorphic Jacobi form of weight $k$ and index $m$ with vanishing order $1$ in $q$, where $k,m\in \Z_{>0}$. We define the nearly holomorphic Jacobi form $\Psi_f=-(\Theta_f|T_-(2))/\Theta_f$ of weight $0$ and index $m$, where $T_-(2)$ is the index raising Hecke operator. Then
    \[
    G(\Theta_f)=B(\Psi_f).
    \]
\end{conjecture*}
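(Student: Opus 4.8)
\medskip
\noindent\textbf{Proof strategy.} The conjecture is open in general; the plan here is to prove it for those pure theta blocks $\Theta_f$ that arise as specializations of the root-system functions $\vartheta_R$ of $q$-order $1$. I would first establish the corresponding identity for $\vartheta_R$ itself, in the form $G(\vartheta_R)=B(\psi_R)$ with $\psi_R=-(\vartheta_R\,|\,T_-(2))/\vartheta_R$, and then specialize. For the specialization, pick $v$ in the index lattice $L_R$ of $\vartheta_R$ with $(\alpha,v)\neq 0$ for every root $\alpha$; the pullback $\vartheta_R(\tau,zv)$ is then the pure theta block $\Theta_f$ with $f(0)=\rk R$ and $f(a)=\#\{\alpha\in R^+:\abs{(\alpha,v)}=a\}$, it still has $q$-order $1$, and $G$ and $T_-(2)$ commute with this restriction while for $B$ one checks that the ambient divisor does not contain the sub-domain, so the quasi-pullback is an honest restriction. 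Hence $G(\vartheta_R)=B(\psi_R)$ restricts to $G(\Theta_f)=B(\Psi_f)$.

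The first step, classifying the root systems $R$ with $\ord_q\vartheta_R=1$, is a finite computation: the $\#R/2$ factors $\vartheta_\alpha/\eta$ each contribute $q^{1/12}$ and the prefactor $\eta^{\rk R}$ contributes $q^{\rk R/24}$, so $\ord_q\vartheta_R=(\rk R+\#R)/24$, and $\ord_q\vartheta_R=1$ reads $\rk R+\#R=24$, equivalently $\sum_i\ell_i(h_i+1)=24$ over the irreducible components (with ranks $\ell_i$ and Coxeter numbers $h_i$); this has only a handful of solutions, which I would tabulate. For every such $R$, the Gritsenko lift $G(\vartheta_R)$ is a holomorphic modular form of weight $\rk R/2$ for $\Orth(2,\rk R+2)$, which is precisely the singular weight — this rigidity is what the argument ultimately rests on.

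The core of the proof is to show that $\psi_R=-(\vartheta_R\,|\,T_-(2))/\vartheta_R$ is a nearly holomorphic Jacobi form of weight $0$ and index $L_R$ with integral Fourier coefficients — integrality being immediate since $\vartheta_R$ is a theta block with leading coefficient $\pm 1$ — whose Fourier coefficients at the singular indices are non-negative, in fact equal to $1$ along a reflective set of vectors, and whose value at the zero index is $\rk R$, so that $B(\psi_R)$ has weight $\rk R/2$. The transformation laws of $\psi_R$ follow at once from those of $\vartheta_R$ and $\vartheta_R\,|\,T_-(2)$. Everything else uses $\ord_q\vartheta_R=1$ decisively: writing $\vartheta_R=q\,D_R+O(q^2)$, where $D_R=\prod_{\alpha\in R^+}\big(e^{\pi i(\alpha,\mathfrak z)}-e^{-\pi i(\alpha,\mathfrak z)}\big)$ is the Weyl denominator of $R$, and $\vartheta_R\,|\,T_-(2)=q\,E_R+O(q^2)$, one obtains $\psi_R\equiv -E_R/D_R\pmod q$, so that the only poles of $\psi_R$ lie along the zero divisor of $D_R$, namely the reflective divisor $\bigcup_{\alpha\in R^+}\{(\alpha,\mathfrak z)\in\Z\tau+\Z\}$, and they are simple. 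Determining the residues there — hence that the divisor multiplicities are exactly $1$, which gives strong reflectivity — together with the zero-index value amounts to computing the leading Fourier--Jacobi coefficient $E_R$ of $\vartheta_R\,|\,T_-(2)$ explicitly from the index-raising Hecke operator. I expect this to be the principal obstacle: it is the one place where the combinatorics of the root system must be confronted, and it is equivalent to determining the divisor of $G(\vartheta_R)$. Granting it, $B(\psi_R)$ is a holomorphic, strongly reflective Borcherds product of singular weight, and computing its Weyl vector — which one relates to the Weyl vector of $R$ inside the corresponding Niemeier lattice — exhibits the symmetry under Conway's group $\Co_0$.

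It remains to identify $G(\vartheta_R)$ with $B(\psi_R)$. From its product expansion the leading Fourier--Jacobi coefficient of $B(\psi_R)$ is again a theta block, and the classification forces it to equal $\vartheta_R$; the leading Fourier--Jacobi coefficient of $G(\vartheta_R)$ is $\vartheta_R$ by construction. Since a holomorphic modular form of singular weight is determined by its leading Fourier--Jacobi coefficient — its Fourier coefficients are supported on isotropic vectors and are therefore recovered from that first layer, equivalently it lies in the image of the Gritsenko lift — it follows that $G(\vartheta_R)=B(\psi_R)$. Specializing along $v$ then gives $G(\Theta_f)=B(\Psi_f)$, which is the conjecture for the pure theta blocks in question. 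To summarize, the only genuinely hard point is the explicit control of the leading Fourier--Jacobi coefficient of $\vartheta_R\,|\,T_-(2)$ — equivalently, of the singular part of $\psi_R$, or of the divisor of $G(\vartheta_R)$; everything else is a finite check or an application of the standard correspondence between Jacobi forms, paramodular forms, and Borcherds products.
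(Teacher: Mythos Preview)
Your overall strategy---prove the identity for $\vartheta_R$ and then specialize---matches the paper, as does the classification of root systems with $q$-order~$1$. But the route you take to $G(\vartheta_R)=B(\psi_R)$ is genuinely different from the paper's, and the difference matters.

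You propose to analyze $\psi_R=-(\vartheta_R|T_-(2))/\vartheta_R$ directly: compute its singular part, verify the coefficients are non-negative integers concentrated on a reflective set, lift, and then match $B(\psi_R)$ with $G(\vartheta_R)$ using a singular-weight rigidity argument. The paper does \emph{not} do this. Instead it constructs a Borcherds product $\Psi_R$ from an entirely separate input---an explicit vector-valued modular form $F$ built via Scheithauer's lift $F_{\Gamma_0(N),f,S}$ of scalar forms (eta quotients) attached to elements of $\Co_0$. The principal part of $F$ is read off directly, so the divisor of $\Psi_R$ is known at the outset. The paper then shows $\operatorname{div}(\Psi_R)\subset\operatorname{div}(G(\vartheta_R))$ by proving (i) $G(\vartheta_R)$ is modular for the full $\Orth^+(M)$ (via a one-dimensionality check on Weil invariants), (ii) $\Orth^+(M)$ acts transitively on each piece of $\operatorname{div}(\Psi_R)$, and (iii) $G(\vartheta_R)$ vanishes on at least one representative of each orbit (coming from the roots of $R$). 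Then $G(\vartheta_R)/\Psi_R$ is holomorphic of weight $0$, hence constant; only \emph{afterwards} does Remark~4.5 identify the Borcherds input as $\psi_R$. So the paper entirely sidesteps the ``principal obstacle'' you flag: it never computes the singular part of $\psi_R$ at all.

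Your alternative has two soft spots. First, the direct determination of the singular coefficients of $\psi_R$---you correctly identify this as the hard part, and you do not indicate how it would actually be carried out. Second, your endgame relies on the claim that a holomorphic form of singular weight is determined by its first Fourier--Jacobi coefficient (``equivalently it lies in the image of the Gritsenko lift''). The support on isotropic vectors does not by itself give this: that every singular-weight form on $2U\oplus L_{\text{ev}}(-1)$ is a Gritsenko lift is a nontrivial statement depending on the lattice, and you would need to prove it for each of the eight cases. The paper's divisor-containment argument avoids this entirely, replacing it by the elementary fact that holomorphic weight-$0$ forms are constant.
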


In this paper we prove a higher-dimensional analogue of the theta block conjecture for certain Jacobi forms $\vartheta_R$ in many variables. More precisely, to a root system $R$ we can attach a holomorphic Jacobi form $\vartheta_R$ of weight $k=\rk(R)/2$ and lattice index $\underline{R}$ (see Theorem \ref{thm:RootThetaFunction}). The Borcherds and Gritsenko lifts of a classical Jacobi form are special cases of more general Borcherds and Gritsenko lifts for Jacobi forms of lattice index. Their images are modular forms for orthogonal groups of signature $(2,n)$ (in the case of a classical Jacobi form, $n=3$ and paramodular forms arise because they can be realized as modular forms for orthogonal groups of signature $(2,3)$). Our main result is the following theorem.

\begin{thm*}[Theorem \ref{thm:mainThm}]
    Let $R$ be a root system such that $\vartheta_R$ has vanishing order $1$ in $q$. Then 
    \[
    G(\vartheta_R)=B\Big(-\frac{\vartheta_R | T_{-}(2)}{\vartheta_R}\Big).
    \]
\end{thm*}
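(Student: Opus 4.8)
The plan is to exhibit both sides of the claimed identity as meromorphic modular forms for the same orthogonal group $\Orth^+$ of signature $(2,n)$ (with $n = \rk(R)+2$, the index lattice being $\underline R \oplus U$ or similar), of the same weight, with the same divisor, and then to pin down the remaining constant. Since both $G(\vartheta_R)$ and $B(-\vartheta_R|T_-(2)/\vartheta_R)$ are modular forms on $\cD_{L}$ for the same lattice $L$, the quotient $G(\vartheta_R)/B(\dots)$ is a meromorphic modular form of weight $0$ and trivial character once we know the divisors agree; such a function is constant, and comparing one Fourier--Jacobi coefficient (the $q^1$-term, which for $G(\vartheta_R)$ is precisely $\vartheta_R$ by construction of the Gritsenko lift) fixes the constant to be $1$.

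The first substantive step is to compute the divisor of the Borcherds product $B(\Psi_R)$, where $\Psi_R = -\vartheta_R|T_-(2)/\vartheta_R$. This is governed by the principal (singular, i.e. negative-index) Fourier coefficients of the weight-$0$ nearly holomorphic Jacobi form $\Psi_R$. Because $\vartheta_R$ has $q$-order $1$, the expansion $\vartheta_R = q\,\xi_R(z) + O(q^2)$ with $\xi_R$ built from the affine root system data (the Weyl denominator of the affine root system), so $\Psi_R$ has a pole of order $1$ in $q$ with a completely explicit principal part expressed through the roots of $R$. The Borcherds divisor is then a sum of rational-quadratic divisors $\lambda^\perp$ indexed by the (finitely many) vectors $\lambda$ with negative norm occurring in this principal part, with multiplicities read off from the Fourier coefficients. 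One must check that this divisor equals the divisor of the Gritsenko lift $G(\vartheta_R)$ — equivalently, that $G(\vartheta_R)$ vanishes exactly along those rational-quadratic divisors, to the right order. The divisor of a Gritsenko lift $G(\varphi)$ can be detected from the vanishing of the Jacobi form $\varphi$ itself along the corresponding sub-Jacobi-forms: $G(\vartheta_R)$ vanishes on $\lambda^\perp$ iff a suitable quasi-pullback / restriction of $\vartheta_R$ vanishes, which again is visible from the product expansion of $\vartheta_R$ as a product of theta functions $\vartheta(\tau, \la r, z\ra)$ over positive roots $r$.

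The technical heart — and the main obstacle — is the divisor matching, carried out uniformly over the classification of root systems $R$ for which $\vartheta_R$ has $q$-order $1$ (established earlier in the paper). For each such $R$ one needs: (i) the list of reflective vectors $\lambda$ (roots, and possibly vectors of the form $r/2$ or combinations coming from the $T_-(2)$ Hecke operator doubling the index) and their multiplicities in $\Psi_R$; and (ii) a proof that $\vartheta_R$, written as $\eta^{\#}\prod_{r>0}\vartheta_r/\eta$, vanishes to precisely the matching order along each $\lambda^\perp$. The vanishing orders of the individual $\vartheta(\tau,\la r,z\ra)$ are controlled by the triple-product expansion (each $\vartheta$ vanishes simply along $\la r,z\ra \in \Z\tau + \Z$), so the total order along $\lambda^\perp$ is a count of roots $r$ with $\la r, \lambda\ra \neq 0$ of appropriate size — a finite root-system combinatorial computation. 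I expect the subtlety to be bookkeeping the contribution of the Hecke operator $T_-(2)$ correctly (it introduces divisors coming from $2$-torsion, i.e. $\lambda$ of the form a root scaled by $1/2$ or sums, and one must verify these already lie in the Gritsenko divisor or cancel), together with confirming that no "extra" divisor appears on either side. Once divisors and weights agree, the constant is nailed by the $q^1$ Fourier--Jacobi coefficient as above, using that $B(\Psi_R)$'s leading Fourier--Jacobi coefficient is $\vartheta_R$ by the standard formula for the first coefficient of a Borcherds product in terms of the principal part of its input. Finally, I would remark that the singular-weight statement ($k = \rk(R)/2$ equalling $n/2 - 1$ where $2n$ is... the signature count) forces $B(\Psi_R)$ to have Fourier expansion supported on isotropic vectors, which is consistent with — and in fact a strong check on — the identification with $G(\vartheta_R)$, whose weight is the same by Theorem~\ref{thm:RootThetaFunction}.
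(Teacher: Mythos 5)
Your overall architecture --- same weight, divisor containment, quotient a holomorphic weight-$0$ form hence constant, constant fixed by the first Fourier--Jacobi coefficient --- is exactly the paper's, and your final step (that the leading Fourier--Jacobi coefficient of a Borcherds product of the required shape is the theta block itself, which forces the input to be $-\vartheta_R|T_-(2)/\vartheta_R$) is precisely Remark \ref{rmk:ShapeOfBorcherds}. The genuine gap is in how you propose to carry out the divisor matching.

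First, the divisors of $G(\vartheta_R)$ that can be read off from the vanishing of $\vartheta_R$ along $\langle\lambda,\mathfrak{z}\rangle\in\Z\tau+\Z$ (Remark \ref{rmk:ZerosOfGritsenko}) are only those $\cD_v$ with $v=(0,0,\lambda,n,0)$, i.e.\ essentially one rational quadratic divisor per root, whereas the Borcherds product vanishes on the entire $\Orth^+(M)$-orbit of such divisors; your ``count of roots $r$ with $\langle r,\lambda\rangle\neq 0$'' does not determine the divisor of a Gritsenko lift. The missing idea is the paper's: prove that $G(\vartheta_R)$ is modular for the \emph{full} group $\Orth^+(M)$, not just the discriminant kernel (Lemmas \ref{lem:ModularFullGroup} and \ref{lem:Weilinvariants}, via $\Orth(D)$-invariance of the associated vector-valued form), and that $\Orth^+(M)$ acts transitively on the reflective vectors of each given norm and order (Lemma \ref{lem:transitiveOnRoots}, via the Eichler criterion), so that checking one representative per orbit suffices. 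Second, not every divisor arises from a root: for $R=B_2\oplus G_2$ there is a divisor $\cD_v$ with $(v,v)=-1$ on which $G(\vartheta_R)$ vanishes not because $\vartheta_R$ vanishes there, but because the corresponding reflection acts by $-1$ on $\theta_R$; a purely root-combinatorial bookkeeping would miss it. Third, your route requires verifying that $-\vartheta_R|T_-(2)/\vartheta_R$ is a nearly holomorphic Jacobi form with non-negative integral singular coefficients of the correct shape; the paper sidesteps this entirely by constructing the reflective singular-weight product $\Psi_R$ independently from Scheithauer's lifts $F_{\Gamma_0(N),f,S}$ of eta products (Theorem \ref{thm:ReflectiveProducts}), proving only the one containment $\operatorname{div}\Psi_R\subseteq\operatorname{div}G(\vartheta_R)$, and identifying the Borcherds input a posteriori.
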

In particular, $G(\vartheta_R)$ is a Borcherds product. It turns out that this Borcherds product already appears in the work of Scheithauer \cite{Scheithauer-Class, Scheithauer-Moonshine} and its expansion at a level $1$ cusp is a twisted denominator identity of the fake monster algebra corresponding to an element $g$ in Conway's group $\Co_0$.

The theorem is proved by showing that the divisor of the right hand side is contained in the divisor of $G(\vartheta_R)$ for all possible choices of $R$. There are eight such root systems $R$. We remark that for $R=8A_1, 3A_2$ and $A_4$ this proof can already be found in the literature (see \cite[Theorem 5.2]{Gritsenko-Reflective} for $8A_1$, \cite[Theorem 5.6]{Gritsenko-Reflective} for $3A_2$ and \cite[Theorem 3.9]{GW} for $A_4$). 

The specialization $\Theta_{x}$ of $\vartheta_R$ at a non-zero vector $x\in \underline{R}$ is defined by $\Theta_x(\tau,z)=\vartheta_R(\tau,xz)$. We only consider vectors $x\in \underline{R}$ such that $\Theta_x$ is not identically zero and has integral index. Then $\Theta_x$ is a pure theta block and the identity in our main theorem remains true after replacing $\vartheta_R$ with $\Theta_x$. This implies the following corollary, which proves the theta block conjecture for all known infinite families of theta blocks of $q$-order 1.

\begin{cor*}[Corollary \ref{cor:MainCorollary}]
    The following infinite series of pure theta blocks of $q$-order 1 satisfy the theta block conjecture.
    \[
    \renewcommand{\arraystretch}{1.2}
    \begin{array}{c|c|c}
    \text{weight} & \text{root system} & \text{theta block} \\ \hline
    & \\[-4mm]
    2&  A_4                                          & \eta^{-6}\vartheta_{a}\vartheta_{b}\vartheta_{c}\vartheta_{d}
    \vartheta_{a+b}\vartheta_{b+c}\vartheta_{c+d}
    \vartheta_{a+b+c}\vartheta_{b+c+d}\vartheta_{a+b+c+d}\\
    & A_1\oplus B_3                          & \eta^{-6}\vartheta_{a}\vartheta_{b}\vartheta_{b+c}\vartheta_{b+2c+2d}
    \vartheta_{b+c+d}\vartheta_{b+c+2d}\vartheta_{c}
    \vartheta_{c+d}\vartheta_{c+2d}\vartheta_{d} \\
    & A_1\oplus C_3                          & \eta^{-6}\vartheta_{a}\vartheta_{b}\vartheta_{2b+2c+d}\vartheta_{b+c}
    \vartheta_{b+2c+d}\vartheta_{b+c+d}\vartheta_{c}
    \vartheta_{2c+d}\vartheta_{c+d}\vartheta_{d}\\
    & B_2\oplus G_2                          & \eta^{-6}\vartheta_{a}
    \vartheta_{a+b}\vartheta_{a+2b}\vartheta_{b}\vartheta_{c}\vartheta_{3c+d}\vartheta_{3c+2d}\vartheta_{2c+d}
    \vartheta_{c+d}\vartheta_{d} \\ \hline
    3& 3A_2                                  & \eta^{-3}\vartheta_{a_1}\vartheta_{a_1+b_1}\vartheta_{b_1}\vartheta_{a_2}\vartheta_{a_2+b_2}              
    \vartheta_{b_2} \vartheta_{a_3}\vartheta_{a_3+b_3}\vartheta_{b_3}   \\      
    &3A_1\oplus A_3                      & \eta^{-3}\vartheta_{a_1}\vartheta_{a_2}\vartheta_{a_3}\vartheta_{a_4}\vartheta_{a_5}              
    \vartheta_{a_6} \vartheta_{a_4+a_5}\vartheta_{a_5+a_6}\vartheta_{a_4+a_5+a_6}   \\    
    &2A_1\oplus A_2\oplus B_2   & \eta^{-3}\vartheta_{a_1}\vartheta_{a_2}\vartheta_{a_3}\vartheta_{a_3+a_4}\vartheta_{a_4}              
    \vartheta_{a_5} \vartheta_{a_5+a_6}\vartheta_{a_5+2a_6}\vartheta_{a_6}  \\ \hline                                                   
    4& 8A_1                                  & \vartheta_{a_1} \vartheta_{a_2}\vartheta_{a_3}\vartheta_{a_4}\vartheta_{a_5}\vartheta_{a_6}     
    \vartheta_{a_7}\vartheta_{a_8}
    \end{array}
    \]
\end{cor*}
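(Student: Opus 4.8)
The plan is to deduce the corollary from Theorem~\ref{thm:mainThm} by specializing $\vartheta_R$ along rational lines in $\underline{R}$. First I would recall the classification of the root systems $R$ for which $\vartheta_R$ has vanishing order $1$ in $q$ --- equivalently $\rk(R)+\abs{R}=24$ --- which produces exactly the eight systems in the table; for each of them Theorem~\ref{thm:mainThm} gives the identity $G(\vartheta_R)=B(\Phi_R)$ with $\Phi_R=-\vartheta_R\,|\,T_-(2)/\vartheta_R$.

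Next, fix one such $R$ and a vector $x\in\underline{R}$ with $\Theta_x\not\equiv 0$ and of integral (hence positive) index. By the product expansion of $\vartheta_R$ over the positive roots provided by Theorem~\ref{thm:RootThetaFunction} --- of the form $\vartheta_R(\tau,\mathfrak z)=\eta(\tau)^{\rk(R)}\prod_{\alpha\in R^+}\vartheta(\tau,(\alpha,\mathfrak z))/\eta(\tau)$ --- the specialization $\Theta_x(\tau,z)=\vartheta_R(\tau,xz)$ is the theta block $\Theta_f$ with $f(0)=\rk(R)$ and $f(a)=\#\{\alpha\in R^+:\abs{(\alpha,x)}=a\}$ for $a\geq 1$. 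Choosing $x$ in the interior of a Weyl chamber forces $(\alpha,x)\neq 0$ for all $\alpha\in R^+$, so $\Theta_x$ is \emph{pure}; it is holomorphic because $\vartheta_R$ is, its weight $k=\rk(R)/2$ is a positive integer for each of the eight systems, its index is a positive integer by the choice of $x$, and --- no $\vartheta$-factor degenerating --- it retains vanishing order $1$ in $q$. Reading off the values $(\alpha,x)$, $\alpha\in R^+$, for $x$ written in suitable coordinates $a,b,c,d$ (resp.\ $a_i$) recovers exactly the eight families of the table, with those coordinates as the free parameters, while the integrality and non-vanishing conditions on $x$ become the usual positivity conditions on the parameters.

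It now remains to show that the identity $G(\vartheta_R)=B(\Phi_R)$ specializes to $G(\Theta_f)=B(\Psi_f)$ with $\Psi_f=-(\Theta_f|T_-(2))/\Theta_f$, which is precisely the theta block conjecture for $\Theta_f$. The device is pullback along the primitive embedding $\Z x\hookrightarrow\underline{R}$: it induces an embedding of the attached lattices of signature $(2,n)$, hence an inclusion of symmetric domains and a weight-preserving, equivariant holomorphic restriction map on modular forms, which on Jacobi forms is exactly the operation $\varphi\mapsto\varphi(\tau,xz)$. The additive (Gritsenko) lift commutes with this restriction and with the Hecke operator $T_-(2)$, so the restriction of $G(\vartheta_R)$ is $G(\Theta_f)$ and the restriction of the input $\Phi_R$ is $\Psi_f$. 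For the multiplicative lift the issue is whether the restriction of the Borcherds product $B(\Phi_R)$ is again a genuine Borcherds product: it is, because the embedded subdomain cannot lie in the divisor of $B(\Phi_R)$ --- its restriction equals $G(\Theta_f)$, which is non-zero since $\Theta_f\not\equiv 0$ and $G$ is injective. Hence the restriction of $B(\Phi_R)$ is a non-zero holomorphic Borcherds product whose divisor is the restriction of $\div(B(\Phi_R))$, and since a Borcherds lift is determined by its input Jacobi form this restriction is $B(\Psi_f)$. Combining, $G(\Theta_f)=G(\Theta_x)=B(\Psi_f)$.

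The genuine obstacle, as anticipated, is the precise compatibility of the \emph{multiplicative} lift with the lattice embedding: one must verify that no spurious linear factors appear (i.e.\ that the pullback is not merely a quasi-pullback) and that the automorphic multiplier together with the normalizing constant of the infinite product transform correctly under $\Z x\hookrightarrow\underline{R}$, so that the restricted product is literally $B(\Psi_f)$ rather than a scalar multiple of it. All of this is pinned down by combining the explicit shape of the input Jacobi form with the non-vanishing of $G(\Theta_f)$; the classification of the eight root systems, the product formula for $\vartheta_R$, and the identification of the specializations with the entries of the table are routine bookkeeping.
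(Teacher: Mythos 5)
Your proposal is correct and follows essentially the same route as the paper: both deduce the corollary from Theorem \ref{thm:mainThm} by specializing along the embedding $s_x\colon \Z\to\underline{R}$, $u\mapsto ux$, and using that the Gritsenko lift, the Borcherds lift (on inputs whose specialization is non-zero), and the operator $T_-(2)$ all commute with $s_x^*$. The only cosmetic difference is that the paper justifies $B(s_x^*\varphi)=s_x^*B(\varphi)$ directly from the explicit product formula \eqref{eq:JacobiLift}, whereas you argue via the divisor and the non-vanishing of $G(\Theta_f)$; both reach the same point.
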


The paper is structured as follows. In Sections \ref{section:JacobiForms} and \ref{section:VVMFs} we recall the definitions and some constructions of Jacobi forms of lattice index and modular forms for the Weil representation. In Section \ref{section:OAF} we recall the definition of the Gritsenko lift and of the Borcherds lift. In Section \ref{sec:5-thetablocks} we determine those root systems $R$ for which $\vartheta_R$ has vanishing order $1$ in $q$ and investigate the corresponding lattices $\underline{R}$. In Section \ref{section:Conway} we construct strongly-reflective Borcherds products $\Psi_R$ of singular weight on the maximal even sublattice of $\underline{R}$ and observe that they already appear in the work of Scheithauer. In Section \ref{section:proof} we prove that $G(\vartheta_R)=\Psi_R$ and deduce our main theorem.

\section{Jacobi forms of lattice index}\label{section:JacobiForms}
We denote by $\HH=\{\tau\in \C : \im(\tau)>0 \}$ the complex upper-half plane and for a complex number $z$ we write $e(z)$ for $e^{2\pi i z}$ and we denote by $\sqrt{z}$ the principal branch of the square root. Let $L$ be an integral positive definite lattice with bilinear form $(\cdot,\cdot)$ and $L^\vee$ its dual lattice. The shadow $L^\bullet$ of $L$ is defined by
\[
L^\bullet=\{y\in \Q\otimes L : (x,x)/2=(y,x)\mod \Z \text{ for all } x\in L\}.
\]
Note that $L^\bullet=L^\vee$ if $L$ is even.
\begin{definition}
    For $k\in\frac{1}{2}\Z$ and a character (or multiplier system) $\chi\colon \SL_2(\Z)\to \C^*$ of finite order a holomorphic  function $\varphi : \HH \times (\C \otimes L) \rightarrow \C$ is 
    called a nearly holomorphic Jacobi form of weight $k$, character $\chi$ and index $L$,
    if it satisfies 
    \begin{align*}
    \varphi \left( \frac{a\tau +b}{c\tau + d},\frac{\mathfrak{z}}{c\tau + d} 
    \right)& = \chi\left(\begin{pmatrix}
    a & b \\ c & d
    \end{pmatrix}\right)\sqrt{c\tau + d}^{2k} 
    e{\left(\frac{c(\mathfrak{z},\mathfrak{z})}{2(c 
            \tau + d)}\right)} \varphi ( \tau, \mathfrak{z} ), \quad \forall\begin{pmatrix}
        a & b \\ c & d
    \end{pmatrix} \in \SL_2(\Z), \\
    \varphi (\tau, \mathfrak{z}+ x \tau + y)&=e((x,x)/2+(y,y)/2)e{\bigl( -\tau(x,x)/2 -(x,\mathfrak{z})\bigr)} 
    \varphi (\tau, \mathfrak{z} ), \quad \forall x,y\in L,
    \end{align*}
    and if its Fourier expansion takes the form
    \begin{equation*}
    \varphi ( \tau, \mathfrak{z} )= \sum_{\substack{n\in \Q \\ n\geq n_0 }}\sum_{\ell\in L^\bullet}f(n,
    \ell)q^n\zeta^\ell, \quad q^n=e^{2\pi i n\tau}, \zeta^\ell=e^{2\pi i (\ell,\mathfrak{z})},
    \end{equation*}
    for some constant $n_0$. The coefficients $f(n,\ell)$ with $2n-(\ell,\ell)<0$ are called the singular coefficients. If all singular coefficients vanish,
    then $\varphi$ is called a holomorphic
    Jacobi form. We denote the spaces of nearly holomorphic and holomorphic Jacobi forms of weight $k$, character $\chi$ and index $L$ by $J_{k,L}^!(
   \chi)$ and $J_{k,L}(\chi)$. If the character is trivial, we omit it.
\end{definition}

\begin{remark}
    If $L$ has rank $1$ and determinant $|L^\vee/L|=m$, then the space of Jacobi forms of index $L$ equals the space of  classical Jacobi forms of index $m/2$ introduced in the introduction. 
\end{remark}

In the introduction we have seen that theta blocks are examples of classical Jacobi forms. Similarly, one can try to obtain Jacobi forms of lattice index as products of a power of $\eta$ and of functions of the form $(\tau,\mathfrak{z})\mapsto \vartheta(\tau,(\ell,\mathfrak{z}))$ for $\tau\in\HH$, $\ell\in L^\vee$ and $\mathfrak{z}\in \C\otimes L$. The following theorem gives examples of Jacobi forms of lattice index of this form.

\begin{thm}[{\cite[Theorem 10.1]{GSZ}}]\label{thm:RootThetaFunction}
    Let $R$ be a root system (in the strict sense, see \cite{Humphreys}, \S 9.2) of rank $n$. Let $R^+$ be a system of positive roots of $R$ and let $F$ denote the subset of simple roots in $R^+$.  For $r$ in $R^+$ and $f$ in $F$,  let $\gamma_{r,f}$ be the (non-negative) integers such that $r=\sum_{f\in F}\gamma_{r,f}f$. The function
    \[
    \vartheta_R(\tau,\mathfrak{z}):= \eta(\tau)^{n-N}\prod_{r\in R^+}\vartheta\left(\tau, \sum_{f\in F}\gamma_{r,f} z_f\right)
    \]
    defines an element of $J_{n/2,\underline{R}}(\nu_\eta^{n+2N})$, where $N=\abs{R^+}$, $\mathfrak{z}=(z_f)_{f\in F}\in \C^F$, and the lattice $\underline{R}$ equals $\Z^F$ equipped with the quadratic form $Q(\mathfrak{z})=\frac{1}{2}\sum_{r\in R^+}\left(\sum_{f\in F}\gamma_{r,f} z_f\right)^2$.
\end{thm}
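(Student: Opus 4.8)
The plan is to verify directly that $\vartheta_R$ satisfies the three defining conditions of an element of $J_{n/2,\underline R}(\nu_\eta^{n+2N})$: automorphy under $\SL_2(\Z)$, the transformation under translations by $\underline R$, and the shape of the Fourier expansion together with the vanishing of the singular coefficients. Everything is assembled from two elementary pieces — $\eta$, of weight $1/2$, index $0$ and multiplier $\nu_\eta$, and $\vartheta$, of weight and index $1/2$ and multiplier $\nu_\eta^3$ — the only structural inputs being that for each $r\in R^+$ the map $\ell_r\colon\C^F\to\C$, $\mathfrak z=(z_f)_f\mapsto\sum_{f\in F}\gamma_{r,f}z_f$, is $\C$-linear and carries $\underline R=\Z^F$ into $\Z$ (since the $\gamma_{r,f}$ are integers), and that $\sum_{r\in R^+}\ell_r(\mathfrak z)^2=2Q(\mathfrak z)=(\mathfrak z,\mathfrak z)$ by construction of $\underline R$.

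For the $\SL_2(\Z)$-transformation I would substitute $z=\ell_r(\mathfrak z)$ into the index-$1/2$ transformation law of $\vartheta$, use $\ell_r(\mathfrak z/(c\tau+d))=\ell_r(\mathfrak z)/(c\tau+d)$, multiply over $r\in R^+$, and include the factor from $\eta(\tau)^{n-N}$; the automorphy factors then combine to weight $\tfrac{n-N}{2}+N\cdot\tfrac12=\tfrac n2$, multiplier $\nu_\eta^{n-N}(\nu_\eta^3)^N=\nu_\eta^{n+2N}$, and exponential $e\bigl(\tfrac{c}{2(c\tau+d)}\sum_r\ell_r(\mathfrak z)^2\bigr)=e\bigl(\tfrac{c(\mathfrak z,\mathfrak z)}{2(c\tau+d)}\bigr)$. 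For the translation law I would fix $x,y\in\underline R$, note $\ell_r(x),\ell_r(y)\in\Z$, and apply the index-$1/2$ elliptic law of $\vartheta$ with integral quasi-periods to each factor $\vartheta(\tau,\ell_r(\mathfrak z))$ (while $\eta^{n-N}$ is inert). After multiplying, the $\tau$- and $\mathfrak z$-dependent exponentials assemble into $e(-\tau(x,x)/2-(x,\mathfrak z))$ by the polarisation identities $(x,x)=\sum_r\ell_r(x)^2$ and $(x,\mathfrak z)=\sum_r\ell_r(x)\ell_r(\mathfrak z)$, and the sign $\prod_r(-1)^{\ell_r(x)+\ell_r(y)}$ equals $e\bigl((x,x)/2+(y,y)/2\bigr)$ because $t^2\equiv t\pmod 2$ forces $\sum_r\ell_r(x)\equiv(x,x)\pmod 2$, and likewise for $y$.

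For the Fourier expansion I would expand each factor as $\vartheta(\tau,\ell_r(\mathfrak z))=\sum_{s\in\Z}\legendre{-4}{s}q^{s^2/8}e(s\ell_r(\mathfrak z)/2)$ and multiply by the $q$-expansion of $\eta(\tau)^{n-N}$, which is supported on exponents in $\tfrac{n-N}{24}+\Z_{\ge 0}$. A Fourier monomial of $\vartheta_R$ then has $q$-exponent $a=\tfrac{n-N}{24}+j+\tfrac18\sum_r s_r^2$ with $j\ge 0$ and all $s_r$ odd, and $\zeta$-exponent $\ell=\tfrac12\sum_r s_r\ell_r^\vee$, where $\ell_r^\vee\in\underline R^\vee$ represents the functional $\ell_r$; in particular the $q$-exponents are bounded below. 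Moreover $(\ell,x)=\tfrac12\sum_r s_r\ell_r(x)\equiv\tfrac12\sum_r\ell_r(x)^2=Q(x)\pmod\Z$ for all $x\in\underline R$ (again using $s_r\ell_r(x)\equiv\ell_r(x)\equiv\ell_r(x)^2\pmod 2$), so $\ell\in\underline R^\bullet$. This already yields $\vartheta_R\in J^!_{n/2,\underline R}(\nu_\eta^{n+2N})$.

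The real content is the vanishing of the singular coefficients, i.e.\ $2a\ge(\ell,\ell)$ for every monomial as above. Writing $\Gamma=(\gamma_{r,f})$ for the $N\times n$ matrix of root coordinates, the Gram matrix of $\underline R$ is $G=\Gamma^{\mathrm T}\Gamma$ and the Gram matrix of $(\ell_r^\vee)_{r\in R^+}$ equals $P:=\Gamma G^{-1}\Gamma^{\mathrm T}$, the orthogonal projection of $\R^{R^+}$ onto the $n$-dimensional column span of $\Gamma$; hence $(\ell,\ell)=\tfrac14 s^{\mathrm T}Ps$, and, discarding the non-negative term coming from $j$, the inequality $2a\ge(\ell,\ell)$ reduces to $\|(I-P)s\|^2\ge\tfrac{N-n}{3}$ for every odd $s\in\Z^{R^+}$. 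For $s=\mathbf 1$ this is an equality: the Freudenthal--de Vries strange formula, here in the form $(\rho_{\underline R},\rho_{\underline R})=\tfrac14\|P\mathbf 1\|^2=\tfrac{n+2N}{12}$ with $\rho_{\underline R}=\tfrac12\sum_{r}\ell_r^\vee$, gives $\|(I-P)\mathbf 1\|^2=N-\tfrac{n+2N}{3}=\tfrac{N-n}{3}$, matching the fact that the lowest $q$-term of $\vartheta_R$ is the Weyl numerator $\sum_{w\in W}\sign(w)\zeta^{w\rho_{\underline R}}$, all of whose exponents have norm $\tfrac{n+2N}{12}$. Proving the inequality for all odd $s$ — equivalently, that $(I-P)\mathbf 1$ is a shortest vector of the coset $\mathbf 1+2\Z^{R^+}$ in the orthogonal complement of the roots — is where the combinatorics of the specific root system $R$ has to be used, and I expect this to be the main obstacle. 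A cleaner route that avoids the estimate altogether is to identify $\vartheta_R$, up to the prefactor $\eta^{n-N}$ and an explicit elementary factor, with the Weyl--Kac denominator of the affine Kac--Moody algebra attached to $R$; decomposing the affine Weyl group as $W\ltimes(\text{root lattice})$, the affine denominator identity rewrites it as a finite sum of theta series of a rescaling of the root lattice, and theta series of positive definite lattices are holomorphic Jacobi forms of weight half the rank, so holomorphicity — in fact, the singular weight — is immediate.
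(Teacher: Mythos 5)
The paper does not prove this statement; it quotes it from the cited source (Theorem 10.1 of Gritsenko--Skoruppa--Zagier), and the only in-house argument is the remark at the end of Section 7, which reproves holomorphicity for the eight root systems of Proposition \ref{prop:rootSystems} by identifying $\vartheta_R$ with the first Fourier--Jacobi coefficient of the holomorphic Borcherds product $\Psi_R$. Against that background your proposal is essentially the standard proof. The verification of the modular and elliptic transformation laws, of the lower bound on the $q$-exponents, and of $\ell\in\underline{R}^\bullet$ is routine and your computations (weight $n/2$, multiplier $\nu_\eta^{n+2N}$, the parity argument $\sum_r\ell_r(x)\equiv(x,x)\bmod 2$) are correct. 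Your first route to the vanishing of the singular coefficients, however, is honestly incomplete where you say it is: the inequality $\lVert(I-P)s\rVert^2\ge(N-n)/3$ for all odd $s\in\Z^{R^+}$, i.e.\ that $(I-P)\mathbf{1}$ is a shortest vector in its odd coset of the orthogonal complement of the roots, is the entire content of the theorem, and the Freudenthal--de Vries computation only establishes the equality case $s=\mathbf{1}$; no general argument is offered for the other cosets. Your second route is the one that actually closes the proof and is precisely the argument of the cited source: the triple product identity turns $\eta^{n-N}\prod_r\vartheta(\tau,\ell_r(\mathfrak{z}))$ into the Weyl--Kac denominator of the affine algebra attached to $R$ (with imaginary-root multiplicity $n$ accounting for the $\eta$-powers), and the Macdonald/denominator identity rewrites it as a finite alternating sum $\sum_{w\in W}\det(w)\,\Theta^{L}_{w\rho}$ of theta series of a positive definite rescaled (co)root lattice; this gives not just holomorphicity but the stronger statement that all Fourier coefficients are supported on $2n=(\ell,\ell)$, consistent with the singular weight $n/2$. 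So the proposal is correct provided one reads it as resting on the denominator-identity route rather than on the unproved lattice inequality.
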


If $\varphi\in J_{k,L}(\chi)$ is a Jacobi form of lattice index, then every non-zero element $x\in L$ can be used to obtain a classical Jacobi form in the following way. Let $K$ be the lattice $\Z$ with bilinear form $(u,v)=muv$, where $m=(x,x)$. We define the embedding $s_x\colon K\to L$ by $s_x(u)=ux$ and  
\[
s_x^*\colon J_{k,L}\to J_{k,K}, \varphi(\tau,\mathfrak{z})\mapsto \varphi(\tau,s_x(w)) \quad (w\in \C\otimes K).
\]
The image is a Jacobi form of index $K$ and we recall that this is the same thing as a classical Jacobi form of index $m/2$. We call the classicial Jacobi form $s_x^*\varphi$ the specialization of $\varphi$ at $x$. By specializing the functions $\vartheta_R$ at an integer vector $x=(x_f)_{f\in F}$ with $x_f\neq 0$ (if one of the $x_f$ equals zero, then $s_x^*\vartheta_R$ vanishes), we obtain a pure theta block
\[
\eta(\tau)^{n-N}\prod_{r\in R^+}\vartheta\left(\tau, z\sum_{f\in F}\gamma_{r,f} x_f\right)\in J_{n/2,Q(x)}(\nu_\eta^{n+2N})
\]
in the variables $(\tau,z)$ in $\HH\times \C$. 

\section{Modular forms for the Weil representation}\label{section:VVMFs}
We recall the definition of a discriminant form. For more details we refer the reader to \cite[Section 2]{ScheithauerWeil}.
A discriminant form is a finite abelian group $D$ with a $\Q/\Z$-valued non-degenerate quadratic form $q\colon D\to \Q/\Z$. We denote by $b\colon D\times D\to \Q/\Z$ the associated bilinear form $b(\gamma_1,\gamma_2)=q(\gamma_1+\gamma_2)-q(\gamma_1)-q(\gamma_2)$. The level of $D$ is the smallest positive integer $N$ such that $Nq(\gamma)=0\mod 1$ for all $\gamma\in D$ and the signature $\sign(D)\in \Z/8\Z$ of $D$ is defined by
\[
\sum_{\gamma\in D}e(q(\gamma))=\sqrt{|D|}e(\sign(D)/8).
\]
 For a positive integer $c$ we define $D_c=\{\gamma\in D : c\gamma=0\}$ and $D^c=\{c\beta : \beta\in D\}$. Then the sequence
 \[
 0\rightarrow D_c\rightarrow D\rightarrow D^c\rightarrow 0
 \]
 is exact. Let $k$ be the largest integer such that $2^k\mid N$. We define the oddity of $D$ to be the signature of $D_{2^k}$. If the signature of $D$ is even, we define a Dirichlet character $\chi_D$ of conductor $N$ by
\[
\chi_D(a)=\legendre{a}{|D|}e((a-1)\odd(D)/8).
\]

If $M$ is an even lattice with dual lattice $M^\vee$, then the reduction $q$ of the quadratic form $x\mapsto (x,x)/2$ on $M^\vee$ modulo $\Z$ turns  $D(M)=M^\vee/M$ into a discriminant form and every discriminant form arises in this way for some even lattice $M$. The level of $D(M)$ coincides with the level of $M$ and the signature of $D$ is equal to the reduction of the signature of $M$ modulo $8$ by Milgram's formula.
\begin{definition}
Let $D$ be a discriminant form of even signature. Let $\C[D]$ be the group ring of $D$ with basis  $\{\textbf{e}_\gamma: \gamma \in D\}$. Then
 \begin{align*}
 \rho_D(T)\textbf{e}_\gamma &= e(-q(\gamma))\textbf{e}_\gamma,\\
 \rho_D(S)\textbf{e}_\gamma &= \frac{e(\sign(D)/8)}{\sqrt{\abs{D}}} \sum_{\beta\in D}e(b(\gamma,\beta))\textbf{e}_\beta
 \end{align*}
 defines a representation of $\SL_2(\Z)$ on $\C[D]$. This representation is called the Weil representation associated to $D$.
\end{definition}

\begin{definition}
Let $F(\tau)=\sum_{\gamma\in D}F_\gamma(\tau)\textbf{e}_\gamma$ be a holomorphic function on $\HH$ with values in $\C[D]$ and $k\in\Z$. The function $F$ is called a nearly holomorphic modular form of weight $k$ for $\rho_D$ if 
\[
F\Big(\frac{a\tau +b}{c\tau +d}\Big)=(c\tau+d)^k\rho_D(A)F(\tau), \quad \forall A=\begin{pmatrix}
a & b \\ c & d
\end{pmatrix} \in \SL_2(\Z)
\]
and if $F$ has a Fourier expansion of the form
\[
F(\tau)=\sum_{\gamma\in D}\sum_{\substack{n\in \Z-q(\gamma)\\ n\geq n_0}}c_\gamma(n)q^n\textbf{e}_\gamma.
\]
The sum 
$\sum_{\gamma\in D}\sum_{n<0}c_\gamma(n)q^n\textbf{e}_\gamma$
is called the principal part of $F$. If the principal part vanishes, then $F$ is called holomorphic.

\end{definition}

\begin{remark}
The orthogonal group $\Orth(D)$ acts on $\C [D]$ via 
$
\sigma\left(\sum_{\gamma\in D}a_\gamma \textbf{e}_\gamma  \right)=\sum_{\gamma\in D}a_\gamma \textbf{e}_{\sigma(\gamma)}
$
and this action commutes with that of $\rho_D$ on $\C [D]$. Thus $\Orth(D)$ acts on modular forms for the Weil representation. 

\end{remark}

One way to obtain vector-valued modular forms is given by the following proposition.
\begin{prp}[{\cite[Theorem 3.1]{Scheithauer-ModForms}}]
    Let $f$ be a scalar-valued modular form of weight $k$ and character $\chi_D$ for $\Gamma_0(N)$. Let $S$ be an isotropic subset of $D$ which is invariant under $(\Z/N\Z)^*$. Then
    \[
    F_{\Gamma_0(N),f,S}(\tau)=\sum_{M\in \Gamma_0(N)\backslash\SL_2(\Z)}\sum_{\gamma\in S}f| M(\tau)\rho_D(M^{-1})\textbf{e}_\gamma
    \]
    is a vector-valued modular form of weight $k$ for $\rho_D$. The function $F_{\Gamma_0(N),f,S}$ is invariant under the automorphisms of $D$ which stabilize $S$.
\end{prp}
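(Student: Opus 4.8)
The statement to be proved is Scheithauer's Theorem 3.1 (as cited): given a scalar modular form $f$ of weight $k$ and character $\chi_D$ for $\Gamma_0(N)$, and an isotropic subset $S\subseteq D$ invariant under $(\Z/N\Z)^*$, the averaged sum $F_{\Gamma_0(N),f,S}$ is a vector-valued modular form of weight $k$ for $\rho_D$, invariant under $\Aut(D)_S$. The plan is a standard orbit-averaging argument, with the main subtlety being well-definedness of the inner terms $f|M(\tau)\rho_D(M^{-1})\textbf{e}_\gamma$ as functions of the coset $\Gamma_0(N)M$.

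First I would verify well-definedness: if $M'=\gamma_0 M$ with $\gamma_0=\left(\begin{smallmatrix}a&b\\c&d\end{smallmatrix}\right)\in\Gamma_0(N)$, then $f|M'=(\chi_D(d))^{-1}\cdot(f|\gamma_0)|M\cdot(\text{slash factor})$; more precisely $f|_k\gamma_0=\chi_D(d)f$, so $f|M'=\chi_D(d)^{-1}f|M$ up to the correct automorphy. Simultaneously $\rho_D(M'^{-1})\textbf{e}_\gamma=\rho_D(M^{-1})\rho_D(\gamma_0^{-1})\textbf{e}_\gamma$, and the key computation is that $\sum_{\gamma\in S}\rho_D(\gamma_0^{-1})\textbf{e}_\gamma=\chi_D(d)\sum_{\gamma\in S}\textbf{e}_\gamma$: this uses that $S$ is isotropic (so $\rho_D(T)$ acts on $\textbf{e}_\gamma$, $\gamma\in S$, as a scalar, in fact trivially since $q(\gamma)=0$) and $(\Z/N\Z)^*$-invariant (so that the $\rho_D(S)$-part of $\gamma_0^{-1}$ permutes $S$ up to the character $\chi_D(d)$), together with the formula $\chi_D(a)=\legendre{a}{|D|}e((a-1)\odd(D)/8)$ and Scheithauer's formula for the action of a general $\left(\begin{smallmatrix}a&b\\c&d\end{smallmatrix}\right)\in\Gamma_0(N)$ on $\C[D]$. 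The two characters $\chi_D(d)^{-1}$ and $\chi_D(d)$ cancel, so the summand over $S$ depends only on the coset.

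Next, granting well-definedness, modularity under $\SL_2(\Z)$ is immediate: for $A\in\SL_2(\Z)$, right multiplication $M\mapsto MA$ permutes the cosets $\Gamma_0(N)\backslash\SL_2(\Z)$, and one checks
\[
F_{\Gamma_0(N),f,S}|_k A = \sum_{M}\sum_{\gamma\in S} f|(MA)\,\rho_D(A^{-1})\rho_D((MA)^{-1})^{-1}\cdots
\]
—I would instead write it cleanly as: $F|_{k}A$ equals $\sum_M\sum_\gamma (f|MA)\rho_D((MA)^{-1})\rho_D(A)\textbf{e}_\gamma$, and reindexing $M'=MA$ gives $\rho_D(A)F$, because $\rho_D(M'^{-1}A)=\rho_D(M'^{-1})\rho_D(A)$. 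Holomorphy of $F$ follows from holomorphy of $f$ at all cusps of $\Gamma_0(N)$ and the fact that finitely many coset representatives are involved; the Fourier expansion in each component then has the shape $\sum_{n\in\Z-q(\gamma)}c_\gamma(n)q^n$ required by the definition of a (nearly holomorphic) vector-valued modular form, because the exponents appearing in $\rho_D(M^{-1})\textbf{e}_\gamma$ lie in the correct cosets of $\Z$.

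Finally, the asserted $\Aut(D)_S$-invariance: an automorphism $\sigma$ stabilizing $S$ acts on $\C[D]$ by permuting basis vectors, commutes with $\rho_D$ (as recalled in the Remark), and permutes the inner sum $\sum_{\gamma\in S}\textbf{e}_\gamma$ trivially since $\sigma(S)=S$; hence $\sigma F_{\Gamma_0(N),f,S}=F_{\Gamma_0(N),f,S}$. I expect the main obstacle to be the character-cancellation step in well-definedness: one must track carefully how a general element of $\Gamma_0(N)$ (not just $T$ and $-I$) acts on $\C[D]$ via $\rho_D$, invoke Scheithauer's explicit description of that action on isotropic elements, and match it against $\chi_D$—this is where the hypothesis that $S$ is isotropic and $(\Z/N\Z)^*$-stable is used in an essential way, and getting the normalization of $\chi_D$ (including the oddity term) to line up requires care.
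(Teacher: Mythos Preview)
The paper does not give its own proof of this proposition: it is stated as a citation of Scheithauer's Theorem~3.1 and used as a black box. There is therefore no argument in the paper to compare your proposal against.

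That said, your plan is the standard one and is essentially how Scheithauer proves the result. You have correctly identified the only nontrivial step, namely well-definedness of the summand on $\Gamma_0(N)\backslash\SL_2(\Z)$, and correctly located what is needed: the explicit description of $\rho_D$ restricted to $\Gamma_0(N)$, showing that $\sum_{\gamma\in S}\textbf{e}_\gamma$ is an eigenvector with eigenvalue $\chi_D$. Two small remarks. First, in your modularity computation you have the factors in the wrong order: after reindexing $M'=MA$ one obtains $\rho_D(M^{-1})=\rho_D(A\,M'^{-1})=\rho_D(A)\rho_D(M'^{-1})$, so $\rho_D(A)$ comes out on the left, not the right; this is what gives $F|_kA=\rho_D(A)F$. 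Second, in the well-definedness step you should be careful with which variable $\chi_D$ is evaluated at (in Scheithauer's convention the character of $\Gamma_0(N)$ is $\chi_D(a)$ for $\left(\begin{smallmatrix}a&b\\c&d\end{smallmatrix}\right)$, consistent with his sign convention for $\rho_D$); once that is pinned down, the cancellation you describe goes through.
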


Suppose $L$ is an even positive definite lattice with discriminant form $D(L)$. The theta series $\Theta_{\gamma}^L\colon \HH\times (\C\otimes L)\to \C$ associated to $L$ and $\gamma\in D=D(L)$ is defined by
\[
\Theta_{\gamma}^{L}(\tau,\mathfrak{z})=\sum_{\ell \in \gamma +L}q^{(\ell,\ell)/2}\zeta^{\ell}, \quad \gamma\in D.
\]
The map 
\begin{equation}\label{eq:Correspondence}
F(\tau)=\sum_{\gamma\in D}F_\gamma(\tau)\textbf{e}_\gamma \longmapsto \sum_{\gamma\in D}F_\gamma(\tau)\Theta_\gamma^L(\tau,\mathfrak{z})
\end{equation}
defines an isomorphism between the spaces of nearly holomorphic modular forms of weight $k$ for $\rho_{D}$ and of nearly holomorphic Jacobi forms of weight $k+\rk(L)/2$ and index $L$. The principal part of $F$ corresponds to the singular Fourier coefficients of the Jacobi form. Hence the map also induces an isomorphism between the subspaces of holomorphic modular forms for $\rho_{D}$ and holomorphic Jacobi forms of index $L$.

\section{Automorphic forms on orthogonal groups}\label{section:OAF}

Let $M$ be an even lattice of signature $(2,n)$ with $n\geq 3$. The Hermitian symmetric domain of type IV attached to $M$ is defined as (we choose one of the two connected components)
\begin{equation*}
\cD(M)=\{[\mathcal{Z}] \in  \PP(\C\otimes M):  (\mathcal{Z}, \mathcal{Z})=0, (\mathcal{Z},\bar{\mathcal{Z}}) > 0\}^{+}.
\end{equation*}
Let $\Orth^+(M) \subset \Orth(M)$ be the index $2$ subgroup preserving the component $\cD(M)$. The discriminant kernel $\widetilde{\Orth}^+(M)$ is the kernel of the natural homomorphism $\Orth^+ (M) \to \Orth(D(M))$. Let $\Gamma$ be a finite index subgroup of $\Orth^+ (M)$ and $k\in \Z$. A modular form of weight $k$ and character $\chi\colon \Gamma\to \C^*$ for $\Gamma$ is a meromorphic function $F\colon \cD(M)^{\bullet}\to \C$ on the affine cone $\cD(M)^{\bullet}$ over $\cD(M)$ satisfying
\begin{align*}
F(t\mathcal{Z})&=t^{-k}F(\mathcal{Z}), \quad \forall t \in \C^*,\\
F(g\mathcal{Z})&=\chi(g)F(\mathcal{Z}), \quad \forall g\in \Gamma.
\end{align*}

If $F$ is holomorphic, then it either has weight 0 in which case it is constant, or has weight at least $n/2-1$ (see \cite[Corollary 3.3]{Bor95}). The minimal possible positive weight $n/2-1$ is called the singular weight.

For any negative norm vector $v\in M^\vee$, we define the rational quadratic divisor associated to $v$ as
\[
\cD_v(M)=v^\perp\cap \cD(M)=\{ [\mathcal{Z}]\in \cD(M) : (\mathcal{Z},v)=0\}. 
\]
We say that a holomorphic orthogonal modular form $F$ for $\widetilde{\Orth}^+(M)$ is reflective if its divisor is a union of divisors of the form $\cD_v(M)$ for roots $v\in M^\vee$ (a root is a primitive vector $v\in M^\vee$ such that the reflection $x\mapsto x-2(x,v)v/(v,v)$ at $v^\perp$ maps $M$ to $M$) and we say that $F$ is strongly-reflective, if in addition the multiplicities of all zeros are $1$.

In his famous paper \cite{Bor98}, Borcherds described the following way to construct orthogonal modular forms with zeros and poles on rational quadratic divisors from vector-valued modular forms. Since they have an infinite product expansion at every $0$-dimensional cusp, they are called Borcherds products. 
\begin{thm}[{\cite[Theorem 13.3]{Bor98}}]\label{Borcherds13.3}
    Let $M$ be an even lattice of signature $(2,n)$, $n\geq 3$. Let $D$ be the discriminant form of $M(-1)$. Let
    \[
    F=\sum_{\gamma\in D}\sum_{m\in \Z-q(\gamma)}c_\gamma(m)q^m\textbf{e}_\gamma
    \] 
    be a nearly holomorphic modular form of weight $1-n/2$ for $\rho_D$ with integral Fourier coefficients $c_\gamma(m)$ for all $m\leq 0$. Then there is a meromorphic function $\Psi\colon\cD(M)^\bullet\to \C$ with the following properties.
    \begin{enumerate}
        \item $\Psi$ is a modular form of weight $c_0(0)/2$ for the group $\Orth(M,F)^+=\{\sigma\in \Orth^+(M) : \sigma(F)=F\}$ and some multiplier system $\chi$ of finite order. If $c_0(0)$ is even, then $\chi$ is a character.
        \item  The only zeros or poles of $\Psi$ lie on rational quadratic divisors $\cD_v(M)$, where $v$ is a primitive vector of negative norm in $M^\vee$. The divisor $\cD_v(M)$ has order
        \begin{equation}\label{DivisorAutProd}
        \sum_{m\in \Z_{>0}} c_{mv}(m^2(v,v)/2).
        \end{equation}
        \item For each primitive norm $0$ vector $z\in M$, an associated vector $z'\in M^\vee$ with $(z,z')=1$ and for each Weyl chamber $W$ of $K=L/\Z z \cong M\cap z^\perp\cap {z'}^\perp$ with $L=M\cap z^\perp$, the restriction $\Psi_z$ has an infinite product expansion converging when $Z$ is in a neighbourhood of the cusp $z$ and $\im (Z)\in W$ which is some constant times
        \[
        e((Z,\rho))\prod_{\substack{\lambda\in K^\vee\\(\lambda,W)>0}}\prod_{\substack{\delta\in M^\vee/M \\ \delta|L=\lambda}}(1-e((\lambda,Z)+(\delta,z')))^{c_\delta((\lambda,\lambda)/2)}.
        \] 
    \end{enumerate}
\end{thm}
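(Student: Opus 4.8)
The statement is Borcherds' product theorem, and I would prove it as he does in \cite{Bor98}, via the \emph{regularized theta lift} (the singular theta correspondence of Harvey--Moore and Borcherds). The plan is to attach to the lattice $M$ of signature $(2,n)$ its Siegel theta function $\Theta_M(\tau,\mathcal{Z})$, a non-holomorphic $\C[D]$-valued function on $\HH\times\cD(M)^\bullet$ whose $\SL_2(\Z)$-transformation in $\tau$ is compatible with $\rho_D$, so that (after the usual power of $\im(\tau)$) the pairing $\langle F(\tau),\overline{\Theta_M(\tau,\mathcal{Z})}\rangle$ is $\SL_2(\Z)$-invariant of weight $0$ in $\tau$, and then to set
\[
\Phi_M(\mathcal{Z},F)=\int_{\SL_2(\Z)\backslash\HH}^{\mathrm{reg}}\langle F(\tau),\overline{\Theta_M(\tau,\mathcal{Z})}\rangle\,d\mu(\tau).
\]
Since $F$ has a pole at the cusp (its principal part), this integral diverges and must be regularized --- by integrating over $\{\im(\tau)\le t\}$, subtracting the divergent term and letting $t\to\infty$, or by inserting $\im(\tau)^s$ and continuing in $s$. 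The outcome is a real-analytic function $\Phi_M(\cdot,F)$ on $\cD(M)^\bullet$, away from the divisors $\cD_v(M)$, that is manifestly invariant under $\Orth(M,F)^+$.

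Next I would analyse $\Phi_M$ in two ways. First, a local computation near a generic point of $\cD_v(M)$ shows that $\Phi_M$ has a logarithmic singularity there: it equals $-2\bigl(\sum_{m>0}c_{mv}(m^2(v,v)/2)\bigr)\log\abs{(\mathcal{Z},v)}$ up to a smooth term, while off all these divisors $\Phi_M$ is pluriharmonic (because the theta integrand, and hence $\Phi_M$, is annihilated by the invariant Laplacian attached to the weight $1-n/2$ of $F$). Second --- the technical heart --- I would compute the Fourier expansion of $\Phi_M$ at a $0$-dimensional cusp determined by a primitive isotropic $z\in M$. Choosing $z'\in M^\vee$ with $(z,z')=1$, writing $L=M\cap z^\perp$ and $K=L/\Z z$, one splits off the hyperbolic plane generated by $z$ and unfolds the regularized integral against $\Gamma_\infty\backslash\SL_2(\Z)$ (equivalently, Poisson-summing inside the theta series). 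After a long but explicit manipulation this yields, on the region where $\im(Z)$ lies in a Weyl chamber $W$ of $K$, an identity of the shape
\[
\Phi_M(\mathcal{Z},F)=-2\log\abs{\Psi_z(Z)}-c_0(0)\log\abs{Y}+\text{const},
\]
where $Z$ are tube coordinates near the cusp $z$, $Y=\im(Z)$, and $\Psi_z$ is exactly the infinite product written in part (3); in particular the Weyl vector $\rho$ appears as a regularized value of an a priori divergent sum over $K^\vee$.

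Granting this, the remaining assertions follow essentially formally. Since $\Phi_M$ is globally defined and pluriharmonic off the $\cD_v(M)$ and $\log\abs{Y}$ is pluriharmonic, $\log\abs{\Psi_z}$ is pluriharmonic there, so a local holomorphic branch $\Psi_z$ exists, with no zeros or poles away from the divisors; the singularity of $\Phi_M$ forces $\Psi_z$ to vanish or have a pole along $\cD_v(M)$ to the order $\sum_{m>0}c_{mv}(m^2(v,v)/2)$, i.e.\ \eqref{DivisorAutProd}. The $\Orth(M,F)^+$-invariance of $\Phi_M$, together with the transformation of $\log\abs{Y}$ and the cocycle relating tube coordinates to the affine cone, shows that the local functions $\Psi_z$ at the various cusps patch to one global meromorphic $\Psi$ on $\cD(M)^\bullet$ satisfying $\Psi(t\mathcal{Z})=t^{-c_0(0)/2}\Psi(\mathcal{Z})$ and $\Psi(g\mathcal{Z})=\chi(g)\Psi(\mathcal{Z})$ for $g\in\Orth(M,F)^+$ and some multiplier system $\chi$; integrality of the $c_\gamma(m)$ with $m\le 0$ forces $\chi$ to have finite order, and to be a character when $c_0(0)$ is even. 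The product expansion of part (3) is then exactly $\Psi_z$, and the weight is $c_0(0)/2$ because that is the exponent in the homogeneity just found.

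The main obstacle is the middle step: making the regularized theta integral rigorous (convergence, and the legitimacy of interchanging the regularization with the unfolding) and then carrying out the Poisson-summation/Rankin--Selberg unfolding that produces the product, including the identification of $\rho$ as a regularized sum over $K^\vee$. One also needs a Koecher-type argument --- this is where $n\ge 3$ enters --- to know that the $\Psi_z$, defined a priori only near each cusp, glue to a genuine meromorphic modular form on all of $\cD(M)^\bullet$. Once the theta lift and its cuspidal Fourier expansion are available, extracting holomorphy, the weight $c_0(0)/2$, the character $\chi$, and the divisor is comparatively routine.
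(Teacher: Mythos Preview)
The paper does not prove this theorem; it is merely quoted from \cite[Theorem 13.3]{Bor98} as background for the Borcherds lift, with no argument given. Your sketch follows Borcherds' original proof via the regularized theta lift, so there is nothing to compare and your outline is appropriate.
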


For the rest of this section we assume that $M$ splits two hyperbolic planes, i.e. $M=U\oplus U_1\oplus L(-1)$, where $U=\Z e\oplus\Z f$ ($(e,e)=(f,f)=0$, $(e,f)=1$), $U_1=\Z e_1\oplus\Z f_1$ and $L$ is an even positive definite lattice. We choose $(e,e_1,\ldots,f_1,f)$ as a basis of $M$. 
Here $\ldots$ denotes a basis of $L(-1)$.  

Every $[\mathcal{Z}]\in \cD(M)$ has a unique representative of the form $(\ast,\tau,\mathfrak{z},\omega,1)\in \cD(M)^\bullet$ with $\tau,\omega\in \HH$ and $\mathfrak{z}\in \C\otimes L$. Therefore, at the one-dimensional cusp determined by the isotropic plane $\latt{e,e_1}$, the symmetric space $\cD(M)$ can be realized as the tube domain 
\[\cH(L)=\{Z=(\tau,\mathfrak{z},\omega)\in \HH\times (\C\otimes L)\times \HH: 
(\im Z,\im Z)>0\}, 
\]
where $(\im Z,\im Z)=2\im \tau \im \omega - 
(\im \mathfrak{z},\im \mathfrak{z})_L$. In this realization an orthogonal modular form $F$ of weight $k$ and trivial character for $\widetilde{\Orth}^+(M)$ has a Fourier-Jacobi expansion
\[
F(\tau,\mathfrak{z},\omega)=\sum_{m\in \Z_{\geq 0}}\varphi_m(\tau,\mathfrak{z})e(m\omega)
\] 
where $\varphi_m$ is a Jacobi form of weight $k$ and index $L(m)$.

The Gritsenko lift associates an orthogonal modular form to a Jacobi form of lattice index.
\begin{thm}[{\cite[Theorem 3.1]{Gri94}}]
    Let $k$ be integral and $\varphi \in J_{k,L}$. For a positive integer $m$, we let
    \[
    \varphi | T_{-}(m)(\tau,
    \mathfrak{z})=m^{-1}\sum_{\substack{ad=m,a>0\\ 0\leq b <d}}a^k \varphi
    \left(\frac{a\tau+b}{d},a\mathfrak{z}\right).
    \]
    Then the function
    \[ G(\varphi)(Z)=f(0,0)G_k(\tau)+\sum_{m\geq 1}\varphi |
    T_{-}(m)(\tau,\mathfrak{z}) e(m\omega)
    \]
    is a modular form of weight $k$ and trivial character for $\widetilde{\Orth}^+(2U\oplus L(-1))$. Moreover, this modular form is symmetric, i.e.
    $\Grit(\varphi)(\tau,\mathfrak{z}, \omega)=
    \Grit(\varphi)(\omega,\mathfrak{z}, \tau)$. Here $f(0,0)$ is the zeroth Fourier coefficient of $\varphi$ and $G_k$ is the Eisenstein series of weight $k$, normalized such that the Fourier coefficient at $q$ is $1$.
\end{thm}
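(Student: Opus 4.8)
The plan is to check directly that $\Grit(\varphi)$ is a holomorphic function on $\cD(2U\oplus L(-1))$ which is invariant under a set of generators of $\widetilde{\Orth}^+(2U\oplus L(-1))$, and then to read off the symmetry from an explicit Fourier expansion. First I would show that $\varphi|T_-(m)\in J_{k,L(m)}$ for every $m\ge 1$. This is the lattice-index analogue of the index-raising Hecke operators of \cite{EZ85}: the family of terms $a^k\varphi\bigl(\frac{a\tau+b}{d},a\mathfrak z\bigr)$ with $ad=m$, $0\le b<d$ is permuted, up to an overall $\SL_2(\Z)$-substitution, under the transformations appearing in the Jacobi transformation law of index $L(m)$, while the rescaling $\mathfrak z\mapsto a\mathfrak z$ turns index $L$ into index $L(m)$. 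Expanding $\varphi=\sum_{n,\ell}f(n,\ell)q^n\zeta^\ell$ with $\ell\in L^\bullet$ and carrying out the geometric sum over $b$ shows that the coefficient of $q^n\zeta^\ell$ in $\varphi|T_-(m)$ equals $\sum_{d\mid(n,\ell,m)}d^{k-1}f(nm/d^2,\ell/d)$, where $d\mid(n,\ell,m)$ abbreviates $d\mid n$, $d\mid m$, $\ell/d\in L^\bullet$; in particular it is supported where the coefficients of $\varphi$ are, so $\varphi|T_-(m)$ is holomorphic.

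Since these coefficients grow only polynomially in $m$, the series $\Grit(\varphi)=f(0,0)G_k(\tau)+\sum_{m\ge 1}(\varphi|T_-(m))(\tau,\mathfrak z)e(m\omega)$ converges locally uniformly on $\cH(L)$ and is holomorphic there. The shape of this Fourier--Jacobi series makes $\Grit(\varphi)$ invariant under the Jacobi subgroup $\Gamma_J$ of $\widetilde{\Orth}^+(2U\oplus L(-1))$, generated by the parabolic of the one-dimensional cusp $\latt{e,e_1}$ together with the embedded copy of $\SL_2(\Z)$ acting on $\tau$ in the usual Jacobi fashion: the translations $\tau\mapsto\tau+1$ and $\omega\mapsto\omega+1$ are built in; the $\SL_2(\Z)$-action on $\tau$ and the integral Heisenberg action on $(\mathfrak z,\omega)$ are exactly the modular and elliptic transformation laws satisfied by each Jacobi form $\varphi|T_-(m)$ of index $L(m)$ from the previous paragraph; and each $\varphi|T_-(m)$ is invariant under the $\widetilde{\Orth}(L)$-action on $\mathfrak z$ because the theta components $\Theta^L_\gamma$ in the decomposition \eqref{eq:Correspondence} are $\widetilde{\Orth}(L)$-invariant in $\mathfrak z$.

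It then remains to check invariance under the involution $V\colon(\tau,\mathfrak z,\omega)\mapsto(\omega,\mathfrak z,\tau)$, induced by the element of $\widetilde{\Orth}^+(2U\oplus L(-1))$ that interchanges the two hyperbolic planes. By a standard structural fact --- this is where the splitting of two hyperbolic planes enters --- the subgroup $\Gamma_J$ together with $V$ generates all of $\widetilde{\Orth}^+(2U\oplus L(-1))$. Writing out the Fourier expansion $\Grit(\varphi)=\sum A(n,\ell,m)q^n\zeta^\ell e(m\omega)$, the computation above gives $A(n,\ell,m)=\sum_{d\mid(n,\ell,m)}d^{k-1}f(nm/d^2,\ell/d)$ for $n,m\ge 1$, which is visibly symmetric under $n\leftrightarrow m$; the terms with $n=0$ or $m=0$ come from $f(0,0)G_k(\tau)$ and fit the same formula under the stated normalization of $G_k$. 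Hence $\Grit(\varphi)\circ V=\Grit(\varphi)$, so $\Grit(\varphi)$ is $\widetilde{\Orth}^+(2U\oplus L(-1))$-invariant of weight $k$ with trivial character; being a locally uniformly convergent series of holomorphic functions it is holomorphic, hence a holomorphic modular form of weight $k$, and the coefficient symmetry just found is precisely the asserted identity $\Grit(\varphi)(\tau,\mathfrak z,\omega)=\Grit(\varphi)(\omega,\mathfrak z,\tau)$.

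The main obstacle is the first step: proving that $\varphi|T_-(m)$ genuinely lies in $J_{k,L(m)}$ and computing its Fourier coefficients, which requires careful bookkeeping with the coset representatives and with the shadow lattice $L^\bullet$. Granted this, together with the structural fact that $\Gamma_J$ and $V$ generate the group, the remaining points --- convergence, invariance under $\Gamma_J$, and the $n\leftrightarrow m$ symmetry --- are formal. An alternative route identifies $\Grit(\varphi)$ with Borcherds' additive theta lift of the vector-valued modular form attached to $\varphi$ by \eqref{eq:Correspondence}, which delivers the modularity at once but then requires matching the two constructions coefficient by coefficient.
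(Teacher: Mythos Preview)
The paper does not prove this theorem; it is quoted verbatim as \cite[Theorem 3.1]{Gri94} and used as a black box. So there is no ``paper's own proof'' to compare against.

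Your outline is the standard one and is correct. A few remarks on the steps you flag as obstacles. The verification that $\varphi|T_-(m)\in J_{k,L(m)}$ goes through exactly as in \cite[\S4]{EZ85} once you observe that the elliptic variable only enters through the rescaling $\mathfrak z\mapsto a\mathfrak z$, which converts index $L$ into index $L(a^2)\subset L(m)$; the lattice index plays no new role here. The structural fact that $\Gamma_J$ together with the exchange $V$ generates $\widetilde{\Orth}^+(2U\oplus L(-1))$ is the point where the two hyperbolic planes are used; this is in \cite{Gri94} (and, in a more general form, follows from Eichler's description of the stable orthogonal group via Eichler transvections). Your handling of the boundary terms is right in spirit but slightly misstated: the coefficients with $n=0$, $m\ge 1$ do not come from $G_k(\tau)$ but from the constant terms of the $\varphi|T_-(m)$, which equal $f(0,0)\sigma_{k-1}(m)$ because $f(0,\ell)=0$ for $\ell\neq 0$ by positive definiteness of $L$; this matches the $q$-expansion of $f(0,0)G_k(\omega)$, and the single constant term $f(0,0)\cdot[\text{const.\ of }G_k]$ is shared, so the $n\leftrightarrow m$ symmetry holds on the whole expansion.

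The alternative you mention at the end---identifying $\Grit(\varphi)$ with Borcherds' additive theta lift via \eqref{eq:Correspondence}---is in fact the viewpoint the present paper adopts when it needs properties of the lift (see the proof of Lemma~\ref{lem:ModularFullGroup}), but it still cites the Jacobi-form formulation from \cite{Gri94} rather than deriving it.
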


\begin{remark}\label{rmk:ZerosOfGritsenko}
    Let $\ell$ be a non-zero vector in $L^\vee$ such that $\varphi$ vanishes on
    \[
    \{(\tau,\mathfrak{z})\in \HH\times (\C\otimes L) : (\ell,\mathfrak{z})\in \Z\tau+\Z\}.
    \]
    Then the same is true for $\varphi\lvert_{k}T_{-}(m)$ for every $m\geq 1$. Therefore, if $f(0,0)=0$, then $G(\varphi)$ vanishes on $\cD_v(M)$ for every $v\in M^\vee$ of the form $v=(0,0,\ell,n,0)$ with $n\in \Z$. 
\end{remark}
\begin{remark}
    Using \eqref{eq:Correspondence}, the Gritsenko lift can also be described in terms of vector-valued modular forms instead of Jacobi forms. In this setting the Gritsenko lift is known as the additive Borcherds lift, which also exists if $M$ does not split two hyperbolic planes (see \cite[Theorem 14.3]{Bor98}).
\end{remark}

Using the correspondence between Jacobi forms and vector-valued modular forms, we can also describe Theorem \ref{Borcherds13.3} in terms of Jacobi forms.
\begin{thm}[{\cite[Theorem 4.2]{Gritsenko-Reflective}}]
    Let $L$ be an even positive definite lattice. Let
    \[
    \varphi(\tau,\mathfrak{z})=\sum_{n\in \Z,\ell\in L^\vee}f(n,\ell)q^n\zeta^\ell\in J_{0,L}^!
    \]
    with $f(n,\ell)\in \Z$ for all $2n-(\ell,\ell)\leq 0$. There is a meromorphic modular form of weight 
    $f(0,0)/2$ and character $\chi$ with respect to  
    $\widetilde{\Orth}^+(2U\oplus L(-1))$ defined as
    \begin{equation}\label{eq:JacobiLift}
    B(\varphi)=
    \biggl(\Theta_{f(0,\ast)}
    (\tau,\mathfrak{z})e^{2\pi i\, C\omega}\biggr)
    \exp \left(-G(\varphi)\right),
    \end{equation}
    where
    $C=\frac{1}{2\rk(L)}\sum_{\ell\in L^\vee}f(0,\ell)(\ell,\ell)$ and
    \begin{equation*}\label{FJtheta}
    \Theta_{f(0,\ast)}(\tau,\mathfrak{z})
    =\eta(\tau)^{f(0,0)}\prod_{\ell >0}
    \biggl(\frac{\vartheta(\tau,(\ell,\mathfrak{z}))}{\eta(\tau)} 
    \biggr)^{f(0,\ell)}
    \end{equation*}
    is a theta block.
    The character $\chi$ is induced by the character of the theta block
    and by the relation $\chi(V)=(-1)^D$, where
    $V\colon (\tau,\mathfrak{z}, \omega) \mapsto (\omega,\mathfrak{z},\tau)$, 
    and $D=\sum_{n<0}\sigma_0(-n) f(n,0)$.
    
    The poles and zeros of $B(\varphi)$ lie on the rational quadratic 
    divisors $\cD_v$, where $v\in 2U\oplus L^\vee(-1)$ is a primitive vector 
    with $(v,v)<0$. The multiplicity of this divisor is given by 
    \[ \mult \cD_v = \sum_{d\in \Z_{>0} } f(d^2n,d\ell),\]
    where $n\in\Z$, $\ell\in L^\vee$ such that 
    $(v,v)=2n-(\ell,\ell)$ and $v-(0,0,\ell,0,0)\in 2U\oplus L(-1)$.
    
\end{thm}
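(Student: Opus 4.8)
The plan is to derive this from Borcherds' Theorem~\ref{Borcherds13.3} via the dictionary~\eqref{eq:Correspondence} between Jacobi forms of lattice index and vector-valued modular forms, and then to make Borcherds' infinite product explicit at a zero-dimensional cusp.

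Set $M=2U\oplus L(-1)$; it has signature $(2,n)$ with $n=\rk(L)+2\geq 3$, and $M(-1)\cong 2U\oplus L$ has discriminant form $D=D(L)$. By~\eqref{eq:Correspondence} the form $\varphi\in J^!_{0,L}$ corresponds to a nearly holomorphic modular form
\[
F(\tau)=\sum_{\gamma\in D}\sum_{m\in\Z-q(\gamma)}c_\gamma(m)q^m\textbf{e}_\gamma
\]
of weight $-\rk(L)/2=1-n/2$ for $\rho_D$, with $c_{\ell+L}(n-(\ell,\ell)/2)=f(n,\ell)$; in particular the principal part of $F$ corresponds to the singular coefficients of $\varphi$. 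Thus the hypothesis $f(n,\ell)\in\Z$ for all $2n-(\ell,\ell)\le 0$ is exactly integrality of the coefficients $c_\gamma(m)$ of $F$ with $m\le 0$, so Theorem~\ref{Borcherds13.3} applies and produces a meromorphic modular form $\Psi$ on $\cD(M)^\bullet$ of weight $c_0(0)/2=f(0,0)/2$ for a group containing $\widetilde{\Orth}^+(M)$. Transporting the order formula~\eqref{DivisorAutProd} through $M^\vee=2U\oplus L^\vee(-1)$ and the normalization $v-(0,0,\ell,0,0)\in 2U\oplus L(-1)$ turns $\sum_{m>0}c_{mv}(m^2(v,v)/2)$ into $\sum_{d>0}f(d^2n,d\ell)$, where $2n-(\ell,\ell)=(v,v)$; this is the asserted multiplicity of $\cD_v$, so in particular the divisor of $\Psi$ lies on the stated rational quadratic divisors.

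It remains to identify $\Psi$ with the right-hand side of~\eqref{eq:JacobiLift}. Since $\cH(L)$ is the tube realization of $\cD(M)$ at the cusp $\latt{e,e_1}$, I would use part~(3) of Theorem~\ref{Borcherds13.3} at a primitive norm-$0$ vector in this isotropic plane, so that $K\cong U\oplus L(-1)$, $K^\vee\cong U\oplus L^\vee(-1)$ and the infinite product is a series in the coordinates $Z=(\tau,\mathfrak{z},\omega)$. I would then split the product over $\lambda\in K^\vee$ according to the $\omega$-degree of the factor it contributes. Using the dictionary, the $\omega$-degree-$0$ factors have exponents $c_\delta$ that translate to $f(0,\ell)$, and together with the Weyl-vector prefactor $e((Z,\rho))$ they reassemble, via the triple product identity for $\vartheta$, into $\eta(\tau)^{f(0,0)}\prod_{\ell>0}\bigl(\vartheta(\tau,(\ell,\mathfrak{z}))/\eta(\tau)\bigr)^{f(0,\ell)}\cdot e^{2\pi i C\omega}=\Theta_{f(0,\ast)}(\tau,\mathfrak{z})\,e^{2\pi i C\omega}$, with $C=\tfrac{1}{2\rk(L)}\sum_{\ell}f(0,\ell)(\ell,\ell)$ the $\omega$-component of $\rho$; applying $\log(1-x)=-\sum_{k\ge 1}x^k/k$ to the factors of positive $\omega$-degree and regrouping the resulting double sum by the value $m$ of the $\omega$-degree produces $\exp\bigl(-\sum_{m\ge 1}(\varphi|T_-(m))(\tau,\mathfrak{z})\,e(m\omega)\bigr)$, i.e.\ $\exp(-G(\varphi))$ up to the purely-$\tau$ term, which is absorbed into $\eta(\tau)^{f(0,0)}$. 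Exponentiating gives~\eqref{eq:JacobiLift}. Finally, the $\SL_2(\Z)$-transformation of $\Theta_{f(0,\ast)}$ supplies the character of the theta block, while the value $\chi(V)=(-1)^D$ on the involution $V\colon(\tau,\mathfrak{z},\omega)\mapsto(\omega,\mathfrak{z},\tau)$ is read off from the behaviour of $\Psi$ under the exchange of the two norm-$0$ directions, the sign being governed by the purely-$\tau$ part of the principal part, i.e.\ by $D=\sum_{n<0}\sigma_0(-n)f(n,0)$.

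The main obstacle is the bookkeeping in the last paragraph: computing Borcherds' Weyl vector $\rho$ (its linear part along $\tau$ and $\mathfrak{z}$ producing the $\eta$-power and the arguments of the $\vartheta$-factors, its $\omega$-component producing the constant $C$ with its factor $\tfrac12$), checking term by term that the regrouped product over positive $\omega$-degrees is the Hecke expansion defining $G(\varphi)$, and pinning down the multiplier system (in particular the sign $\chi(V)$ and the normalization of $G(\varphi)$ in weight $0$). Structurally this is the same computation as in the classical paramodular/Gritsenko--Nikulin case, now carried out for a general positive definite index lattice $L$; it is routine, but it is exactly where these delicate normalizations and the convergence near the cusp must be controlled.
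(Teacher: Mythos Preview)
The paper does not actually prove this theorem: it is quoted verbatim as \cite[Theorem 4.2]{Gritsenko-Reflective}, introduced by the sentence ``Using the correspondence between Jacobi forms and vector-valued modular forms, we can also describe Theorem~\ref{Borcherds13.3} in terms of Jacobi forms.'' So there is no in-paper proof to compare against. That said, your strategy is exactly the one the paper's framing signals and the one carried out in the cited reference: pass from $\varphi$ to the vector-valued form $F$ via~\eqref{eq:Correspondence}, apply Theorem~\ref{Borcherds13.3}, and then unpack the product expansion at the cusp $\langle e,e_1\rangle$ to recover the theta block times $\exp(-G(\varphi))$. Your identification of the weight, of the divisor formula, and of the Hecke regrouping is correct in outline; the caveats you flag (Weyl vector bookkeeping, the $\omega$-component giving $C$, and the sign $\chi(V)$) are precisely the points where the computation in \cite{Gritsenko-Reflective} does the work.
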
 

\begin{remark}[{\cite[Corollary 4.3]{Gritsenko-Reflective}}]\label{rmk:ShapeOfBorcherds}
From \eqref{eq:JacobiLift}, we see that the Fourier-Jacobi expansion of $B(\varphi)$ at the one-dimensional cusp determined by the decomposition $M=2U\oplus L(-1)$ is given by
\[
B(\varphi)(\tau,\mathfrak{z},\omega)=\Theta_{f(0,\ast)}(\tau,\mathfrak{z})e^{2\pi i C\omega}\left(1-\varphi(\tau,\mathfrak{z})e^{2\pi i \omega}+\frac{1}{2}\big(\varphi^2(\tau,\mathfrak{z})-\varphi|_0T_-(2)(\tau,\mathfrak{z})\big)e^{4\pi i \omega}+\ldots\right).
\]
In particular, we see that if the Gritsenko lift $G(\vartheta_R)=\vartheta_R e^{2\pi i \omega}+\vartheta_R|T_-(2)e^{4\pi i \omega}+\ldots$ is a Borcherds product $B(\varphi)$, then $C=1$, $\Theta_{f(0,\ast)}=\vartheta_R$ and
\[
\varphi=-\frac{\vartheta_R|T_-(2)}{\vartheta_R}.
\]
\end{remark}

\section{Theta blocks related to root systems}\label{sec:5-thetablocks}
The theta block conjecture mentioned in the introduction states that every pure theta block $\Theta$ with order of vanishing $1$ in $q$ satisfies $G(\Theta)=B(-\frac{\Theta | T_{-}(2)}{\Theta})$. Recall from Section \ref{section:JacobiForms} that one way to obtain theta blocks is by specializing the Jacobi forms $\vartheta_R$ from Theorem \ref{thm:RootThetaFunction}. The following theorem, which we prove in Section \ref{section:proof}, implies that the theta block conjecture is true for theta blocks obtained in this way.
\begin{thm}\label{thm:mainThm}
 Let $R$ be a root system and let $\vartheta_R$ be as in Theorem \ref{thm:RootThetaFunction}. Suppose that $\vartheta_R$ has vanishing order $1$ in $q$. Then 
 \[
 G(\vartheta_R)=B\Big(-\frac{\vartheta_R | T_{-}(2)}{\vartheta_R}\Big).
 \]
\end{thm}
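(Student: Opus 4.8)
The plan is to identify $G(\vartheta_R)$ with the Borcherds product $\Psi_R$ constructed in Section~\ref{section:Conway} and then to recover its input from the Fourier--Jacobi expansion. Write $n=\rk(R)$ and $N=\abs{R^+}$; the $q$-order hypothesis forces $n+2N=24$, so the character $\nu_\eta^{n+2N}$ of $\vartheta_R$ is trivial and only the eight root systems appearing in Corollary~\ref{cor:MainCorollary} have to be considered. Let $L$ be the even lattice attached to $\underline R$ in Section~\ref{sec:5-thetablocks} (equal to $\underline R$ if $\underline R$ is even, and to its maximal even sublattice otherwise) and put $M=2U\oplus L(-1)$, of signature $(2,n+2)$; then $n/2$ is exactly the singular weight of $M$. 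From Section~\ref{section:Conway} we have a \emph{holomorphic} modular form $\Psi_R=B(\psi_R)$ of weight $n/2$ for $\widetilde{\Orth}^+(M)$, attached to a suitable $\psi_R\in J_{0,L}^!$, which is strongly reflective with an explicitly computed zero divisor, and whose first Fourier--Jacobi coefficient at the one-dimensional cusp determined by $M=2U\oplus L(-1)$ equals $\vartheta_R$ (equivalently, in the notation of \eqref{eq:JacobiLift}, $C=1$ and $\Theta_{f(0,\ast)}=\vartheta_R$). On the Gritsenko side, $\vartheta_R$ has no constant term, so $f(0,0)=0$ and $G(\vartheta_R)=\sum_{m\ge 1}\vartheta_R|T_-(m)\,e(m\omega)$; in particular its first Fourier--Jacobi coefficient is also $\vartheta_R$.

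The core of the argument is the divisor inclusion $\div(\Psi_R)\subseteq\div\bigl(G(\vartheta_R)\bigr)$. Since $\Psi_R$ is holomorphic and strongly reflective, its divisor is a finite union of $\widetilde{\Orth}^+(M)$-orbits of divisors $\cD_v(M)$ with $v$ a root of $M^\vee$, each of multiplicity one, and the set of such $v$ is read off from the principal part of $\psi_R$ via the multiplicity formula in Theorem~\ref{Borcherds13.3}. It therefore suffices to produce, for each such orbit, one vector $v$ with $\cD_v(M)\subseteq\div\bigl(G(\vartheta_R)\bigr)$. For this one uses the product shape $\vartheta_R=\eta^{n-N}\prod_{r\in R^+}\vartheta\bigl(\tau,(\ell_r,\mathfrak{z})\bigr)$, where $\ell_r\in\underline R^\vee\subseteq L^\vee$ is the vector representing the linear form $\mathfrak{z}\mapsto\sum_{f\in F}\gamma_{r,f}z_f$: the $r$-th factor vanishes along $\{(\tau,\mathfrak{z}):(\ell_r,\mathfrak{z})\in\Z\tau+\Z\}$, so by Remark~\ref{rmk:ZerosOfGritsenko} (applicable since $f(0,0)=0$) the form $G(\vartheta_R)$ vanishes on $\cD_{v_r}(M)$ for $v_r=(0,0,\ell_r,0,0)\in M^\vee$, and, $\div(G(\vartheta_R))$ being $\widetilde{\Orth}^+(M)$-invariant, it contains the full $\widetilde{\Orth}^+(M)$-orbit of each $\cD_{v_r}(M)$. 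It remains to check, \emph{one root system at a time}, that every reflective divisor in $\div(\Psi_R)$ is $\widetilde{\Orth}^+(M)$-equivalent to one of the $\cD_{v_r}(M)$. I expect this comparison of orbits of reflective vectors in $M^\vee$ — together with the identification in Section~\ref{section:Conway} of $\Psi_R$ with a twisted denominator identity of the fake monster algebra, which is what pins down $\div(\Psi_R)$ — to be the main obstacle; it rests on the detailed description of the eight lattices $\underline R$ obtained in Section~\ref{sec:5-thetablocks}.

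Granting the inclusion, $G(\vartheta_R)/\Psi_R$ is holomorphic and transforms as a modular form of weight $0$ for $\widetilde{\Orth}^+(M)$ (with the quotient character), hence is constant by \cite[Corollary~3.3]{Bor95}; since $G(\vartheta_R)\neq 0$ this constant is nonzero, so the characters of $G(\vartheta_R)$ and $\Psi_R$ agree, and comparing the first Fourier--Jacobi coefficients (both equal to $\vartheta_R$) shows the constant is $1$. Thus $G(\vartheta_R)=\Psi_R=B(\psi_R)$. Finally, writing out the Fourier--Jacobi expansion of $B(\psi_R)$ as in Remark~\ref{rmk:ShapeOfBorcherds} and matching the coefficient of $e(2\omega)$ with $\vartheta_R|T_-(2)$ from the Gritsenko expansion yields $-\vartheta_R\,\psi_R=\vartheta_R|T_-(2)$, i.e.\ $\psi_R=-\vartheta_R|T_-(2)/\vartheta_R$; therefore $G(\vartheta_R)=B\bigl(-\vartheta_R|T_-(2)/\vartheta_R\bigr)$, as claimed. (Consistently, $\Theta_{f(0,\ast)}=\vartheta_R$ forces $f(0,0)=n$ and $f(0,\pm\ell_r)=1$ for $r\in R^+$, and then $C=\tfrac1{2\rk(L)}\sum_{\ell\in L^\vee}f(0,\ell)(\ell,\ell)=\tfrac1n\sum_{r\in R^+}(\ell_r,\ell_r)=\tfrac1n\tr\bigl(A(A^{T}A)^{-1}A^{T}\bigr)=1$, where $A=(\gamma_{r,f})_{r\in R^+,\,f\in F}$.)
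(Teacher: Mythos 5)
Your overall strategy coincides with the paper's: show $\div(\Psi_R)\subseteq\div(G(\vartheta_R))$, conclude the quotient is a (nonzero) constant by the singular-weight bound, and read off the input $\varphi=-\vartheta_R|T_-(2)/\vartheta_R$ from the second Fourier--Jacobi coefficient as in Remark \ref{rmk:ShapeOfBorcherds}. The endgame and the consistency check $C=1$ are fine. However, the step you yourself flag as ``the main obstacle'' --- matching every reflective divisor of $\Psi_R$ with one of the divisors $\cD_{v_r}$ coming from the theta factors --- is precisely the content of the paper's proof, and the route you sketch for it does not close. Working with $\widetilde{\Orth}^+(M)$-orbits is too fine: by the Eichler criterion these orbits of reflective vectors of fixed norm are indexed by their images in $D(M)$, and the root vectors $\ell_r=r/h$ hit only a small, explicitly computable subset of the classes $\gamma$ occurring in the principal part of $F$ (e.g.\ for $A_4$ the $20$ roots give at most $20$ classes, while the set $R_{1/5}$ of order-$5$ classes with $q=1/5$ can be larger). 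The paper circumvents this by first proving that $G(\vartheta_R)$ is modular for the \emph{full} group $\Orth^+(M)$ (Lemma \ref{lem:ModularFullGroup}, via invariance of the associated vector-valued form under $\Orth(D)$, Lemma \ref{lem:Weilinvariants}) and that $\Orth^+(M)$ acts transitively on the reflective vectors of given norm and order (Lemma \ref{lem:transitiveOnRoots}); only then does one representative per type suffice. Your proposal contains neither ingredient.

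Moreover, for $R=B_2\oplus G_2$ your method fails outright for one orbit: $\Psi_R$ vanishes along $\cD_v$ for $v$ of norm $-1$ mapping to the order-$2$ element $x_2\in D$, but no positive root $r$ of $B_2\oplus G_2$ has $\ell_r=r/h$ of order $2$ in $D(L_{\text{ev}})$ (the orders are $3,6$ for $B_2$ and $4,12$ for $G_2$; see Remark \ref{rmk:Roots}), so this divisor is not the vanishing locus of any theta factor and is invisible to Remark \ref{rmk:ZerosOfGritsenko}. The paper handles it by a separate argument: taking $v=(0,1,r,1,0)$ for a short root $r$ of $B_2$ and showing that the reflection $\sigma$ in $v^\perp$ satisfies $\sigma(\theta_R)=-\theta_R$ on the level of vector-valued modular forms, whence $\sigma(G(\vartheta_R))=-G(\vartheta_R)$ and $G(\vartheta_R)$ vanishes on $\cD_v$. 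Without the full-group modularity, the transitivity lemma, and this anti-invariance argument, the divisor inclusion --- and hence the theorem --- is not established.
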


We first determine those root systems for which $\vartheta_R$ has vanishing order $1$ in $q$.
\begin{prp}\label{prop:rootSystems}
Let $R$ be a root system such that $\vartheta_R$ has $q$-order $1$. Then $R$ is one of the following root systems.\[
\renewcommand{\arraystretch}{1.2}
\begin{array}{c|c}
\text{weight} & \text{root systems} \\ \hline
& \\[-4mm]
2            & A_4,  A_1\oplus B_3, A_1\oplus C_3, B_2 \oplus G_2 \\
3            &  3 A_2, 3A_1\oplus A_3, 2A_1\oplus A_2\oplus B_2\\
4            & 8A_1
\end{array}
\]
\end{prp}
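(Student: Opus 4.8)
The plan is to translate the condition "$\vartheta_R$ has $q$-order $1$" into an explicit numerical condition on the root system $R$ and then run through the classification of root systems. Recall that $\vartheta_R=\eta^{n-N}\prod_{r\in R^+}\vartheta(\tau,\cdot)$ where $n=\rk(R)$ and $N=|R^+|$ is the number of positive roots. Using $\eta(\tau)=q^{1/24}(1+O(q))$ and $\vartheta(\tau,z)=q^{1/8}(\ldots)$, the total power of $q$ in the leading term is $\tfrac{1}{24}(n-N)+\tfrac{1}{8}N=\tfrac{n}{24}+\tfrac{N}{12}$. Hence $\vartheta_R$ has $q$-order $1$ precisely when $\tfrac{n}{24}+\tfrac{N}{12}=1$, i.e.
\[
n+2N=24.
\]
(One must also check that the leading coefficient does not vanish — but since each factor $\vartheta_a/\eta$ and each $\eta$ has a nonzero leading term, and $\vartheta_R$ is a nonzero holomorphic Jacobi form, the $q$-order is exactly this number; I would remark this briefly.)

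Next I would reduce to the irreducible case. If $R=R_1\oplus\cdots\oplus R_s$ with the $R_i$ irreducible of rank $n_i$ and $N_i$ positive roots, then $n=\sum n_i$, $N=\sum N_i$, so $n+2N=\sum_i(n_i+2N_i)$. For an irreducible root system one has $N_i\geq n_i$ (equality only for type $A_1$), and more precisely $N_i=n_ih_i/2$ where $h_i$ is the Coxeter number; thus $n_i+2N_i=n_i(1+h_i)$. So the equation becomes $\sum_i n_i(1+h_i)=24$. I would then tabulate $n_i(1+h_i)$ for each irreducible type: $A_1$ gives $1\cdot 3=3$; $A_2, B_2{=}C_2, G_2$ give $2\cdot 4=8$, $2\cdot 5=10$, $2\cdot 7=14$; $A_3, B_3{=}C_3$ give $3\cdot 5=15$, $3\cdot 7=21$; $A_4,B_4,C_4,D_4,F_4$ give $5\cdot 5{=}25, 3\cdot7{=}\ldots$ — actually $A_4$ gives $4\cdot 6=24$, $B_4/C_4$ give $4\cdot 9=36$, $D_4$ gives $4\cdot 7=28$, $F_4$ gives $4\cdot 13=52$; and everything of rank $\geq 5$ or the exceptional types $E_6,E_7,E_8$ clearly exceeds $24$. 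So each summand lies in $\{3,8,10,14,15,21,24,28,\ldots\}$, and I need to write $24$ as a sum of such numbers. The only solutions are: $24$ itself ($A_4$); $14+10$ ($G_2\oplus B_2$); $21+3$ ($B_3\oplus A_1$, and note $C_3\oplus A_1$ also gives $21+3$); $15+3+3+3$ ($A_3\oplus 3A_1$); $10+8+3+3$ ($B_2\oplus A_2\oplus 2A_1$); $8+8+8$ ($3A_2$); and $3+3+\cdots+3$ eight times ($8A_1$). Collecting these and sorting by weight $k=n/2$ (which from $n+2N=24$ is determined by $N$, equivalently $n=24-2N$, so larger $N$ means smaller $k$) reproduces exactly the table in the statement.

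The only real bookkeeping obstacle is making sure the enumeration of ways to write $24$ as an ordered-then-unordered sum from the multiset $\{3,8,10,14,15,21,24\}$ (the finitely many admissible irreducible contributions below the bound) is complete; this is a short finite check once one observes that any irreducible component of rank $\geq 5$, or of type $B_4/C_4/D_4/F_4$ or $E_{6,7,8}$, already contributes more than $24$ and so cannot occur, and that $B_2=C_2$ and $B_3,C_3$ (resp.) have the same Coxeter number so are indistinguishable at the level of $n+2N$ but yield genuinely different lattices $\underline{R}$, which is why both $A_1\oplus B_3$ and $A_1\oplus C_3$, and the two realizations of the rank-$6$ weight-$3$ case, appear. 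I would present the argument as: (i) derive $n+2N=24$; (ii) reduce to $\sum_i n_i(1+h_i)=24$ over irreducible components; (iii) list admissible irreducible contributions; (iv) solve the partition problem; (v) read off the table.
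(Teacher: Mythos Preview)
Your proof is correct and follows the same approach as the paper: derive the condition $n+2N=24$ from the $q$-orders of $\eta$ and $\vartheta$, then decompose into irreducible components and check the finitely many possibilities by hand. Your use of the Coxeter number identity $N_i=n_ih_i/2$ to rewrite the equation as $\sum_i n_i(1+h_i)=24$ is a clean way to organize the enumeration that the paper leaves as ``checked by hand''; the only slip is the aside about ``two realizations of the rank-$6$ weight-$3$ case''---since $B_2\cong C_2$, the three weight-$3$ entries come from three genuinely different partitions, not a $B/C$ ambiguity.
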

\begin{proof}
Since $\eta$ has $q$-order $1/24$ and $\vartheta$ has $q$-order $1/8$, the function $\vartheta_R$ has $q$-order $n/24+N/12$. Therefore, we obtain the condition $n+2N=24$. We see that $n$ and $N$ are bounded. The root system $R$ can be decomposed into irreducible root systems of type $A_n$, $B_n$, $C_n$, $D_n$, $E_6$, $E_7$, $E_8$, $F_4$ and $G_2$, so there are only finitely many possibilities for $R$ which can be checked by hand.
\end{proof}
Specializing these Jacobi forms of lattice index yields the infinite series of theta blocks with $q$-order $1$ given in Table \ref{table}.
\begin{table}[h]\caption{Theta blocks of $q$-order 1}\label{table}
 \[
\renewcommand{\arraystretch}{1.2}
\begin{array}{c|c|c}
\text{weight} & \text{root system} & \text{theta block} \\ \hline
& \\[-4mm]
2&  A_4                                          & \eta^{-6}\vartheta_{a}\vartheta_{b}\vartheta_{c}\vartheta_{d}
\vartheta_{a+b}\vartheta_{b+c}\vartheta_{c+d}
\vartheta_{a+b+c}\vartheta_{b+c+d}\vartheta_{a+b+c+d}\\
& A_1\oplus B_3                          & \eta^{-6}\vartheta_{a}\vartheta_{b}\vartheta_{b+c}\vartheta_{b+2c+2d}
\vartheta_{b+c+d}\vartheta_{b+c+2d}\vartheta_{c}
\vartheta_{c+d}\vartheta_{c+2d}\vartheta_{d} \\
& A_1\oplus C_3                          & \eta^{-6}\vartheta_{a}\vartheta_{b}\vartheta_{2b+2c+d}\vartheta_{b+c}
\vartheta_{b+2c+d}\vartheta_{b+c+d}\vartheta_{c}
\vartheta_{2c+d}\vartheta_{c+d}\vartheta_{d}\\
& B_2\oplus G_2                          & \eta^{-6}\vartheta_{a}
\vartheta_{a+b}\vartheta_{a+2b}\vartheta_{b}\vartheta_{c}\vartheta_{3c+d}\vartheta_{3c+2d}\vartheta_{2c+d}
\vartheta_{c+d}\vartheta_{d} \\ \hline
3& 3A_2                                  & \eta^{-3}\vartheta_{a_1}\vartheta_{a_1+b_1}\vartheta_{b_1}\vartheta_{a_2}\vartheta_{a_2+b_2}              
\vartheta_{b_2} \vartheta_{a_3}\vartheta_{a_3+b_3}\vartheta_{b_3}   \\      
&3A_1\oplus A_3                      & \eta^{-3}\vartheta_{a_1}\vartheta_{a_2}\vartheta_{a_3}\vartheta_{a_4}\vartheta_{a_5}              
\vartheta_{a_6} \vartheta_{a_4+a_5}\vartheta_{a_5+a_6}\vartheta_{a_4+a_5+a_6}   \\    
&2A_1\oplus A_2\oplus B_2   & \eta^{-3}\vartheta_{a_1}\vartheta_{a_2}\vartheta_{a_3}\vartheta_{a_3+a_4}\vartheta_{a_4}              
\vartheta_{a_5} \vartheta_{a_5+a_6}\vartheta_{a_5+2a_6}\vartheta_{a_6}  \\ \hline                                                   
4& 8A_1                                  & \vartheta_{a_1} \vartheta_{a_2}\vartheta_{a_3}\vartheta_{a_4}\vartheta_{a_5}\vartheta_{a_6}     
\vartheta_{a_7}\vartheta_{a_8}
\end{array}
\]
\end{table}
\begin{remark}
As explained above, we prove the theta block conjecture in the case of a pure theta block $\Theta$ obtained by specializing one of the functions $\vartheta_R$. In fact, every pure theta block of $q$-order $1$ has weight less than $12$ and every pure theta block of weight $4\leq k\leq 11$ is of the form 
\[
\eta^{3t}\prod_{j=1}^{8-t}\vartheta_{a_j}, \quad 0\leq t \leq 7,
\]
and is therefore related to the infinite family of type $8A_1$, so the theta block conjecture is true for weights $k\geq 4$ without the condition that $\Theta$ is a specialization of some $\vartheta_R$ (see \cite[Theorem 8.2]{GPY}).  However,  for weights $2$ and $3$ there do exist theta blocks of $q$-order $1$ not in any of the families given in Table \ref{table},  such as the weight $2$ theta blocks of index $49$ in \cite[Table 7]{GSZ} and the weight $3$ theta block $\vartheta_2^2\vartheta_3^5\vartheta_5\vartheta_6/\eta^{3}$ .
\end{remark}
If the root system $R$ is irreducible, there is also the following description of the lattice $\underline{R}$, which will be more useful for our purposes  (cf. \cite[Section 10]{GSZ}).

Let $R^\vee$ be the dual root system of $R$, i.e.
\[
R^\vee=\left\{ \frac{2}{(r,r)}r: r\in R \right\}.
\] 
The weight lattice of $R^\vee$ is
\[
\Lambda(R^\vee)=\{ v\in \Q\otimes R: (v,r)\in \Z, \; \forall \; r\in R \}. 
\]
With the definition $h=\frac{1}{n}\sum_{r\in R^+}(r,r)$, the identity 
\[
\sum_{r\in R^+}(r,\mathfrak{z})^2=h(\mathfrak{z},\mathfrak{z})
\]
holds. Let $\{w_f\}_{f\in F}$ denote the fundamental weights of $R^\vee$, i.e. the dual basis of $F$. We let $L$ be the integral lattice $\Lambda(R^\vee)(h)$, i.e. $L$ is the $\Z$-module $\Lambda(R^\vee)$ with bilinear form $\langle v,w\rangle=h(v,w)$. Then $v\mapsto \sum v_fw_f$, $v=(v_f)_{f\in F}\in \Z^F$ defines an isomorphism between $\underline{R}$ and $L$. The function $\vartheta_R$ then takes the form
\[
\vartheta_R(\tau,\mathfrak{z})=\eta(\tau)^{n-N}\prod_{r\in R^+}\vartheta(\tau,\langle r/h,\mathfrak{z}\rangle)
\]
for all $\mathfrak{z}\in \C\otimes L$.

If $R$ is reducible, we can decompose $R$ into a direct sum of irreducible root systems and the lattice $\underline{R}$ is then isomorphic to the direct sum of the corresponding lattices $L$.

The following table gives the lattice $L$ and its maximal even sublattice $L_{\text{ev}}$ for all root systems $R$ from Proposition \ref{prop:rootSystems}. We also list the genus of $L_{\text{ev}}$.  We refer to \cite{CS99} or \cite[\S 3]{Scheithauer-Class} for the description of genera of lattices.

\[
\renewcommand{\arraystretch}{1.2}
\begin{array}{c|c|c|c|c}
\text{weight}  & R & L & L_{\text{ev}} & \text{ genus of } L_{\text{ev}}\\ \hline
& \\[-4mm]
2&  A_4                              & A_4^\vee(5)           & A_4^\vee(5) & \II_{4,0}(5^{+3})\\
&  A_1\oplus B_3                & \Z\oplus \Z^3(5)         &  L_4 & \II_{4,0}(2_{\II}^{+2}5^{+3})\\
&   A_1\oplus C_3               &\Z\oplus A_3^\vee(8)         & A_1(2)\oplus A_3^\vee(8) & \II_{4,0}(2_3^{-1}4_1^{+1}8_{\II}^{-2})\\
&   B_2\oplus G_2                & \Z^2(3)\oplus A_2(4)       & 2A_1(3) \oplus A_2(4 ) & \II_{4,0}(2_6^{+2}4_{\II}^{-2}3^{-3})\\  \hline
3&  3A_2                              & 3A_2          &  3A_2 & \II_{6,0}(3^{-3}) \\                                               
&  3A_1\oplus A_3                & \Z^3\oplus A_3^\vee(4)       & S_6 & \II_{6,0}(2_6^{+2}4_{\II}^{-2})\\    
&  2A_1\oplus A_2\oplus B_2  &\Z^2\oplus A_2\oplus \Z^2(3)    & L_6 & \II_{6,0}(2_{\II}^{+2}3^{-3})\\   \hline                                               
4&  8A_1                                  & \Z^8      & D_8 & \II_{8,0}(2_{\II}^{+2})
\end{array}
\]
where $L_4$, $S_6$ and $L_6$ have the following Gram matrices:
\[
L_4=\left(\begin{array}{cccc}
4 & 2 & 2 & 2 \\ 
2 & 6 & 1 & 1 \\ 
2 & 1 & 6 & 1 \\ 
2 & 1 & 1 & 6
\end{array}  \right),
\]
\begin{align*}
S_6=\left( \begin{array}{cccccc}
2 & 0 & 1 & 1 & 1 & 0 \\ 
0 & 2 & 1 & 1 & 1 & 0 \\ 
1 & 1 & 4 & 2 & 2 & 3 \\ 
1 & 1 & 2 & 4 & 0 & 1 \\ 
1 & 1 & 2 & 0 & 4 & 1 \\ 
0 & 0 & 3 & 1 & 1 & 4
\end{array} \right), \qquad
L_6=\left( \begin{array}{cccccc}
4 & 2 & 0 & 0 & -2 & 0 \\ 
2 & 4 & 0 & 0 & -1 & 0 \\ 
0 & 0 & 2 & -1 & 0 & 0 \\ 
0 & 0 & -1 & 2 & 0 & 0 \\ 
-2 & -1 & 0 & 0 & 2 & 1 \\ 
0 & 0 & 0 & 0 & 1 & 4
\end{array} \right).
\end{align*}

For every $L_\text{ev}$ in the above table, its genus contains only one class. Thus the lattice $M=2U\oplus L_{\text{ev}}(-1)$ has only one model splitting two hyperbolic planes.  We view the Gritsenko lifts and Borcherds products in Theorem  \ref{thm:mainThm} as  orthogonal modular forms on $2U\oplus L_{\text{ev}}(-1)$.

\begin{remark}\label{rmk:Roots}
    Suppose once again that $R$ is an irreducible root system. The function $\vartheta_R\colon \HH\times (\C\otimes L)\to \C$ is a Jacobi form of index $L$ and hence also a Jacobi form of index $L_{\text{ev}}$. Since the Jacobi theta function $\vartheta(\tau,z)$ vanishes for $z\in \Z\tau+\Z$, we see that $\vartheta_R$ vanishes along the divisor
    \[
   \{(\tau,\mathfrak{z})\in \HH\times (\C\otimes L_{\text{ev}}) : \langle r/h,\mathfrak{z}\rangle \in \Z\tau+\Z \},
   \]
   where $r$ is a positive root in $R$. The element $\lambda=r/h$ is an element of $L^\vee\subset L_{\text{ev}}^\vee$ and $\langle \lambda,\lambda\rangle=(r,r)/h$. In the following table we list the order of $\lambda$ in $L^\vee/L$ and in $L_{\text{ev}}^\vee/L_{\text{ev}}$ and the norm $\langle \lambda,\lambda\rangle$. This depends on whether $r$ is a long or a short root of $R$.

 \[
\renewcommand{\arraystretch}{1.2}
\begin{array}{c|c|c|c|c}
R  & \text{long or short root} & \ord(\lambda)\text{ in } L^\vee/L & \ord(\lambda) \text{ in }  L_{\text{ev}}^\vee/L_{\text{ev}} & \langle \lambda,\lambda\rangle\\ \hline
& \\[-4mm]
 A_1 &                             & 1           & 2 & 1\\
 A_n (n>1)&              & n+1         &  n+1 & 2/(n+1) \\
 B_n (n>1)&   \text{ short root }               & 1+2(n-1)         & 2+4(n-1) & 1/(1+2(n-1))\\
          & \text{ long root } & 1+2(n-1) & 1+2(n-1) & 2/(1+2(n-1)) \\
C_n (n>2)&   \text{ short root }     & 4+2(n-1)        & 4+2(n-1) & 2/(4+2(n-1))\\
&  \text{ long root }                               & 2+(n-1)        &  2+(n-1) & 2/(2+(n-1)) \\                                               
G_2&  \text{ short root }                & 12       & 12 & 1/6\\    
&  \text{ long root }  & 4    & 4 & 1/2
\end{array}
\]
\end{remark}

\section{Borcherds products related to Conway's group}\label{section:Conway}

Let $R$ be one of the root systems from the previous section and let $L$ and $L_{\text{ev}}$ as before. Let $M=2U\oplus L_\text{ev}(-1)$. We show that in all of these cases, there exists a strongly-reflective Borcherds product $\Psi_R$ of singular weight on $M$, which can be constructed explicitly as described in the following theorem.
\begin{thm}\label{thm:ReflectiveProducts}
    Let $R$ be one of the root systems from Proposition \ref{prop:rootSystems} and $M=2U\oplus L_{\text{ev}}(-1)$. There exists a strongly-reflective modular form $\Psi_R$ of singular weight for the full modular group $\Orth^+(M)$. This function is the Borcherds product corresponding to the following vector-valued modular form $F$ for the Weil representation $\rho_D$ associated to $D=D(L_\text{ev})$.
    \[
    \renewcommand{\arraystretch}{1.2}
    \begin{array}{c|c|c}
    \text{weight}  & R & F \\ \hline
    & \\[-4mm]
    2&  A_4                              & F_{\Gamma_0(5),5\eta_{1^{-5}5^1},0}\\
    &  A_1\oplus B_3                & F_{\Gamma_0(10),\eta_{1^{-1}2^{-2}5^{-3}10^2},0} \\
    &   A_1\oplus C_3               & F_{\Gamma_0(8),2\eta_{1^{-2}2^{-1}4^{-3}8^2},0}+F_{\Gamma_0(8),8\eta_{1^{-6}2^14^{-5}8^6}+{\eta_{1^{-2}2^{-1}4^{-3}8^2}},D^4}\\
    &   B_2\oplus G_2                & F_{\Gamma_0(12),\eta_{1^{-1}3^{-1}4^{-2}6^{-2}12^2},0}+F_{\Gamma_0(12),\eta_{1^{-4}4^16^{-2}12^1},D^6}\\ [1mm]  \hline
    3&  3A_2                              & F_{\Gamma_0(3),9\eta_{1^{-9}3^3},0}\\                                               
    &  3A_1\oplus A_3                &  F_{\Gamma_0(4),4\eta_{1^{-4}2^{-6}4^{4}},0}+F_{\Gamma_0(4),-2\eta_{1^{-4}2^{-6}4^4},D^2}\\ 
    &  2A_1\oplus A_2\oplus B_2  &   F_{\Gamma_0(6),\eta_{1^{-1}2^{-4}3^{-5}6^4},0} \\  [1mm]  \hline                                               
    4&  8A_1                                  &  F_{\Gamma_0(2),16\eta_{1^{-16}2^8},0}
    \end{array}
    \]   
     
    We write $F=\sum_{\gamma\in D}F_\gamma\textbf{e}_\gamma$. If the level $N$ of $M$ is square-free, then the Fourier expansion of $F_\gamma$ is given by
    \[
    F_\gamma=
    \begin{cases}
    \rk(R) +O(q) &\text{ if } \gamma=0, \\
    q^{-1/d}+O(q^{1-1/d})  & \text{ if } \ord(\gamma)=d \text{ and } q(\gamma)=1/d\mod 1 \text{ for a divisor } d>1 \text{ of } N, \\ 
    O(1) & \text{ in all other cases.}
    \end{cases}    
    \]
    
    The cases where $N$ is not square-free are $R=A_1\oplus C_3$, $B_2\oplus G_2$ and $3A_1\oplus A_3$. 
    
    In the case $R=A_1\oplus C_3$, the discriminant form $D$ is given by $D=2_3^{-1}4_1^{+1}8_{\II}^{-2}$. We let $x_2$ and $x_4$ be the unique elements of order $2$ in $2_3^{-1}$ and $4_1^{+1}$. Then the Fourier expansion of $F_\gamma$ is given by    
    \[
    F_\gamma=
    \begin{cases}
    4+O(q) &\text{ if } \gamma=0, \\
    q^{-1/2}+O(q^{1/2})  & \text{ if } \gamma=x_4,\\ 
    q^{-1/4}+O(q^{3/4}) & \text{ if } \gamma\in \{x_2+2\delta : \delta\in D, 2q(\delta)+b(x_2,\delta)=3/4\mod 1\}, \\
    q^{-1/8}+O(q^{7/8}) & \text{ if } \ord(\gamma)=8 \text{ and } q(\gamma)=1/8\mod 1, \\
    O(1) & \text{ in all other cases.}
    \end{cases}    
    \]
    
    In the case $R=B_2\oplus G_2$, the discriminant form $D$ is given by $D=2_6^{+2}4_{\II}^{-2}3^{-3}$. We let $x_2$ be the unique element with $q(x_2)=1/2\mod 1$ in $2_{6}^{+2}$. Then the Fourier expansion of $F_\gamma$ is given by    
    \[
    F_\gamma=
    \begin{cases}
    4+O(q) &\text{ if } \gamma=0, \\
    q^{-1/2}+O(q^{1/2})  & \text{ if } \gamma=x_2,\\ 
    q^{-1/d}+O(q^{1-1/d}) & \text{ if } \ord(\gamma)=d \text{ and } q(\gamma)=1/d\mod 1 \text{ for some } d\in\{3,4,12\}, \\ 
    q^{-1/6}+O(q^{5/6}) & \text{ if } \gamma\in\{x_2+\delta : \delta\in 3^{-3}, q(\delta)=2/3\mod 1\}, \\
    O(1) & \text{ in all other cases.}
    \end{cases} 
    \]   
    
    In the case $R=3A_1\oplus A_3$, the discriminant form $D$ is given by $D=2_6^{+2}4_{\II}^{-2}$. We let $x_2$ be the unique element with $q(x_2)=1/2\mod 1$ in $2_{6}^{+2}$. Then the Fourier expansion of $F_\gamma$ is given by    
    \[
    F_\gamma=
    \begin{cases}
    6+O(q) &\text{ if } \gamma=0, \\
    q^{-1/2}+O(q^{1/2})  & \text{ if } \gamma=x_2,\\ 
    q^{-1/4}+O(q^{3/4}) & \text{ if } \ord(\gamma)=4 \text{ and } q(\gamma)=1/4\mod 1, \\ 
    O(1) & \text{ in all other cases.}
    \end{cases}       
    \]
\end{thm}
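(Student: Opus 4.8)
\textbf{Plan of proof for Theorem \ref{thm:ReflectiveProducts}.}
The plan is to proceed root system by root system, since there are only eight cases, but to organize the work around three reductions that apply uniformly. First, I would recognize that the scalar-valued functions appearing in the table are eta products of weight $1-\rk(R)/2$ (a negative or zero half-integer weight for the multiplier form, but after the lifting construction of \cite[Theorem 3.1]{Scheithauer-ModForms} the relevant weight is $1-n/2$ with $n=\rk(L_{\text{ev}})$) on the appropriate $\Gamma_0(N)$ with character $\chi_D$. So step one is a character/weight bookkeeping check: for each $R$, verify that the listed $\eta$-quotient is a modular form of weight $1-n/2$ and character $\chi_D$ for $\Gamma_0(N)$, where $N$ is the level of $M=2U\oplus L_{\text{ev}}(-1)$ and $D=D(L_{\text{ev}})$. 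This is a finite computation using the standard transformation formulas for $\eta$-quotients (valence, behaviour at cusps, the character computation via $\sum r_\delta\delta$ and $\prod \delta^{r_\delta}$), and the isotropic set $S$ (namely $0$, $D^2$, $D^4$, $D^6$) must be checked to be isotropic and $(\Z/N\Z)^*$-invariant, so that the proposition of \cite{Scheithauer-ModForms} applies and produces a genuine vector-valued modular form $F$ for $\rho_D$.

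Second, I would compute the principal part (the $q$-coefficients with $n<0$, i.e. the singular Fourier coefficients) of each $F$ explicitly. The formula for $F_{\Gamma_0(N),f,S}$ expresses $F_\gamma$ as a sum over cosets $\Gamma_0(N)\backslash\SL_2(\Z)$ of $f|M(\tau)$ weighted by entries of $\rho_D(M^{-1})$; the negative-order terms come only from the expansion of $f$ at the various cusps. Since the chosen $\eta$-quotients have controlled pole orders at the cusps of $\Gamma_0(N)$, this reduces to evaluating finitely many expansions. The claimed shape — $\rk(R)+O(q)$ at $\gamma=0$ and $q^{-1/d}+O(q^{1-1/d})$ on the ``order $d$, norm $1/d$'' classes — is exactly what one gets, and in the square-free-level cases this is essentially a direct reading-off. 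The three non-square-free cases ($A_1\oplus C_3$, $B_2\oplus G_2$, $3A_1\oplus A_3$) require a more careful analysis of the $2$-adic (and for $B_2\oplus G_2$ also the $3$-adic) part of $D$: here one must identify the special elements $x_2,x_4$, understand how the Weil representation mixes components over the $2$-part, and sum the contributions of the two lifting terms $F_{\Gamma_0(N),f_1,0}+F_{\Gamma_0(N),f_2,D^k}$. I would verify that the constant term $c_0(0)$ equals $\rk(R)=\dim L_{\text{ev}}$ in each case, since this is what forces the Borcherds product to have weight $c_0(0)/2=n/2$, i.e. singular weight $n/2-1+1$... more precisely, the Borcherds lift via the Jacobi-form version (\cite[Theorem 4.2]{Gritsenko-Reflective}, applied after \eqref{eq:Correspondence}) produces a form of weight $c_0(0)/2=\rk(R)/2$; one then checks $\rk(R)/2$ equals the singular weight $n/2-1$ by the identity $\rk(R)=n-2$... but in our table $\rk(R)$ and $\dim L_{\text{ev}}$ coincide, so I would instead note that the Borcherds product weight $\rk(R)/2$ equals $\dim(L_{\text{ev}})/2$, and singular weight for signature $(2,\dim L_{\text{ev}}+2)$ is $(\dim L_{\text{ev}}+2)/2-1=\dim(L_{\text{ev}})/2$; hence the weight is exactly singular.

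Third, from the principal part I would read off the divisor of $\Psi_R$ using part (2) of Theorem \ref{Borcherds13.3} (equivalently the multiplicity formula in \cite[Theorem 4.2]{Gritsenko-Reflective}): the divisor is supported on $\cD_v(M)$ for $v\in M^\vee$ primitive of negative norm, with multiplicity $\sum_{d>0}c_{dv}(d^2(v,v)/2)$. Because the only negative coefficients are the ``$q^{-1/d}$ on a norm-$1/d$ vector'' ones and they all equal $1$, every such divisor has multiplicity $1$, and the corresponding $v$ has $(v,v)=-2/d$ with $dv$ primitive of the same type, so in fact $v$ itself is a root of $M^\vee$ (the reflection in $v^\perp$ preserves $M$ because $2(v,\cdot)/(v,v)=d(v,\cdot)$ takes integer values on $M$ by the order condition). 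Thus $\Psi_R$ is strongly reflective. Finally, invariance under the \emph{full} group $\Orth^+(M)$ rather than just $\widetilde{\Orth}^+(M)$ follows from the last sentence of \cite[Theorem 3.1]{Scheithauer-ModForms}: $F_{\Gamma_0(N),f,S}$ is invariant under all automorphisms of $D$ stabilizing $S$, and since each $S$ here ($0$, or $D^c$) is characteristic, it is stabilized by all of $\Orth(D)$; then $\Orth(M,F)^+=\Orth^+(M)$ and Theorem \ref{Borcherds13.3}(1) gives the claimed invariance.

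\textbf{Main obstacle.} I expect the genuinely delicate part to be the explicit principal-part computation in the three non-square-free cases. There the discriminant form has a nontrivial $2$-adic Jordan decomposition with mixed scale components ($2_3^{-1}4_1^{+1}8_{\II}^{-2}$ etc.), the lifted modular form is a \emph{sum} of two Scheithauer lifts attached to different isotropic sets, and disentangling exactly which coset $\gamma$ receives which $q^{-1/d}$ term — and checking there is no unwanted cancellation or extra negative term — requires a careful hand computation of $\rho_D(M^{-1})$ on the relevant components together with the cusp expansions of the two $\eta$-quotients. Everything else (the weight/character check, the square-free cases, the divisor-to-reflectivity deduction, and the full-group invariance) is routine once the framework is set up.
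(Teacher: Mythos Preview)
Your proposal is correct and follows essentially the same route as the paper's proof: apply Scheithauer's lifting to produce $F$, compute its principal part (the paper cites \cite[Theorem~3.2]{Scheithauer-ModForms} for the explicit formula), observe that the principal-part coefficients are non-negative integers so that Theorem~\ref{Borcherds13.3} yields a holomorphic product of weight $c_0(0)/2=\rk(R)/2$ (singular weight), note $\Orth(D)$-invariance of $F$ by construction to get modularity for the full $\Orth^+(M)$, and read off strong reflectivity from the shape of the principal part. Your identification of the non-square-free cases as the computational bottleneck is accurate, and your explicit justification that the divisorial $v$'s are genuine roots (via $2(v,\cdot)/(v,v)=d(v,\cdot)$ being $\Z$-valued on $M$) makes explicit what the paper defers to \cite[Section~9]{Scheithauer-Class}.
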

\begin{proof}
    The Fourier expansion of $F$ can be calculated using the formula in \cite[Theorem 3.2]{Scheithauer-ModForms}. The Fourier coefficients of the principal part of $F$ are non-negative integers, so Theorem \ref{Borcherds13.3} yields a holomorphic Borcherds product $\Psi_R$. We note that $F$ is invariant under $\Orth(D)$ by construction, hence the modular form $\Psi_R$ is modular for the full group $\Orth^+(M)$. In all cases, the constant coefficient of $F_0$ is given by $\rk(R)$, so $\Psi_R$ has weight $\rk(R)/2$, which is the singular weight. The divisor of $\Psi_R$ is determined by the principal part of $F$. In all cases, the only contributions to the principal part are terms of the form $q^{-1/d}$ in components $F_\gamma$ with $\gamma\in D$ of order $d$ and $q(\gamma)=1/d\mod 1$ for some divisor $d$ of $N$. This implies that $\Psi_R$ is strongly-reflective (cf. \cite[Section 9]{Scheithauer-Class}).
\end{proof}

\begin{remark}
    The cases where the level of $M$ is square-free can be found in the table at the end of Section 10 in \cite{Scheithauer-Class}. We reconstruct them at the standard 1-dimensional cusp in the above theorem.
\end{remark}
Borcherds conjectured that each conjugacy class of the automorphism group of the Leech lattice with non-trivial fixed point lattice corresponds to a holomorphic Borcherds product of singular weight. This is proved for classes of square-free level in \cite{Scheithauer-KacMoody} and \cite{Scheithauer-Class}. The general case is treated in \cite{Scheithauer-Moonshine}. We show that our Borcherds products $\Psi_R$ also fit into this picture. We first apply an Atkin-Lehner involution to the lattice $M$.
\begin{prp}
Let $R$ be one of the root systems from Proposition \ref{prop:rootSystems} and let $M=2U\oplus L_{\text{ev}}(-1)$. Let $N$ be the level of $M$. The lattice 
\[
W_N(M)=\sqrt{N}\left(M^\vee\cap \frac{1}{N}M\right)\subset \R\otimes M
\]
can be written as $U\oplus U(N)\oplus \tilde{L}$ for a negative definite lattice $\tilde{L}$. Let $\Lambda$ be the Leech lattice. There exists an element $g$ in Conway's group $\Co_0=\Orth(\Lambda)$ such that $\tilde{L}(-1)$ is isomorphic to $\Lambda_g$, where $\Lambda_g$ is the lattice of all vectors in $\Lambda$ that are fixed by $g$. In the following table we list $g$ and the genus of $\Lambda_g$.
\[
\renewcommand{\arraystretch}{1.2}
\begin{array}{c|c|c|c}
R  & \text{ class of } g & \text{ cycle shape of } g & \text{ genus of } \Lambda_g\\ \hline

A_4                              & 5C & 1^{-1}5^5 & \II_{4,0}(5^{+3}) \\
A_1\oplus B_3                &  -10D & 1^{-2}2^35^210^1 & \II_{4,0}(2_{\II}^{-4}5^{-3}) \\
A_1\oplus C_3               & -8E & 1^{-2}2^34^18^2 & \II_{4,0}(2_3^{-1}4_1^{+1}8_{\II}^{-2})\\
B_2\oplus G_2                & -12I & 1^{-2}2^23^24^112^1 & \II_{4,0}(2_2^{+2}4_{\II}^{-2}3^{+3})\\  \hline
3A_2                              & 3C & 1^{-3}3^9 & \II_{6,0}(3^{+5}) \\                                               
3A_1\oplus A_3                &  -4C & 1^{-4}2^64^4 & \II_{6,0}(2_6^{+2}4_{\II}^{+4})\\    
2A_1\oplus A_2\oplus B_2  &   -6C & 1^{-4}2^53^46 & \II_{6,0}(2_{\II}^{-6}3^{-5})\\\hline                                               
8A_1                                  &  -2A & 1^{-8}2^{16} & \II_{8,0}(2_{\II}^{+8})
\end{array}
\] 
\end{prp}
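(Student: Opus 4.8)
The statement will be proved by a case-by-case analysis of the eight root systems of Proposition~\ref{prop:rootSystems}. For each $R$ I would first compute $W_N(M)$ as a lattice, and then show it is isometric to $U\oplus U(N)\oplus\Lambda_g(-1)$ for the conjugacy class $g$ named in the table. Since $M^\vee\cap\frac1N M$ is formed coordinate-wise on orthogonal direct sums, and since the level of $M=2U\oplus L_{\text{ev}}(-1)$ equals the level $N$ of $L_{\text{ev}}$, the computation reduces to $W_N$ on $U$ and on $L_{\text{ev}}(-1)$: from $U^\vee=U\subseteq\frac1N U$ one gets $W_N(U)\cong U(N)$, and from $NL_{\text{ev}}^\vee\subseteq L_{\text{ev}}$ (equivalently $L_{\text{ev}}^\vee\subseteq\frac1N L_{\text{ev}}$) one gets $W_N(L_{\text{ev}}(-1))\cong(\sqrt N\,L_{\text{ev}}^\vee)(-1)$, the even positive definite lattice on the group $L_{\text{ev}}^\vee$ whose Gram matrix is $N$ times the inverse of a Gram matrix of $L_{\text{ev}}$. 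Hence
\[
W_N(M)\;\cong\;2U(N)\oplus(\sqrt N\,L_{\text{ev}}^\vee)(-1),
\]
and one writes down $\sqrt N\,L_{\text{ev}}^\vee$ explicitly in each case (for instance $\sqrt5\,(A_4^\vee(5))^\vee\cong A_4$).

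Next I would compare genera. The lattice $W_N(M)$ has signature $(2,2+\rk(R))$ and discriminant form $q_{U(N)}^{\oplus2}\oplus(-q_{\sqrt N L_{\text{ev}}^\vee})$, which I would evaluate directly from the Gram matrix above (or using results of Scheithauer \cite{Scheithauer-Class} for the action of $W_N$ on discriminant forms). On the other side, $U\oplus U(N)\oplus\Lambda_g(-1)$ has the same signature and discriminant form $q_{U(N)}\oplus(-q_{\Lambda_g})$, where $q_{\Lambda_g}$ is obtained from the classical description of the fixed lattices of the elements $g\in\Co_0$ (whose genera are listed in the last column of the table). One then checks case by case that these discriminant forms agree; the matching also singles out the conjugacy class $g$, whose cycle shape is forced by the eta product entering the definition of the input form $F$ in Theorem~\ref{thm:ReflectiveProducts}: up to the Atkin--Lehner involution on $\Gamma_0(N)$ accompanying $W_N$, this eta product is the frame-shape eta product of $g$, and all of this data is tabulated in \cite{Scheithauer-Class, Scheithauer-Moonshine}.

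Finally, $W_N(M)$ is an even indefinite lattice of rank $4+\rk(R)\in\{8,10,12\}$ whose discriminant form is generated by at most $(4+\rk(R))-2$ elements, so by Nikulin's uniqueness theorem it is the only class in its genus. Together with the previous step this gives $W_N(M)\cong U\oplus U(N)\oplus\Lambda_g(-1)$, which is the asserted decomposition with $\tilde L(-1)\cong\Lambda_g$. I expect the only genuine difficulty to lie in the $2$-adic bookkeeping for the three cases $R=A_1\oplus C_3$, $B_2\oplus G_2$, $3A_1\oplus A_3$, where $N$ is not square-free and the $2$-parts of $\sqrt N\,L_{\text{ev}}^\vee$ and of the discriminant forms have to be tracked carefully; for the remaining square-free cases the identification can simply be read off from the table at the end of Section~10 of \cite{Scheithauer-Class}.
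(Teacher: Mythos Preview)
Your proposal is correct and follows essentially the same route as the paper: both reduce to $W_N(M)\cong M^\vee(N)$ (the paper states this in one line, you derive it componentwise as $2U(N)\oplus L_{\text{ev}}^\vee(-N)$) and then finish by a genus comparison plus uniqueness in the genus, checked case by case. The only organisational difference is that the paper first verifies the splitting $U\oplus U(N)\oplus\tilde L$ for each $R$ and then matches the genus of $\tilde L(-1)$ with that of $\Lambda_g$, merely asserting that the relevant genus has a single class, whereas you skip the intermediate splitting and invoke Nikulin's uniqueness theorem directly on the full indefinite lattice; your extra remarks on cycle shapes and the Atkin--Lehner relation of the eta products are not needed for the proposition itself but correctly anticipate the paper's subsequent remarks.
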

\begin{proof}
Since $M$ has level $N$, we have that $M^\vee \subset\frac{1}{N}M$, which yields that $W_N(M)\cong M^\vee(N)$.
The statement that the lattice $W_N(M)$ is of the form $U\oplus U(N)\oplus \tilde{L}$ can be checked separately for each $R$. In all of these cases the genus of $U\oplus U(N)\oplus \Lambda_g$ contains only one class, so to finish the proof it suffices to prove that the genera of $\tilde{L}(-1)$ and $\Lambda_g$ coincide, which can be checked for each of the cases separately.
\end{proof}
\begin{remark}
In those cases where the level $N$ of $M$ is square-free, Scheithauer gave the following natural construction of $\Psi_R$. Let $K=U\oplus U(N)\oplus \Lambda_g$ and define $\eta_g$ by the cycle shape of $g$, i.e. if $g$ has characteristic polynomial $\prod (X^b-1)^{r_b}$, we define $\eta_g(\tau)=\prod \eta(b\tau)^{r_b}$. Then the scalar-valued modular form $1/\eta_g$ of weight $-\rk(\Lambda_g)/2$ can be lifted to a vector-valued modular form $F_{\Gamma_0(N),1/\eta_g,0}$ for the Weil representation $\rho_{D(K)}$. The Fourier coefficients of the principal part of this vector-valued modular form are non-negative integers, hence we obtain a holomorphic Borcherds product $\Psi_g$ on $K(-1)$. This Borcherds product $\Psi_g$ has singular weight. Its expansion at a level $N$ cusp is the twisted denominator identity of the fake monster algebra corresponding to $g$. 

The constructions of $\Psi_g$ and $\Psi_R$ are related in the following way. As explained in the previous paragraph, the function $\Psi_g$ is given by $B(F_{\Gamma_0(N),1/\eta_g,0})$. Similarly, by Theorem \ref{thm:ReflectiveProducts}, the function $\Psi_R$ is given by $B(F_{\Gamma_0(N),f,0})$ for a suitable modular form $f$. It turns out that $f$ is up to a constant given by the Atkin-Lehner involution $W_N(1/\eta_g)$. We can therefore say that $\Psi_R$ is obtained from $\Psi_g$ by taking the Atkin-Lehner involution of both the lattice $K(-1)$ and the input function $1/\eta_g$.

There is another way to see the relationship between the two modular forms. Since $K^\vee(N)=U(N)\oplus U\oplus \Lambda_g^\vee(N)$ is isomorphic to $2U\oplus L_\text{ev}$, we have
\[
\Orth^+(K(-1))=\Orth^+(K^\vee(-1))=\Orth^+(K^\vee(-N))\cong \Orth^+(M),
\]
and $\Psi_g$ can be viewed as a modular form for $\Orth^+(M)$. We then obtain $\Psi_g=\Psi_R$ by comparing their divisors or their Fourier expansions at suitable $0$-dimensional cusps.
\end{remark}
\begin{remark}
    In the three cases where the level of $M$ is not square-free, one can still construct a strongly-reflective Borcherds product $\Psi_g$ of singular weight whose expansion at a level $N$ cusp is the twisted denominator identity of the fake monster algebra corresponding to $g$. However, one has to replace the vector-valued modular form $F$ on $D(K)$ by $F=F_{\Gamma_0(N),1/\eta_g,0}+F_{\Gamma_0(N),h,D^{N/2}}$ for a suitable scalar-valued modular form $h$ (see \cite{Scheithauer-Moonshine}). After identifying $\Orth^+(K(-1))$ with $\Orth^+(M)$ we again obtain $\Psi_g=\Psi_R$.
\end{remark}

\section{Proof of the main theorem}\label{section:proof}
By comparing their divisors, we want to prove that the Borcherds product $\Psi_R$ constructed in Theorem \ref{thm:ReflectiveProducts} equals $G(\vartheta_R)$ for all root systems $R$ from Proposition \ref{prop:rootSystems}. In order to do so, we first prove that $G(\vartheta_R)$ is a modular form for the full modular group $\Orth^+(M)$. 
\begin{lem}\label{lem:ModularFullGroup}
    Let $R$ be one of the root systems from Proposition \ref{prop:rootSystems}. Then $G(\vartheta_R)$ is a modular form for the full group $\Orth^+(M)$ (possibly with a character). 
\end{lem}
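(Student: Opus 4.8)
\emph{Proof plan.} By the theorem on the Gritsenko lift quoted in Section \ref{section:OAF}, $G(\vartheta_R)$ is a holomorphic modular form of weight $\rk(R)/2$ and trivial character for $\widetilde{\Orth}^+(M)$; the plan is to upgrade this to modularity for the full group $\Orth^+(M)$ by viewing $G(\vartheta_R)$ as an additive Borcherds lift. Via the isomorphism \eqref{eq:Correspondence} one writes
\[
\vartheta_R(\tau,\mathfrak{z})=\sum_{\gamma\in D}F_\gamma(\tau)\,\Theta_\gamma^{L_{\text{ev}}}(\tau,\mathfrak{z}),\qquad F=\sum_{\gamma\in D}F_\gamma\textbf{e}_\gamma,
\]
where $D=D(L_{\text{ev}})$ and $F$ is a holomorphic modular form of weight $\rk(R)/2-\rk(L_{\text{ev}})/2=0$ for $\rho_D$ (here we use that $\vartheta_R$, being a Jacobi form of index $L$, is also one of index $L_{\text{ev}}$, cf. Remark \ref{rmk:Roots}, and that its character $\nu_\eta^{n+2N}$ is trivial because $n+2N=24$). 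Under this correspondence $G(\vartheta_R)$ equals the additive Borcherds lift of $F$, and I would invoke the additive analogue of Theorem \ref{Borcherds13.3} (see \cite[Theorem 14.3]{Bor98}): this lift is modular for $\{\sigma\in\Orth^+(M):\sigma(F)=F\}$, where $\sigma$ acts on $F$ via its image in $\Orth(D)$ and the $\Orth(D)$-action on $\C[D]$ of Section \ref{section:VVMFs}. More generally, $G(\vartheta_R)\circ\sigma$ is the additive lift of $\bar\sigma(F)$ for $\bar\sigma$ the image of $\sigma$ in $\Orth(D)$. Since $M=2U\oplus L_{\text{ev}}(-1)$ splits two hyperbolic planes, $\Orth^+(M)\to\Orth(D)$ is surjective, so it is enough to show that $\Orth(D)$ acts on $F$ by a character $\chi$; this then yields $G(\vartheta_R)\circ\sigma=\chi(\sigma)\,G(\vartheta_R)$ for all $\sigma\in\Orth^+(M)$.

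The next step is to pin down $F$ and its $\Orth(D)$-orbit. Since $\rho_D$ factors through a finite quotient of $\SL_2(\Z)$, each component $F_\gamma$ of the holomorphic weight-$0$ form $F$ is a holomorphic weight-$0$ modular form for a congruence subgroup, hence constant; so $F$ lies in the finite-dimensional space $\C[D]^{\rho_D}$ of $\rho_D$-invariants, which is stable under the commuting action of $\Orth(D)$. For each of the eight lattices $L_{\text{ev}}$ from Section \ref{sec:5-thetablocks} I would then verify, using the explicit discriminant forms, that $\Orth(D)$ fixes $F$ up to scalar --- most cleanly by checking $\dim\C[D]^{\rho_D}=\dim J_{\rk(R)/2,L_{\text{ev}}}=1$, and otherwise by writing down the (few) nonzero components of the constant vector $F$ and checking invariance by hand.

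The formal ingredients --- realizing the Gritsenko lift as an additive Borcherds lift, the surjectivity of $\Orth^+(M)\to\Orth(D)$ for $2U$-split lattices, and the fact that holomorphic weight-$0$ vector-valued modular forms have constant components --- are all standard, so I expect the genuine work to lie in the last case-by-case step. In particular, for several of the lattices $L_{\text{ev}}$ the group $\Orth(D)$ is strictly larger than the image of $\Aut(R)$, so the invariance of $F$ under these extra automorphisms is the point that really has to be checked.
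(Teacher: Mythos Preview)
Your proposal is correct and follows essentially the same approach as the paper: pass from $\vartheta_R$ to its associated weight-$0$ form $F\in\C[D]^{\rho_D}$ via \eqref{eq:Correspondence}, identify $G(\vartheta_R)$ with the additive Borcherds lift of $F$, and reduce to showing that $\Orth(D)$ acts on $F$ by a character, which is then verified case by case (the paper does this in a separate lemma, getting $\dim\C[D]^{\rho_D}=1$ in five cases and handling the remaining three $2$-dimensional cases by exploiting that $L$ is odd there). One small remark: the surjectivity of $\Orth^+(M)\to\Orth(D)$ is not actually needed --- the equivariance $\sigma(G(\vartheta_R))=G(\bar\sigma(F))$ already shows it suffices that $F$ be an eigenvector for the \emph{image} of $\Orth^+(M)$ in $\Orth(D)$, and checking the whole of $\Orth(D)$ is simply a convenient sufficient condition.
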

\begin{proof}
    Let $D=D(L_{\text{ev}})$ and let $\theta_R$ be the modular form of weight $0$ for $\rho_D$ corresponding to $\vartheta_R$ under \eqref{eq:Correspondence}. Then the Gritsenko lift of $\vartheta_R$ is the additive  Borcherds lift $\Phi_R$ of $\theta_R$. The additive Borcherds lift is constructed as an integral of the inner product of $\theta_R$ and the Siegel theta function (see \cite[Section 6]{Bor98}). The Siegel theta function is invariant under $\Orth^+(M)$. This implies that for every automorphism $\sigma\in \Orth^+(M)$, the additive lift of $\sigma(\theta_R)$ equals $\sigma(\Phi_R)$, where the action of $\sigma$ on $\theta_R$ is given by its action on $D$. Therefore, if $\theta_R$ is invariant under $\Orth(D)$ up to a character, then $\Phi_R$ is a modular form for the full group $\Orth^+(M)$ (with character given by the lift of the character of $\theta_R$ to $\Orth^+(M)$). The invariance of $\theta_R$ under $\Orth(D)$ can be checked for each of the root systems $R$. This is done in the following lemma.
\end{proof}
\begin{lem}\label{lem:Weilinvariants}
    Let $R$ be one of the root systems from Proposition \ref{prop:rootSystems} and let $D=D(L_{\text{ev}})$. Let $\theta_R$ be the modular form of weight $0$ for $\rho_D$ corresponding to one of the functions $\vartheta_R$ under \eqref{eq:Correspondence}. Then $\theta_R$ is invariant under $\Orth(D)$ up to a character of order $2$.
\end{lem}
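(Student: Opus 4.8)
The plan is to exhibit, for each of the eight root systems $R$ from Proposition~\ref{prop:rootSystems}, an explicit generating set of $\Orth(D)$ and to verify invariance of $\theta_R$ (equivalently, of the Jacobi form $\vartheta_R$ viewed under \eqref{eq:Correspondence}) on these generators, keeping track of a possible sign. Since $\theta_R$ and $\vartheta_R$ determine each other, it suffices to work on the Jacobi side: $\Orth(D)$ acts on $\C\otimes L_{\text{ev}}$ through isometries preserving $L_{\text{ev}}$ modulo its scaling, and the action on $\theta_R$ is read off from how these isometries permute the theta components $\Theta^{L_\text{ev}}_\gamma$. Concretely, $\vartheta_R(\tau,\mathfrak z)=\eta(\tau)^{n-N}\prod_{r\in R^+}\vartheta(\tau,\langle r/h,\mathfrak z\rangle)$, so any isometry $\sigma$ of $L_{\text{ev}}$ that permutes the set $\{\pm r/h : r\in R^+\}\subset L^\vee\subset L_{\text{ev}}^\vee$ sends $\vartheta_R(\tau,\sigma^{-1}\mathfrak z)$ to $\pm\vartheta_R(\tau,\mathfrak z)$, the sign coming from the parity of the number of roots sent to their negatives (since $\vartheta(\tau,-z)=-\vartheta(\tau,z)$). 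Thus the core task is: (i) identify $\Orth(D)$, (ii) lift its elements to isometries of $L_{\text{ev}}$ (or of $L$) permuting $\{\pm r/h\}$, and (iii) compute the resulting sign character and check it has order dividing~$2$.

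\textbf{Key steps.} First I would reduce, using the decomposition of a reducible $R$ into irreducible components and the corresponding orthogonal decomposition $L_{\text{ev}}=\bigoplus L_{\text{ev},i}$, so that $\Orth(D)$ contains the product of the $\Orth(D_i)$ together with any isometries permuting isomorphic blocks; invariance then follows from the irreducible cases plus the (obvious) invariance under permutation of identical blocks, since such a permutation fixes the root set as a whole. Second, for each irreducible $R$ I would use the well-known fact that $\Aut(L)\supseteq W(R)\rtimes \Aut(\text{Dynkin diagram})$ — indeed for the lattices in the table $\Orth(L)$ is generated by the Weyl group $W(R^\vee)=W(R)$, $-1$, and diagram automorphisms — and that all of these preserve $R$ (hence $R^+$ up to the $W$-action and sign) and therefore preserve $\{\pm r/h\}$ setwise. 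Third, I would check that the natural map $\Orth(L_{\text{ev}})\to\Orth(D(L_{\text{ev}}))$ is surjective for each lattice in the table (this is where one may invoke that each genus is a single class, or verify it directly on the small explicit Gram matrices $L_4,S_6,L_6$ and on the classical root lattices $A_4,3A_2,D_8$, etc.), so that every element of $\Orth(D)$ is induced by an isometry of $L_{\text{ev}}$ of the above type. Fourth, I would compute the sign: $-1\in W(R)$ or $-1\in\Orth(L)$ acts on $\vartheta_R$ by $(-1)^N$, which for our systems (with $n+2N=24$, so $N=(24-n)/2$) is $(-1)^N$; for the diagram automorphisms and block permutations one counts the roots flipped. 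In every case this sign is a homomorphism $\Orth(D)\to\{\pm1\}$, hence a character of order $1$ or $2$, which is exactly the claim.

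\textbf{Main obstacle.} The delicate point is step three combined with step four: one must be sure that $\Orth(D(L_{\text{ev}}))$ is \emph{exactly} covered by the isometries of $L_{\text{ev}}$ that visibly permute $\{\pm r/h\}$, with no extra "exotic" automorphisms of the discriminant form that fail to lift or that lift only to isometries not respecting the root structure — and that the induced sign is genuinely multiplicative rather than just defined on generators. For the square-free-level lattices $A_4^\vee(5)$, $3A_2$, $D_8$, $L_4$, $L_6$ this is transparent; the awkward cases are $R=A_1\oplus C_3$, $B_2\oplus G_2$ and $3A_1\oplus A_3$, where $D$ has $2$-adic and $4$-adic parts ($2_3^{-1}4_1^{+1}8_{\II}^{-2}$, etc.) and $\Orth(D)$ is larger and less obviously realized; here I would argue case by case, using the explicit element-order/norm data of Remark~\ref{rmk:Roots} to pin down which cosets the roots $r/h$ occupy, checking that $\Orth(D)$ permutes those cosets consistently with a lift to $\Orth(L_{\text{ev}})$, and finally reading off the order-$2$ sign. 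I expect this bookkeeping — rather than any conceptual difficulty — to be the bulk of the proof, and it is naturally organized as a finite table indexed by the eight root systems.
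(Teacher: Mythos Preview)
Your approach is genuinely different from the paper's, and while it is plausible in spirit, the argument as written has a real gap.

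\textbf{What the paper does.} The paper does not lift elements of $\Orth(D)$ to isometries of $L_{\text{ev}}$ at all. Instead it computes the space of holomorphic weight-$0$ modular forms for $\rho_D$ (the invariants of $\rho_D$) using \cite[Algorithm~4.2]{ES}. For five of the eight root systems ($A_4$, $A_1\oplus C_3$, $B_2\oplus G_2$, $3A_2$, $3A_1\oplus A_3$) this space is one-dimensional, so $\theta_R$ spans it and is automatically an $\Orth(D)$-eigenvector; integrality of a basis forces the character to have order $\le 2$. In the remaining three cases ($A_1\oplus B_3$, $2A_1\oplus A_2\oplus B_2$, $8A_1$) the discriminant form factors as $2_{\II}^{+2}\oplus D'$ and the invariant space is two-dimensional; the paper writes down an explicit $\Orth(D)$-eigenvector $v$ with sign character and then uses the \emph{oddness of $L$} (via the transformation $\vartheta_R(\tau,\mathfrak z+x)=-\vartheta_R(\tau,\mathfrak z)$ for an odd-norm $x\in L$) to show $\theta_R$ lies on the line $\C v$. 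No lifting of $\Orth(D)$ to $\Orth(L_{\text{ev}})$, and no analysis of how isometries permute $\{\pm r/h\}$, is needed.

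\textbf{The gap in your argument.} Two problems compound each other. First, your reduction ``$L_{\text{ev}}=\bigoplus L_{\text{ev},i}$'' is false: for $R=8A_1$ you have $L=\Z^8$ but $L_{\text{ev}}=D_8$, which is indecomposable; the explicit Gram matrices for $L_4,S_6,L_6$ in the paper likewise mix components. This is not fatal by itself, since isometries of $L=\bigoplus L_i$ still restrict to isometries of $L_{\text{ev}}$, but it means your ``block'' picture lives in $\Orth(L)$, not in $\Orth(L_{\text{ev}})$. Second, and more seriously, the implication in your step three is a non-sequitur: surjectivity of $\Orth(L_{\text{ev}})\to\Orth(D)$ tells you every $\sigma\in\Orth(D)$ lifts to \emph{some} isometry of $L_{\text{ev}}$, not to one ``of the above type'' preserving $\{\pm r/h\}$. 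The root-preserving isometries you describe generate $\Orth(L)$, which in general sits properly inside $\Orth(L_{\text{ev}})$ (it is the stabiliser of the order-$2$ subgroup $L/L_{\text{ev}}\subset D$), and an element of $\Orth(L_{\text{ev}})\setminus\Orth(L)$ need not fix $L^\vee$, let alone the finite set $\{\pm r/h\}\subset L^\vee$. What you actually need is surjectivity of $\Orth(L)\to\Orth(D(L_{\text{ev}}))$, which is a different (and stronger) statement that you would have to check in each case; your ``bookkeeping'' has not touched this.

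Your route can probably be pushed through with that extra verification for each of the eight $R$, but it is more laborious than the paper's dimension count, and the dimension count has the added benefit of explaining uniformly \emph{why} the character has order at most $2$ (via the integral basis of McGraw).
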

\begin{proof}
    The space of holomorphic modular forms of weight $0$ for the Weil representation $\rho_D$ is the space of invariants of $\rho_D$ and can be computed using \cite[Algorithm 4.2]{ES}. If $R$ is one of $A_4, A_1\oplus C_3, B_2\oplus G_2, 3A_2$ and  $3A_1\oplus A_3$, then the dimension of this space is $1$. It follows that $\theta_R$ is invariant under $\Orth(D)$ up to a character. Since the space of holomorphic modular forms of fixed weight $k$ for $\rho_D$ has a basis consisting of modular forms with integer coefficients (see \cite[Theorem 5.6]{McGraw}), this character must have order at most $2$.
    
    In the other cases, the discriminant form $D$ can be decomposed as $D=D_2\oplus D'$, where $D_2=2_{\II}^{+2}$ and $D'=5^{+3},3^{-3}$ or $1$. The space of invariants of $\rho_D$ is the tensor product of the spaces of invariants of $\rho_{D_2}$ and $\rho_{D'}$. The first space has dimension $2$ and is spanned by $v_1=\textbf{e}_0+\textbf{e}_{\gamma_1}$ and $v_2=\textbf{e}_0+\textbf{e}_{\gamma_2}$, where $\gamma_1$ and $\gamma_2$ are generators of $D_2$ with $q(\gamma_1)=q(\gamma_2)=0\mod 1$. The space of invariants of $D'$ is $1$-dimensional for all of the three cases. Therefore, the space of invariants of $\rho_D$ is two dimensional. Let $v$ be the tensor product of $v_1-v_2$ and a generator of the space of invariants of $D'$. Then $v$ is invariant under the action of $\Orth(D)$ up to a character of order $2$. It therefore suffices to prove that $\theta_R$ is a multiple of $v$.
    
    For all of the three cases, the lattice $L$ is odd. There thus exists a vector $x\in L$ such that $(x,x)$ is odd. The transformation formula for Jacobi forms of lattice index yields $\vartheta_R(\tau,\mathfrak{z}+x)=-\vartheta_R(\tau,\mathfrak{z})$. Since $\vartheta_R$ is given by
    \[
    \vartheta_R(\tau,\mathfrak{z})=\sum_{\gamma\in D}(\theta_R)_{\gamma}\sum_{\ell\in \gamma+L_{\text{ev}}}q^{(\ell,\ell)/2}\zeta^\ell,
    \]
    this condition forces $(\theta_R)_{\gamma}=0$ unless $(\gamma,x)$ is in $1/2+\Z$. Therefore, $\gamma$ can not be an element of $D^2$, which implies that $\theta_R$ is a multiple of $v$.
\end{proof}

Before we can prove Theorem \ref{thm:mainThm}, we also need the following lemma.
\begin{lem}\label{lem:transitiveOnRoots}
    Let $\Psi_R$ be one of the strongly-reflective modular forms of Theorem \ref{thm:ReflectiveProducts} and let $v$ and $v'$ be two primitive vectors of $M^\vee$ such that $(v,v)=(v',v')$ and $v$ and $v'$ have the same order in $D(M)$. If $\Psi_R$ vanishes along both divisors $\cD_{v}$ and $\cD_{v'}$, then $v$ and $v'$ are conjugate under $\Orth^+(M)$.
\end{lem}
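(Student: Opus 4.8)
The statement is about transitivity of $\Orth^+(M)$ on primitive vectors $v \in M^\vee$ of a fixed norm and fixed order in $D(M)$, restricted to those lying in the divisor of $\Psi_R$. The plan is to reduce this to a statement purely about the discriminant form $D = D(M)$ and then verify it case by case for the eight root systems. First I would invoke Eichler's criterion (or the Eichler transvection argument used throughout the Borcherds-product literature, see e.g. Gritsenko--Hulek--Sankaran): since $M = 2U \oplus L_\text{ev}(-1)$ splits two hyperbolic planes, two primitive vectors $v, v' \in M^\vee$ of the same norm are conjugate under $\widetilde{\Orth}^+(M)$ if and only if they define the same element of $D(M)$ up to sign, i.e. $v + M = \pm(v' + M)$. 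Hence under the \emph{full} group $\Orth^+(M)$, the $v$ appearing in the divisor of $\Psi_R$ are classified by the $\Orth(D)$-orbit of $v + M \in D$ together with the norm $(v,v)$.

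\medskip

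So the key step is: for each $R$, determine the $\Orth(D)$-orbits on $D$ and check that within the set of elements $\gamma \in D$ which contribute to the divisor of $\Psi_R$ (these are exactly the $\gamma$ listed in Theorem~\ref{thm:ReflectiveProducts}, namely those with $\ord(\gamma) = d$ and $q(\gamma) \equiv 1/d \bmod 1$ for an appropriate divisor $d$ of $N$, together with the auxiliary families in the three non-square-free cases), any two elements of the same order and whose associated vectors have the same norm lie in a single $\Orth(D)$-orbit. Here one uses that $\Orth(D)$ acts transitively on elements of $D$ of a given order and given value of $q$ whenever $D$ is one of the small discriminant forms occurring in our table — this follows from the classification of automorphism groups of $p$-adic finite quadratic forms (Wall, Nikulin, or \cite{CS99}), since each local component $p^{\pm n}$ or $2^{\pm n}_t$ appearing here is of small rank. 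In the square-free-level cases the divisor-contributing $\gamma$ of a given order $d$ all automatically satisfy $q(\gamma) \equiv 1/d$, so "same order" already pins down the $\Orth(D)$-orbit and the norm condition is redundant; in the three non-square-free cases ($R = A_1 \oplus C_3$, $B_2 \oplus G_2$, $3A_1 \oplus A_3$) one has to be slightly more careful because elements of order $2$ or $4$ in $D$ can have two different values of $q$, but the norm $(v,v) = 2n - (\lambda,\lambda)$ distinguishes exactly those, so the combined data (order, norm) does determine the orbit. I would organize this as a short case-by-case check, possibly with the computations done as in \cite[Algorithm 4.2]{ES} or directly from the genus symbols.

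\medskip

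\textbf{Main obstacle.} The genuine content — and the step I expect to require the most care — is verifying, in the three cases of non-square-free level, that $\Orth(D)$ is large enough to be transitive on the relevant sets. For $D = 2_3^{-1} 4_1^{+1} 8_{\II}^{-2}$ and $D = 2_6^{+2} 4_{\II}^{-2} 3^{-3}$ the $2$-adic part is not simply a sum of copies of $2_{\II}^{+2}$, so one cannot just quote "the Weil representation has a one-dimensional space of invariants"; instead one must exhibit explicit orthogonal transformations (transvections on the $8^{\pm}$- or $4^{\pm}$-blocks, and gluing automorphisms relating the order-$2$ elements across components) realizing the needed identifications, or argue via the known automorphism group of each Jordan block and the fact that these blocks here are rank $\leq 2$. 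A clean way around this is to note that all the $\gamma$ contributing to the divisor of $\Psi_R$ can be taken of the form $\lambda = r/h$ with $r$ a root of $R$ (as in Remark~\ref{rmk:Roots}), and $\Orth(D)$ contains the image of the Weyl group action together with the extra symmetries coming from $W_N$-conjugation to $U \oplus U(N) \oplus \Lambda_g$; transitivity on roots of a fixed length in $R$ then gives transitivity on the corresponding $\gamma$'s of fixed order and norm, and one only needs to additionally check there are no \emph{further} divisor-contributing classes outside this Weyl-orbit, which is immediate from the explicit principal parts in Theorem~\ref{thm:ReflectiveProducts}. I would present the argument in this second form since it is shorter and makes the role of the root system transparent.
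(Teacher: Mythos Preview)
Your reduction via Eichler's criterion and the surjectivity $\Orth^+(M) \to \Orth(D)$ to a transitivity check for $\Orth(D)$ on the sets $R_a$ is correct and is exactly the paper's argument. For square-free level the paper invokes \cite[Proposition~5.1]{Scheithauer-ModForms}; for the three non-square-free cases it does the $2$-adic hand-checks you anticipate, working out $R = B_2 \oplus G_2$ in full. So through your ``Main obstacle'' paragraph the plan matches the paper.

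The gap is in your final paragraph. The Weyl-group shortcut does not work, because the image in $D$ of the roots of $R$ of a fixed length is in general a \emph{proper} subset of $R_a$; the claim that ``there are no further divisor-contributing classes outside this Weyl-orbit'' is precisely what fails, and it is not immediate from the principal parts. Concretely, take $R = B_2 \oplus G_2$ and $d=3$. The only roots of $R$ whose image in $D$ has order $3$ are the long roots of $B_2$, and their images all lie in the summand $D\bigl((L_1)_{\text{ev}}\bigr)$ of $D$ coming from the $B_2$-factor. But $R_{1/3}$ also contains, for instance, the generator of the $3$-part of $D(A_2(4))$, which sits entirely in the $G_2$-summand $D(L_2)$ and has $q = 1/3$. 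Since $W(R) = W(B_2) \times W(G_2)$ preserves the direct-sum decomposition of $D$, no Weyl element can carry a $B_2$-root image to this element. The ``extra symmetries from $W_N$-conjugation'' do not rescue this: the identification $\Orth^+(M) \cong \Orth^+(K)$ is an isomorphism of the same abstract group, not a source of additional automorphisms of $D$. You must therefore fall back on the direct $\Orth(D)$-computation, as in your first approach and the paper's proof.
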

\begin{proof}
    First suppose that $M$ has square-free level. This is the case for all $R$ except $R=A_1\oplus C_3, B_2\oplus G_2$ and $3A_1\oplus A_3$. The elements $v$ and $v'$ have the same norm and the same order in $D=D(M)$. By \cite[Proposition 5.1]{Scheithauer-ModForms} and the paragraph after \cite[Proposition 5.2]{Scheithauer-ModForms}, there exists an element $\sigma\in \Orth(D)$ such that $\sigma(v)=v'\mod M$. The projection from $\Orth(M)$ to $\Orth(D)$ is surjective by \cite[Theorem 1.14.2]{Nikulin}. The reflection at a norm $2$ element in one of the hyperbolic planes is an element of $\Orth(M)\setminus \Orth^+(M)$ and has trivial image in $\Orth(D)$. Therefore, the images of $\Orth(M)$ and of $\Orth^+(M)$ in $\Orth(D)$ are the same. We therefore find an element $\sigma'\in \Orth^+(M)$ such that $\sigma'(v)=v'\mod M$. The Eichler criterion (see e.g. \cite[Proposition 3.3]{GHS}) then yields that $v'$ is conjugate to $\sigma'(v)$ and hence also to $v$ under $\Orth^+(M)$.
    
    If the level of $M$ is not square-free, we can use the same argument, except that we cannot apply \cite[Proposition 5.1]{Scheithauer-ModForms} to show that there exists an element $\sigma\in \Orth(D)$ with $\sigma(v)=v'\mod M$. However, it is not difficult to prove this by hand for each of the three remaining cases. 
    
    As an example, we do the case $B_2\oplus G_2$. The lattice $M$ has genus $\II_{6,2}(2_6^{+2}4_{\II}^{-2}3^{-3})$. We can decompose $D=D_4\oplus D_3$. The discriminant form $D_4$ can be decomposed as $D_4=A\oplus B$, where $A\cong 2_6^{+2}$ is generated by elements $\gamma_1$ and $\gamma_2$ of order $2$ with $q(\gamma_1)=q(\gamma_2)=3/4\mod 1$ and $b(\gamma_1,\gamma_2)=0\mod 1$, and $B\cong 4_{\II}^{-2}$ is generated by elements $\delta_1$ and $\delta_2$ of order $4$ with $q(\delta_1)=q(\delta_2)=b(\delta_1,\delta_2)=1/4\mod 1$.  The modular form $\Psi_R$ is the Borcherds product corresponding to the vector-valued modular form 
    \[F=F_{\Gamma_0(12),\eta_{1^{-1}3^{-1}4^{-2}6^{-2}12^2},0}+F_{\Gamma_0(12),\eta_{1^{-4}4^16^{-2}12^1},D^6}.
    \]
    The Fourier expansion of $F$ was given in Theorem \ref{thm:ReflectiveProducts}.  For $a\in \{1/2,1/3,1/4,1/6,1/12\}$ we let $R_a=\{\gamma\in D : F_\gamma=q^{-a}+O(q^{1-a})\}$. We need to prove that $\Orth(D)$ is transitive on $R_a$. 
    
    For $a=1/2$ there is nothing to show because $R_a$ consists of a single element. 
    
    The discriminant form $D_3$ has prime level, so we can apply \cite[Proposition 5.1]{Scheithauer-ModForms} to prove the transitivity of $\Orth(D_3)$ and hence of $\Orth(D)$ on $R_a$ for $a=1/3$ and $a=1/6$. 
    
    That $\Orth(D)$ is transitive on $R_{1/4}$, i.e. on the set of elements $\gamma$ of order $4$ with $q(\gamma)=1/4\mod 1$, can be easily checked by hand. 
    
    Similarly, to prove that $\Orth(D)$ is transitive on $R_{1/12}$, we note that $R_{1/12}$ consists of all elements of the form $\alpha+\beta$ with $\alpha\in D_3$ and $\beta\in D_4$ such that $q(\alpha)=1/3\mod 1$, while $\beta$ has order $4$ with $q(\beta)=3/4\mod 1$. As remarked above, $\Orth(D_3)$ is transitive on the set of all such $\alpha$ and the transitivity of $\Orth(D_4)$ on all such $\beta$ can again be easily checked by hand.
\end{proof}
With the help of the fact that $G(\vartheta_R)$ is modular for the full group $\Orth^+(M)$, we can prove that the divisor of the Borcherds product $\Psi_R$ is contained in the divisor of $G(\vartheta_R)$.

\begin{prp}\label{prp:DivisorsContained}
    For all root systems $R$ from Proposition \ref{prop:rootSystems}, the divisor of $\Psi_R$ is contained in the divisor of $G(\vartheta_R)$.
\end{prp}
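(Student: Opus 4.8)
The plan is to compare divisors directly. By Theorem \ref{thm:ReflectiveProducts}, the divisor of $\Psi_R$ is a union of rational quadratic divisors $\cD_v(M)$, each of multiplicity $1$, where $v$ ranges over primitive vectors of $M^\vee$ whose reduction $\gamma=v\mod M$ appears in the principal part of $F$ with a term $q^{-1/d}$; such a $v$ satisfies $(v,v)=-2/d$ (after the sign conventions in Theorem \ref{Borcherds13.3}, since $D=D(L_{\text{ev}}(-1))$) and $\gamma$ has order $d$ with $q(\gamma)\equiv 1/d\bmod 1$ (with the extra special cases of order-$2$ and order-$4$ classes when $N$ is not square-free, listed explicitly in the theorem). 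So it suffices to show that $G(\vartheta_R)$ vanishes along every such $\cD_v(M)$.

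The key observation connecting the two sides is Remark \ref{rmk:Roots}: for each positive root $r\in R^+$, the vector $\lambda=r/h\in L^\vee$ has norm $\langle\lambda,\lambda\rangle=(r,r)/h$ and $\vartheta_R$ vanishes on $\{(\tau,\mathfrak{z}) : \langle\lambda,\mathfrak{z}\rangle\in\Z\tau+\Z\}$, because a factor $\vartheta(\tau,\langle r/h,\mathfrak{z}\rangle)$ vanishes there. By Remark \ref{rmk:ZerosOfGritsenko}, since $\vartheta_R$ has $q$-order $1$ and hence vanishing constant term $f(0,0)=0$, the Gritsenko lift $G(\vartheta_R)$ vanishes on $\cD_v(M)$ for every $v\in M^\vee$ of the form $v=(0,0,\lambda,n,0)$ with $n\in\Z$, $\lambda=r/h$. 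I would first record, using the norms $\langle\lambda,\lambda\rangle$ and orders $\ord(\lambda)$ in $L_{\text{ev}}^\vee/L_{\text{ev}}$ tabulated in Remark \ref{rmk:Roots} (extended to the reducible cases by the direct-sum description), that for each root system $R$ the images $r/h$ of the positive roots, together with their scalar multiples, realize vectors $v$ with exactly the norms $-2/d$ and discriminant-group orders $d$ occurring in the principal part of $F$ in Theorem \ref{thm:ReflectiveProducts}. In other words, for each irreducible component of the divisor of $\Psi_R$ there is at least one root $r$ with $v_r=(0,0,r/h,n,0)$ lying on that component.

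The final step is a transitivity argument: having produced one vector $v_r$ on a given component $\cD_v(M)$ of $\mathrm{div}(\Psi_R)$, I invoke Lemma \ref{lem:transitiveOnRoots}. Since $v_r$ and the generic $v$ defining that component are both primitive in $M^\vee$, have the same norm, and have the same order in $D(M)$ (because both belong to a class appearing in the principal part of $F$ with the same exponent), Lemma \ref{lem:transitiveOnRoots} shows they are conjugate under $\Orth^+(M)$. By Lemma \ref{lem:ModularFullGroup}, $G(\vartheta_R)$ is modular for the full group $\Orth^+(M)$ (up to a character, which does not affect vanishing loci), so its divisor is $\Orth^+(M)$-invariant; hence $G(\vartheta_R)$ vanishes on $\cD_v(M)$ as well. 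Running this over all components yields $\mathrm{div}(\Psi_R)\subseteq\mathrm{div}(G(\vartheta_R))$, with the multiplicity comparison being automatic since $\Psi_R$ is strongly-reflective and each component enters $\mathrm{div}(\Psi_R)$ with multiplicity $1$.

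I expect the main obstacle to be the bookkeeping in the middle step: one must verify case by case (eight root systems, several with reducible type and several with non-square-free level) that every class $\gamma$ contributing to the principal part of $F$ — including the exceptional order-$2$ and order-$4$ classes $x_2,x_4$ in the $A_1\oplus C_3$, $B_2\oplus G_2$, $3A_1\oplus A_3$ cases — is hit by some $r/h$ (or a multiple thereof) coming from a root, so that the corresponding factor of $\vartheta_R$ really does vanish on that divisor. This is where the explicit tables of norms and orders in Remark \ref{rmk:Roots}, combined with the Fourier expansions in Theorem \ref{thm:ReflectiveProducts}, must be matched up; once the matching is in place, Lemma \ref{lem:transitiveOnRoots} and Lemma \ref{lem:ModularFullGroup} close the argument uniformly.
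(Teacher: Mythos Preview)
Your overall strategy matches the paper's almost exactly: reduce to exhibiting one vector per $\Orth^+(M)$-orbit of divisor components via Lemma~\ref{lem:transitiveOnRoots}, produce such vectors from the roots $r$ via $\lambda=r/h$ and Remarks~\ref{rmk:Roots} and~\ref{rmk:ZerosOfGritsenko}, and then propagate using Lemma~\ref{lem:ModularFullGroup}. For the square-free level cases this works just as you describe, and the paper does nothing more.

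The gap is precisely where you flag the ``main obstacle'': your claim that every class in the principal part of $F$ is hit by some $r/h$ (or a multiple) fails for the order-$2$ component $x_2$ when $R=B_2\oplus G_2$. From the table in Remark~\ref{rmk:Roots}, the norms $\langle\lambda,\lambda\rangle$ of $\lambda=r/h$ for roots of $B_2$ and $G_2$ are $1/3,\,2/3,\,1/6,\,1/2$, none equal to $1$; and no integer multiple $m\lambda$ helps, since (i) multiples with order $2$ have norm $3,\,2,\,6$ rather than $1$, and (ii) even if one did, $\vartheta_R$ does not vanish along the divisor $\langle m\lambda,\mathfrak z\rangle\in\Z\tau+\Z$ (the theta factor only vanishes on the smaller set $\langle\lambda,\mathfrak z\rangle\in\Z\tau+\Z$), so Remark~\ref{rmk:ZerosOfGritsenko} is not applicable. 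Consequently no vector of the shape $v=(0,0,\lambda,n,0)$ with $(v,v)=-1$ and image $x_2$ arises from a root, and your scheme cannot produce the required zero of $G(\vartheta_R)$ on that component.

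The paper handles this case by leaving the locus $(0,0,\ast,\ast,0)$: it takes $v=(0,1,r,1,0)$ with $r$ a short root of $B_2$ (so $\langle r,r\rangle=3$, $(v,v)=-1$, and the image of $v$ in $D$ is $x_2$), and then proves in a separate lemma that the reflection $\sigma$ in $v^\perp$ satisfies $\sigma(\theta_R)=-\theta_R$ on the Weil-representation side, whence $\sigma(G(\vartheta_R))=-G(\vartheta_R)$ and $G(\vartheta_R)$ vanishes on $\cD_v$. This reflection argument, not a theta-factor vanishing, is the missing ingredient; for $R=A_1\oplus C_3$ and $3A_1\oplus A_3$ the $A_1$ summand supplies a root with $\langle\lambda,\lambda\rangle=1$ and order $2$, so there the root-based argument does suffice, but for $B_2\oplus G_2$ it does not.
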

\begin{proof}
        Let $N$ be the level of $M$. From Theorem \ref{thm:ReflectiveProducts}, we know that the only possible zeros of $\Psi_R$ are simple zeros along the divisor $\cD_v$ for primitive $v\in M^\vee$ of norm $(v,v)=-2/d$ and order $d$ in $D=D(M)$ for divisors $d>1$ of $N$. Moreover, $\Psi_R$ has a simple zero at such a divisor $\cD_v$ if and only if the image of $v$ in $D$ is contained in the set $R_{1/d}$ defined in the proof of Lemma \ref{lem:transitiveOnRoots}. In view of Lemmas \ref{lem:ModularFullGroup} and \ref{lem:transitiveOnRoots} it suffices to prove that for each divisor $d>1$ of $N$ there is a primitive vector $v\in M^\vee$ of norm $(v,v)=-2/d$ whose image in $D(M)$ is contained in $R_{1/d}$ and such that $G(\vartheta_R)$ vanishes on $\cD_v$.
        
    First suppose $R$ is one of the root systems for which $L_{\text{ev}}$ has square-free level $N$ (as mentioned before, these are all cases, except $R=A_1\oplus C_3, B_2 \oplus G_2$ and $3A_1\oplus A_3$). In these cases, $R_{1/d}$ consists of all elements $\gamma\in D$ with $\ord(\gamma)=d$ and $q(\gamma)=1/d\mod 1$. Using Remark \ref{rmk:Roots}, it is not difficult to see that we can find a root $r\in R^+$ such that $\lambda=r/h\in  L_{\text{ev}}^\vee$ (if $R$ is the direct sum of irreducible root systems $R_i$, then $r\in R_i$ for a unique $i$ and we define $h=h_i$) has order $d$ in $L_{\text{ev}}^\vee/L_{\text{ev}}$ and satisfies $\langle\lambda,\lambda\rangle=2/d$. The function $\vartheta_R(\tau,\mathfrak{z})$ vanishes along the divisor
    \[
    \{(\tau,\mathfrak{z})\in \HH\times (\C\otimes L_{\text{ev}}) : \langle\lambda,\mathfrak{z}\rangle\in \Z\tau+\Z\}.
    \]
    By Remark \ref{rmk:ZerosOfGritsenko}, $G(\vartheta_R)$ then vanishes along the divisor $\cD_{v}$, where $v=(0,0,\lambda,1,0)\in M^\vee$. Note that $v$ is a primitive vector in $M^\vee$ of norm $(v,v)=-2/d$ and order $d$ in $D$. This completes the proof for these cases.
    
    We now prove the statement for the case of $R=B_2\oplus G_2$. For $d=3,4$ and $12$, the image of every primitive $v\in M^\vee$ of norm $(v,v)=-2/d$ and order $d$ in $D$ is in $R_{1/d}$. For these $d$ the proof can be completed as in the case of square-free level. 
    
    We next look at the case $d=6$. We write $L=L_1\oplus L_2$, where $L_1=\Lambda(B_2^\vee)(h_1)$ and $L_2=\Lambda(G_2^\vee)(h_2)$, where $h_1=\frac{1}{2}\sum_{r\in B_2^+}(r,r)$ and $h_2=\frac{1}{2}\sum_{r\in G_2^+}(r,r)$. Since $L_2$ is already even, we have $L_{\text{ev}}={L_1}_{\text{ev}}\oplus L_2$ and $L_{\text{ev}}^\vee/L_{\text{ev}}={L_1}_{\text{ev}}^\vee/{L_1}_{\text{ev}}\oplus L_2^\vee/L_2$. The discriminant form $L_2^\vee/L_2$ is isomorphic to $4_{\II}^{-2}3^{-1}$ and ${L_1}_{\text{ev}}^\vee/{L_1}_{\text{ev}}$ is isomorphic to $2_{6}^{+2}3^{+2}$. Let $r$ be a short root of $B_2$ and $\lambda=r/h_1$. As before, we see that $G(\vartheta_R)$ vanishes along the divisor $\cD_v$ for $v=(0,0,\lambda,1,0)\in M^\vee$. But the image of $v$ in $D$ is equal to the image of $\lambda$, which lies in $2_{6}^{+2}3^{+2}$. From the singular part of $F$ given in Theorem \ref{thm:ReflectiveProducts}, we see that every element $\gamma\in 2_{6}^{+2}3^{+2}$  of order $6$ with $q(\gamma)=1/6\mod 1$ is in $R_{1/6}$. In particular, the image of $v$ in $D$ is in $R_{1/6}$. This completes the proof for $d=6$. 
    
    The case $d=2$ is more complicated. To prove this case we let $r$ be as above, i.e. a short root of $B_2$. Then $\langle r, r\rangle=3$ and $r$ has order $2$ in ${L_1}_{\text{ev}}^\vee/{L_1}_{\text{ev}}$. Let $v=(0,1,r,1,0)\in M^\vee$. Then $(v,v)=-1$ and $v$ and $r$ have the same image in $D$, which is the unique element in $R_{1/2}$. We need to show that $G(\vartheta_R)$ vanishes along $\cD_v$. This is proved in the next lemma.
    
    The arguments for the cases $R=A_1\oplus C_3$ and $R=3A_1\oplus A_3$ are similar to the ones with $d\neq 2$ for the case $R=B_2\oplus G_2$.   
\end{proof}

\begin{lem}
    Let $R=B_2\oplus G_2$ and let $r$ be a short root of $B_2$. Let $v=(0,1,r,1,0)\in M^\vee$ and let $\sigma\in \Orth^+(M)$ be the reflection along $v^\perp$. Then $\sigma(G(\vartheta_R))=-G(\vartheta_R)$. In particular, $G(\vartheta_R)$ vanishes along $\cD_v$.
\end{lem}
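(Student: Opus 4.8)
The idea is to show that $\sigma$ multiplies $G(\vartheta_R)$ by $-1$; since $\sigma$ fixes the divisor $\cD_v$ pointwise, an anti-invariant function must vanish along it. First I would record the basic facts about $v=(0,1,r,1,0)$: it is primitive in $M^\vee$ (it pairs to $1$ with $f_1$), $(v,v)=2-\langle r,r\rangle=-1$, and $2v\in M$, so $\sigma(x)=x-2(x,v)v/(v,v)=x+2(x,v)v$ preserves $M$ (for $x\in M$ one has $(x,v)\in\Z$ and $2v\in M$) and, $v$ having negative norm, lies in $\Orth^+(M)$. Its image $\bar v$ in $D(M)\cong D(L_{\text{ev}})$ equals the image $\bar r$ of $r$, of order $2$, so on $D:=D(L_{\text{ev}})$ the automorphism $\sigma$ acts by the reflection $\bar\sigma$ at $\bar r$, which sends $\gamma$ to $\gamma$ if $b(\gamma,\bar r)\in\Z$ and to $\gamma+\bar r$ otherwise. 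It then suffices to prove that the weight-$0$ modular form $\theta_R$ for $\rho_D$ attached to $\vartheta_R$ by \eqref{eq:Correspondence} satisfies $\bar\sigma(\theta_R)=-\theta_R$: by the equivariance of the additive Borcherds lift recalled in the proof of Lemma \ref{lem:ModularFullGroup}, $\sigma(G(\vartheta_R))$ is the additive lift of $\bar\sigma(\theta_R)=-\theta_R$, hence equals $-G(\vartheta_R)$.

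To prove $\bar\sigma(\theta_R)=-\theta_R$, note that a short root $r$ of $B_2$ lies in the root lattice of $B_2$, hence in $\Lambda(B_2^\vee)$, so $r$ is an honest vector of the index lattice $L=L_1\oplus L_2$ with $\langle r,r\rangle=3$. Since $\vartheta_R\in J_{n/2,L}$, the elliptic transformation law with $x=r$, $y=0$ gives
\[
\vartheta_R(\tau,\mathfrak{z}+r\tau)=e(\langle r,r\rangle/2)\,e\bigl(-\tfrac{1}{2}\langle r,r\rangle\tau-\langle r,\mathfrak{z}\rangle\bigr)\,\vartheta_R(\tau,\mathfrak{z})=-\,q^{-3/2}\,e(-\langle r,\mathfrak{z}\rangle)\,\vartheta_R(\tau,\mathfrak{z}).
\]
On the other hand, writing $\vartheta_R=\sum_{\gamma\in D}(\theta_R)_\gamma\,\Theta_\gamma^{L_{\text{ev}}}$ and completing the square shows $\Theta_\gamma^{L_{\text{ev}}}(\tau,\mathfrak{z}+r\tau)=q^{-\langle r,r\rangle/2}\,e(-\langle r,\mathfrak{z}\rangle)\,\Theta_{\gamma+\bar r}^{L_{\text{ev}}}(\tau,\mathfrak{z})$. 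Comparing coefficients of the linearly independent theta functions $\Theta_\gamma^{L_{\text{ev}}}$ yields $(\theta_R)_{\gamma+\bar r}=-(\theta_R)_\gamma$ for every $\gamma\in D$. Since $q(\gamma+\bar r)\equiv q(\gamma)+\tfrac{1}{2}+b(\gamma,\bar r)\pmod 1$, the Fourier expansions of $(\theta_R)_\gamma$ and $(\theta_R)_{\gamma+\bar r}$ are supported on disjoint cosets of $\Z$ whenever $b(\gamma,\bar r)\in\Z$, forcing $(\theta_R)_\gamma=0$ for such $\gamma$. Feeding these two facts into the description of $\bar\sigma$ gives $(\bar\sigma(\theta_R))_\gamma=(\theta_R)_{\bar\sigma(\gamma)}=-(\theta_R)_\gamma$ in both cases, i.e. $\bar\sigma(\theta_R)=-\theta_R$.

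Finally, the fixed locus of $\sigma$ on $\cD(M)$ is exactly $\cD_v$: if $\sigma[\mathcal{Z}]=[\mathcal{Z}]$ then $2(\mathcal{Z},v)v\in\C\mathcal{Z}$, which forces $(\mathcal{Z},v)=0$ because $(v,v)\neq 0=(\mathcal{Z},\mathcal{Z})$. Combined with $\sigma(G(\vartheta_R))=-G(\vartheta_R)$, this gives $G(\vartheta_R)|_{\cD_v}=0$. The step I expect to require the most care is the passage from the periodicity of $\vartheta_R$ under $r\in L$ — a vector of the index lattice $L$ but \emph{not} of the even sublattice $L_{\text{ev}}$ used for the theta decomposition — to the relation $(\theta_R)_{\gamma+\bar r}=-(\theta_R)_\gamma$ on $D(L_{\text{ev}})$; choosing $x=r$, $y=0$ (shifting $\mathfrak{z}$ by $r\tau$ rather than by $r$) is precisely what makes both the vanishing of half of the components and the sign visible simultaneously.
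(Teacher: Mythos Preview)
Your proof is correct and follows a genuinely different route from the paper's. Both arguments reduce to showing $\bar\sigma(\theta_R)=-\theta_R$ via the equivariance of the additive lift, but the paper then appeals to the fact that the space of weight-$0$ invariants for $\rho_{D}$ is one-dimensional (a tensor product of one-dimensional pieces for $D_3$ and $D_4$), explicitly computes a generator $G$ of the $D_4$-piece using the algorithm of Ehlen--Skoruppa, and verifies $\sigma(G)=-G$ by inspection of the twelve nonzero components. You instead extract the anti-invariance directly from the elliptic transformation law of $\vartheta_R$ under the shift $\mathfrak{z}\mapsto\mathfrak{z}+r\tau$ with $r\in L\setminus L_{\text{ev}}$ of odd norm: this produces the relation $(\theta_R)_{\gamma+\bar r}=-(\theta_R)_\gamma$ in one stroke, and a support comparison (using $q(\bar r)\equiv 1/2$) forces $(\theta_R)_\gamma=0$ when $b(\gamma,\bar r)\in\Z$. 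Combined with the explicit description of $\bar\sigma$ as the identity on $\bar r^\perp$ and translation by $\bar r$ off it, this yields $\bar\sigma(\theta_R)=-\theta_R$ without any case-by-case computation.

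Your approach is more conceptual and computation-free; it is essentially a sharpening of the odd-lattice argument already used in the proof of Lemma~\ref{lem:Weilinvariants} (which only uses the shift by $r$ rather than $r\tau$ and so sees the vanishing but not the sign). The paper's approach, by contrast, makes the structure of the Weil-invariant space completely explicit, which is informative in its own right but relies on an external algorithm. Both are valid; yours would transplant more easily to other root systems with odd index lattice.
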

\begin{proof}
    Let $D=D(L_{\text{ev}})$ and let $\theta_R$ be the modular form of weight $0$ for $\rho_D$  corresponding to $\vartheta_R$ under \eqref{eq:Correspondence}. Then $G(\vartheta_R)$ is the additive lift of $\theta_R$. Moreover, $\sigma(G(\vartheta_R))$ is equal to the additive lift of $\sigma(\theta_R)$. It therefore suffices to prove that $\sigma(\theta_R)=-\theta_R$, where the action of $\sigma$ on $\theta_R$ is given by its action on $D$. The space of modular forms of weight $0$ for $\rho_D$ is the tensor product of the spaces of modular forms of weight $0$ for $\rho_{D_4}$ and $\rho_{D_3}$, which both have dimension one. Recall that $D_4=A\oplus B$, where $A\cong 2_6^{+2}$ is generated by two elements $\gamma_1$ and $\gamma_2$ of order $2$ with $q(\gamma_1)=q(\gamma_2)=3/4\mod 1$ and $b(\gamma_1,\gamma_2)=0\mod 1$,  and $B\cong 4_{\II}^{-2}$ is generated by elements $\delta_1$ and $\delta_2$ of order $4$ with $q(\delta_1)=q(\delta_2)=b(\delta_1,\delta_2)=1/4\mod 1$. The image of $v$ in $D$ is $\gamma_1+\gamma_2$. It follows that $\sigma$ acts trivially on $D_3$ and on $B$ and it permutes $\gamma_1$ and $\gamma_2$. Using \cite[Algorithm 4.2]{ES}, we can compute a generator $G=\sum_{\gamma\in D_2}G_\gamma \textbf{e}_\gamma$ of the space of modular forms of weight $0$ for $\rho_{D_4}$. We obtain
    \[
    G_\gamma=\begin{cases}
    1 & \text{ if } \gamma\in \{\gamma_1+\delta_1,\gamma_1-\delta_2,\gamma_1-\delta_1+\delta_2,\gamma_2-\delta_1,\gamma_2+\delta_2,\gamma_2+\delta_1-\delta_2\}, \\
    -1 & \text{ if } \gamma\in \{\gamma_1-\delta_1,\gamma_1+\delta_2,\gamma_1+\delta_1-\delta_2,\gamma_2+\delta_1,\gamma_2-\delta_2,\gamma_2-\delta_1+\delta_2\},\\
    0 & \text{ otherwise.}
    \end{cases}
    \] 
    We see that $\sigma(G)=-G$. Since $\theta_R$ is a multiple of the tensor product of $G$ and a modular form of weight $0$ for $\rho_{D_3}$ (which is invariant under $\sigma$), we obtain $\sigma(\theta_R)=-\theta_R$.
\end{proof}
We can now complete the proof of our main theorem.
\begin{proof}[Proof of Theorem \ref{thm:mainThm}]
By Proposition \ref{prp:DivisorsContained}, the divisor of $\Psi_R$ is contained in the divisor of $G(\vartheta_R)$. Therefore, the quotient of $G(\vartheta_R)$ by $\Psi_R$ is a holomorphic modular form of weight $0$ and therefore constant. Comparing the first Fourier-Jacobi coefficient, we see that $G(\vartheta_R)
=B(\varphi)$ for some $\varphi$. By Remark \ref{rmk:ShapeOfBorcherds} the Jacobi form $\varphi$ must be equal to $-\vartheta_R|T_-(2)/\vartheta_R$. This completes the proof.
\end{proof}
\begin{cor}\label{cor:MainCorollary}
 The theta block conjecture is true for the pure theta blocks from Table \ref{table}.
\end{cor}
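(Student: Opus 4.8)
The plan is to deduce Corollary \ref{cor:MainCorollary} directly from Theorem \ref{thm:mainThm} by transporting the identity $G(\vartheta_R)=B(-\vartheta_R|T_-(2)/\vartheta_R)$ through the specialization map $s_x^*$ attached to a suitable vector $x\in\underline{R}$. First I would recall from Section \ref{section:JacobiForms} that for every non-zero $x=(x_f)_{f\in F}\in\underline{R}$ with all $x_f\neq 0$, the specialization $\Theta_x=s_x^*\vartheta_R$ is precisely a pure theta block of the form listed in Table \ref{table}, of weight $n/2$ and index $Q(x)$, and that $\Theta_x\not\equiv 0$ exactly under this non-vanishing condition on the coordinates; for a generic such $x$ the index $Q(x)$ will be a positive integer, which is what the theta block conjecture requires. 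So it suffices to show that applying $s_x^*$ to both sides of the identity in Theorem \ref{thm:mainThm} yields the theta block conjecture identity $G(\Theta_x)=B(\Psi_{\Theta_x})$ with $\Psi_{\Theta_x}=-(\Theta_x|T_-(2))/\Theta_x$.

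The key steps are then the following. First, the Hecke operator $T_-(2)$ and the embedding $s_x\colon K\to L$ commute in the obvious way: from the formula $\varphi|T_-(m)(\tau,\mathfrak{z})=m^{-1}\sum_{ad=m,0\le b<d}a^k\varphi((a\tau+b)/d,a\mathfrak{z})$ one reads off $s_x^*(\vartheta_R|T_-(2))=(s_x^*\vartheta_R)|T_-(2)=\Theta_x|T_-(2)$, since $s_x$ is linear and $a\mathfrak{z}$ is sent to $a\,s_x(w)$. Hence $s_x^*\bigl(-\vartheta_R|T_-(2)/\vartheta_R\bigr)=-\Theta_x|T_-(2)/\Theta_x$ as nearly holomorphic Jacobi forms of weight $0$ and index $Q(x)$. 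Second, I would invoke the compatibility of the Gritsenko lift and of the Borcherds lift with the pullback along the rational embedding of orthogonal Grassmannians induced by $s_x$: the embedding $K\hookrightarrow L$ (equivalently $\Z\hookrightarrow\underline R$) induces an embedding of lattices $2U\oplus K(-1)\hookrightarrow 2U\oplus L(-1)$ and hence of the associated symmetric domains, and both $G$ and $B$ are functorial under such embeddings. Concretely, restricting the Fourier--Jacobi expansions $G(\varphi)=\sum_{m\ge 1}\varphi|T_-(m)\,e(m\omega)$ and the product expansion \eqref{eq:JacobiLift} of $B(\varphi)$ from $\HH\times(\C\otimes L)\times\HH$ to $\HH\times(\C\otimes K)\times\HH$ via $\mathfrak{z}=s_x(w)$ gives exactly $G(s_x^*\varphi)$ and $B(s_x^*\varphi)$, because $s_x^*$ commutes with all $T_-(m)$ and the theta block $\Theta_{f(0,\ast)}(\tau,\mathfrak{z})$ pulls back to the theta block of $\Theta_x$. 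Applying this to Theorem \ref{thm:mainThm} yields $G(\Theta_x)=B(-\Theta_x|T_-(2)/\Theta_x)$, which is the assertion of the theta block conjecture for $\Theta_x$; letting $R$ range over the eight root systems of Proposition \ref{prop:rootSystems} and $x$ over admissible vectors produces exactly the eight infinite families of Table \ref{table}.

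The main obstacle, and the point that needs care rather than routine bookkeeping, is justifying that the restriction of a Borcherds product $B(\varphi)$ along the sub-Grassmannian cut out by $s_x$ is again the Borcherds product $B(s_x^*\varphi)$ — i.e. that $B$ commutes with pullback — and in particular that the restriction is not identically zero. Non-vanishing of $s_x^*G(\vartheta_R)=G(\Theta_x)$ is clear since $\Theta_x\not\equiv 0$ and the Gritsenko lift is injective, so the constancy argument of the proof of Theorem \ref{thm:mainThm} still applies: $G(\Theta_x)/B(-\Theta_x|T_-(2)/\Theta_x)$ is a holomorphic modular form of weight $0$, hence constant, and comparison of first Fourier--Jacobi coefficients (using Remark \ref{rmk:ShapeOfBorcherds} in the rank-$1$ index setting, i.e. the paramodular realization underlying the theta block conjecture) pins the constant to $1$. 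The remaining verification is that the index $Q(x)$ is a positive integer for the chosen $x$ and that the resulting theta block is the one displayed in Table \ref{table}; both are immediate from Theorem \ref{thm:RootThetaFunction} and the explicit positive-root data, and for each of the eight families one simply records the list of linear forms $\sum_f\gamma_{r,f}x_f$. This completes the deduction.
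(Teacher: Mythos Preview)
Your approach is correct and matches the paper's proof: both deduce the corollary from Theorem \ref{thm:mainThm} by pulling back along $s_x^*$ and using that $s_x^*$ commutes with $T_-(m)$, with the Gritsenko lift (via the Fourier--Jacobi expansion), and with the Borcherds lift (via the product expansion \eqref{eq:JacobiLift}). The paper adds only the minor observation that integrality of the index amounts to $x\in L_{\text{ev}}$, which you gloss as ``generic $x$''.

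One comment: your final ``constancy argument'' paragraph is superfluous and, as written, not self-contained. Once you have $s_x^*G(\vartheta_R)=G(\Theta_x)$ and $s_x^*B(\varphi_R)=B(s_x^*\varphi_R)$, the identity $G(\Theta_x)=B(-\Theta_x|T_-(2)/\Theta_x)$ follows immediately from Theorem \ref{thm:mainThm}; there is no need to re-run the quotient-is-constant argument. If instead you intend that paragraph as a fallback in case $B$ fails to commute with $s_x^*$, note that asserting $G(\Theta_x)/B(-\Theta_x|T_-(2)/\Theta_x)$ is \emph{holomorphic} presupposes the divisor containment of Proposition \ref{prp:DivisorsContained} in the paramodular setting, which you have not established independently. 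So either drop that paragraph or be explicit that it is redundant.
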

\begin{proof}
Let $x=(x_f)_{f\in F}\in \underline{R}$ be an integer vector with $x_f\neq 0$ for all $f$. Let $K$ be the lattice $\Z$ with bilinear form $(u,v)=muv$, where $m=(x,x)$. Recall that we defined $s_x\colon K\to \underline{R}$ by $s_x(u)=ux$ and
\[
s_x^*\colon J_{k,\underline{R}}\to J_{k,K}=J_{k,\frac{m}{2}}, \quad \varphi(\tau,\mathfrak{z})\mapsto \varphi(\tau,s_x(z)) \quad (z\in \C\otimes K).
\]

Each of the pure theta blocks from Table \ref{table} is of the form $s_x^*\vartheta_R$ for such a vector $x\in \underline{R}$.
Each of the pure theta blocks of integral index from Table \ref{table} is of the form $s_x^*\vartheta_R$ for some vector $x\in L_\text{ev}$. For the theta block conjecture, we only care about theta blocks of integral index. Let us assume that $x\in L_\text{ev}$ and $s_x^*\vartheta_R$ is not identically zero. 
We also denote by $s_x^*$ the pullback of a modular form $F$ on $\cD(2U\oplus L_\text{ev}(-1))^\bullet$ to $\cD(2U\oplus K(-1))^\bullet$. 
From the defintion of the Gritsenko lift and the linear action of $T_-(m)$ in the variable $\mathfrak{z}$, we see that $G(s_x^*\varphi)=s_x^*G(\varphi)$ for any $\varphi\in J_{k,L_\text{ev}}$. Similarly, we have $B(s_x^*\varphi)=s_x^*B(\varphi)$ for any $\varphi\in J_{0,L_\text{ev}}^!$ with integral singular coefficients, whenever $s_x^*\varphi\neq 0$.  Since $T_-(2)$ also commutes with $s_x^*$, this completes the proof.
\end{proof}

We end this paper with several remarks.

\begin{remark}
Like the cases $R=A_4, 3A_2, 8A_1$, when $R=3A_1\oplus A_3$, the associated lattice $L_\text{ev}=S_6$ also satisfies the following $\operatorname{norm}_2$ condition:
$$
\operatorname{norm}_2:\ \forall\, \bar{c} \in L^\vee/L \quad \exists\, h_c \in \bar{c} 
\quad \text{such that} \quad (h_c,h_c)\leq 2.
$$
Thus we can use the much simpler method in \cite{GW} to prove this case.
\end{remark}

\begin{remark}
It is easy to check directly that each $\vartheta_R$ appears as the first Fourier-Jacobi coefficient of the Borcherds product $\Psi_R$ constructed in Theorem \ref{thm:ReflectiveProducts}. Since $\Psi_R$ is holomorphic, its Fourier-Jacobi coefficients will be holomorphic Jacobi forms. This provides a new proof that $\vartheta_R$ is holomorphic at infinity (i.e. Theorem \ref{thm:RootThetaFunction}) for all root systems from Proposition \ref{prop:rootSystems}.
\end{remark}

\begin{remark}
When $R=A_1\oplus B_3$, $L_\text{ev}=L_4$ (see \S \ref{sec:5-thetablocks}). There are two different embeddings of $L_4$ into $A_4^\vee(5)$. The associated two pull-backs of $\vartheta_{A_4}$ from $A_4^\vee(5)$ to $L_4$ give two theta blocks which are Jacobi forms of weight 2 and index $L_4$. This gives a basis of $J_{2,L_4}$ because we know from the proof of Lemma \ref{lem:Weilinvariants} that $\dim J_{2, L_4}=2$. Their specializations are as follows.
\begin{align*}
\theta^{(1)}_{A_4}&=\eta^{-6}
\vartheta_{a}\vartheta_{b}\vartheta_{b+c}\vartheta_{b+2c+2d}
\vartheta_{a+b}\vartheta_{b+c+2d}\vartheta_{c}
\vartheta_{a-c}\vartheta_{c+2d}\vartheta_{a+b+c+2d},\\
\theta^{(2)}_{A_4}&=\eta^{-6}
\vartheta_{a-c-d}\vartheta_{b}\vartheta_{b+c}\vartheta_{b+2c+2d}
\vartheta_{a+b+c+d}\vartheta_{b+c+2d}\vartheta_{c}
\vartheta_{a+d}\vartheta_{c+2d}\vartheta_{a+b+d}.
\end{align*}
The same specialization of $\vartheta_{A_1\oplus B_3}$ gives
$$
\theta_{A_1\oplus B_3}=\eta^{-6}\vartheta_{2a+b+d}\vartheta_{b}\vartheta_{b+c}\vartheta_{b+2c+2d}
    \vartheta_{b+c+d}\vartheta_{b+c+2d}\vartheta_{c}
    \vartheta_{c+d}\vartheta_{c+2d}\vartheta_{d}
$$
and we have the two identities
\begin{align*}
\theta_{A_1\oplus B_3}&=\theta^{(1)}_{A_4}-\theta^{(2)}_{A_4},\\
B\left(-\frac{\theta_{A_1\oplus B_3}|T_{-}(2)}{\theta_{A_1\oplus B_3}}
\right)&=B\left(-\frac{\theta^{(1)}_{A_4}|T_{-}(2)}{\theta^{(1)}_{A_4}}
\right)-B\left(-\frac{\theta^{(2)}_{A_4}|T_{-}(2)}{\theta^{(2)}_{A_4}}
\right).
\end{align*}

We get similar results when we embed $L_6$ into $3A_2$.
\end{remark}

\begin{remark}
As an application, we can construct special orthogonal modular forms using our reflective Borcherds products $\Psi_R$ of singular weight. We discuss an interesting example in the case $R=2A_1\oplus A_2\oplus B_2$. In this case, the lattice $L_\text{ev}=L_6$ can be decomposed as a direct sum of $A_2$ and a lattice $T_4$ of rank $4$. The quasi pull-back of $\Psi_R$ from $\cD(2U\oplus L_6(-1))$ to $\cD(2U\oplus T_4(-1))$ gives a strongly-reflective cusp form of canonical weight $6$ (see \cite{Gritsenko-Reflective} for the details of quasi pull-backs). By \cite[Theorem 1.5]{Gritsenko-Reflective}, the corresponding modular variety has geometric genus 1 and Kodaira dimension 0.
\end{remark}

\bigskip

\noindent
\textbf{Acknowledgements} 
We thank Nils Scheithauer for helpful discussions on the content of this paper and for providing us his unpublished notes \cite{Scheithauer-Moonshine}. 
M. Dittmann acknowledges support by the LOEWE research unit "Uniformized
Structures in Algebra and Geometry" and by Deutsche
Forschungsgemeinschaft (DFG) through the Collaborative Research Centre
TRR 326 "Geometry and Arithmetic of Uniformized Structures", project
number 444845124.
H. Wang would like to thank Valery Gritsenko, Nils-Peter Skoruppa and Brandon Williams for many helpful discussions, and he is grateful to Max Planck Institute for Mathematics in Bonn for its hospitality and financial support where this work was done.  H. Wang was supported by the Institute for Basic Science (IBS-R003-D1). 

\vspace{3mm}

%On behalf of all authors, the corresponding author states that there is no conflict of interest.

%\vspace{3mm}

%Data sharing not applicable to this article as no datasets were generated or analysed during the current study.
\textbf{Data availability} This  paper has no associated data.

\bibliographystyle{amsalpha}
\bibliography{bibtex}

\providecommand{\bysame}{\leavevmode\hbox to3em{\hrulefill}\thinspace}
\providecommand{\MR}{\relax\ifhmode\unskip\space\fi MR }
% \MRhref is called by the amsart/book/proc definition of \MR.
\providecommand{\MRhref}[2]{%
  \href{http://www.ams.org/mathscinet-getitem?mr=#1}{#2}
}
\providecommand{\href}[2]{#2}
\begin{thebibliography}{GPY15}

\bibitem[Bor95]{Bor95}
Richard~E. Borcherds, \emph{{Automorphic forms on $O_{s+2,2}(R)$ and infinite
  products}}, Invent. Math. \textbf{120} (1995), 161 -- 213.

\bibitem[Bor98]{Bor98}
\bysame, \emph{{Automorphic forms with singularities on Grassmannians}},
  Invent. Math. \textbf{132} (1998), no.~3, 491--562.

\bibitem[CS99]{CS99}
J.~H. Conway and N.~J.~A. Sloane, \emph{Sphere packings, lattices and groups},
  third ed., Grundlehren der mathematischen Wissenschaften [Fundamental
  Principles of Mathematical Sciences], vol. 290, Springer-Verlag, New York,
  1999, With additional contributions by E. Bannai, R. E. Borcherds, J. Leech,
  S. P. Norton, A. M. Odlyzko, R. A. Parker, L. Queen and B. B. Venkov.

\bibitem[ES17]{ES}
Stephan Ehlen and Nils-Peter Skoruppa, \emph{{Computing Invariants of the Weil
  Representation}}, L-Functions and Automorphic Forms (Jan~Hendrik Bruinier and
  Winfried Kohnen, eds.), 2017, pp.~81 -- 96.

\bibitem[EZ85]{EZ85}
Martin Eichler and Don Zagier, \emph{{The theory of Jacobi forms}}, Progress in
  Mathematics, 55. Birkhäuser Boston, Inc., Boston, MA, 1985.

\bibitem[GHS09]{GHS}
Valery Gritsenko, Klaus Hulek, and Gregory~K. Sankaran, \emph{{Abelianisation
  of orthogonal groups and the fundamental group of modular varieties}}, J.
  Algebra \textbf{322} (2009), no.~2, 463 -- 478.

\bibitem[GPY15]{GPY}
Valery Gritsenko, Cris Poor, and David~S. Yuen, \emph{{Borcherds Products
  Everywhere}}, J. Number Theory \textbf{148} (2015), 164 -- 195.

\bibitem[Gri94]{Gri94}
Valery Gritsenko, \emph{{Modular forms and moduli spaces of Abelian and
  $\Kthree$ surfaces}}, Algebra i Analiz \textbf{6} (1994), no.~6, 65 -- 102,
  English translation in St. Petersburg Math. J. {\bf 6} (1995), no. 6, 1179
  -1208.

\bibitem[Gri18]{Gritsenko-Reflective}
\bysame, \emph{{Reflective modular forms and applications}}, Russian Math.
  Surveys \textbf{73} (2018), no.~5, 797 -- 864.

\bibitem[GSZ19]{GSZ}
Valery Gritsenko, Nils-Peter Skoruppa, and Don Zagier, \emph{{Theta Blocks}},
  2019, arXiv:1907.00188.

\bibitem[GW20]{GW}
Valery Gritsenko and Haowu Wang, \emph{{Theta block conjecture for paramodular
  forms of weight 2}}, Proc. Amer. Math. Soc. \textbf{148} (2020), 1863 --
  1878.

\bibitem[Hum72]{Humphreys}
James~E. Humphreys, \emph{{Introduction to Lie Algebras and Representation
  Theory}}, first ed., Springer-Verlag New York, 1972.

\bibitem[McG03]{McGraw}
William~J. McGraw, \emph{{The rationality of vector valued modular forms
  associated with the Weil representation}}, Math. Ann. \textbf{326} (2003),
  105--122.

\bibitem[Nik80]{Nikulin}
Viacheslav~V. Nikulin, \emph{{Integral symmetric bilinear forms and some of
  their applications}}, Mathematics of the USSR-Izvestiya \textbf{14} (1980),
  no.~1, 103--167.

\bibitem[Sch]{Scheithauer-Moonshine}
Nils~R. Scheithauer, \emph{{Moonshine for Conway's group}}, in preparation.

\bibitem[Sch04]{Scheithauer-KacMoody}
\bysame, \emph{{Generalized Kac–Moody algebras, automorphic forms and
  Conway's group I}}, Adv. Math. \textbf{183} (2004), no.~2, 240 -- 270.

\bibitem[Sch06]{Scheithauer-Class}
\bysame, \emph{{On the classification of automorphic products and generalized
  Kac-Moody algebras}}, Invent. Math. \textbf{164} (2006), no.~3, 641--678.

\bibitem[Sch09]{ScheithauerWeil}
\bysame, \emph{{The Weil representation of $\SL_2(\Z)$ and some of its
  applications}}, Int. Math. Res. Not. IMRN \textbf{8} (2009), 1488--1545.

\bibitem[Sch15]{Scheithauer-ModForms}
\bysame, \emph{{Some constructions of modular forms for the Weil representation
  of $\SL_2(\Z)$}}, Nagoya Mathematical Journal \textbf{220} (2015), 1--43.

\end{thebibliography}

\end{document}